\numberwithin{equation}{section}
 \newcommand{\comment}[1]{}  
\newcommand{\N}{\mathbb{N}}
\newcommand{\Z}{\mathbb{Z}}
\newcommand{\R}{\mathbb{R}}
\newcommand{\C}{\mathbb{C}}
\newcommand{\T}{\mathbb{T}}
\newcommand{\F}{\mathbb{F}}
\newcommand{\set}[1]{\mathopen{}\left\{#1\mathclose{}\right\}}
\newcommand{\abs}[1]{\mathopen{}\left| #1\mathclose{}\right|}
\newcommand{\bigabs}[1]{\bigl| #1 \bigr|}
\newcommand{\Bigabs}[1]{\Bigl| #1 \Bigr|}
\newcommand{\biggabs}[1]{\biggl| #1 \biggr|}
\newcommand{\sqbrac}[1]{\mathopen{}\left[ #1 \mathclose{}\right]}
\newcommand{\Bigsqbrac}[1]{\Bigl[ #1 \Bigr]}
\newcommand{\biggsqbrac}[1]{\biggl[ #1 \biggr]}
\newcommand{\ceil}[1]{\mathopen{}\left\lceil #1 \mathclose{}\right\rceil}
\newcommand{\floor}[1]{\mathopen{}\left\lfloor #1 \right\rfloor}
\newcommand{\brac}[1]{\mathopen{}\left( #1 \mathclose{}\right)}
\newcommand{\bigbrac}[1]{\bigl( #1 \bigr)}
\newcommand{\Bigbrac}[1]{\Bigl( #1 \Bigr)}
\newcommand{\biggbrac}[1]{\biggl( #1 \biggr)}
\newcommand{\norm}[1]{\mathopen{}\left\| #1\mathclose{}\right\|}
\newcommand{\bignorm}[1]{\big\| #1 \big\|}
\newcommand{\ang}[1]{\mathopen{}\left\langle#1\mathclose{}\right\rangle}
\newcommand{\bigang}[1]{\bigl\langle #1 \bigr\rangle}
\newcommand{\recip}[1]{\frac{1}{#1}}
\newcommand{\trecip}[1]{\tfrac{1}{#1}}
\newcommand{\vx}{\underline{x}}
\newcommand{\vh}{\underline{h}}
\newcommand{\vd}{\underline{d}}
\newcommand{\vv}{\underline{v}}
\newcommand{\E}{\mathbb{E}}
\renewcommand{\Re}{\operatorname{Re}}
\newcommand{\intd}{\mathrm{d}}
\newcommand{\supp}{\mathrm{supp}}
\newcommand{\eps}{\varepsilon}
\newcommand{\hash}{\#}
\let\@@pmod\pmod
\DeclareRobustCommand{\pmod}{\@ifstar\@pmods\@@pmod}
\def\@pmods#1{\mkern4mu({\operator@font mod}\mkern 6mu#1)}
\newtheorem{theorem}{Theorem}[section]
\newtheorem{corollary}[theorem]{Corollary}
\newtheorem{lemma}[theorem]{Lemma}
\theoremstyle{definition}
\newtheorem{remark}[theorem]{Remark}
\numberwithin{theorem}{section}
\renewcommand{\leq}{\leqslant}
\renewcommand{\geq}{\geqslant}
\begin{document}

\title
{Bounds in a popular multidimensional nonlinear Roth theorem}

\author{Sarah Peluse}
\address{Department of Mathematics\\
University of Michigan
}
\email{speluse@umich.edu}

\author{Sean Prendiville}
\address{School of Mathematical Sciences\\
Lancaster University
}
\email{s.prendiville@lancaster.ac.uk}

\author{Xuancheng Shao}
\address{Department of Mathematics\\
University of Kentucky
}
\email{xuancheng.shao@uky.edu}

\date{\today}

\begin{abstract}
A nonlinear version of Roth's theorem states that dense sets of integers contain configurations of the form $x$, $x+d$, $x+d^2$. We obtain a multidimensional version of this result, which can be regarded as a first step towards effectivising those cases of the multidimensional polynomial Szemer\'edi theorem involving polynomials with distinct degrees. In addition, we prove an effective ``popular'' version of this result, showing that every dense set has some non-zero $d$ such that the number of  configurations with difference parameter $d$ is almost optimal. Perhaps surprisingly, the quantitative dependence in this result is exponential, compared to the tower-type bounds encountered in the popular linear Roth theorem. 
\end{abstract}

\maketitle

\setcounter{tocdepth}{1}
\tableofcontents

\section{Introduction}\label{sec:intro}
A long-standing programme in additive combinatorics concerns  obtaining effective
versions of the multidimensional Szemer\'edi theorem \cite{FurstenbergKatznelsonErgodic}
and the polynomial Szemer\'edi theorem \cite{BergelsonLeibmanPolynomial}, as well as their
common generalisation (also found  in \cite{BergelsonLeibmanPolynomial}); see for instance
\cite{GowersNewFour,GowersEnduring}. In this paper, we take a first step towards effectivising those cases of the multidimensional polynomial Szemer\'edi theorem involving polynomials of distinct degrees.
\begin{theorem}[Density bound]\label{thm:density-bound}
If $A \subset \set{1,2,\dots,N}\times \set{1,2,\dots,N}$ does not contain a triple of the form 
\begin{equation}\label{eq:config}
(x,y),\ (x+d, y),\ (x,y+d^2)\quad \text{with}\quad d \neq 0,
\end{equation}
then
$$
|A| = O\brac{ {N^2}/{(\log N)^{c}}}.
$$
Here $c>0$ is an absolute constant\footnote{We expect that $c = 2^{-300}$ is permissible.}.
\end{theorem}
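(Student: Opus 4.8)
The plan is to follow the now-standard density-increment strategy adapted to this nonlinear configuration, using a combination of Fourier analysis in the ``linear'' $x$-variable and a degree-lowering argument to handle the genuinely quadratic $y$-direction. Write $f = 1_A$ and consider the counting operator
\[
T(f,f,f) = \E_{x,y,d} f(x,y) f(x+d,y) f(x,y+d^2),
\]
where the averages are over a suitable box of integers (after passing to a slightly larger modulus to avoid wraparound issues). If $A$ has density $\alpha$ and contains no nontrivial configuration, then $T(f,f,f)$ is essentially $\alpha^3$ coming only from the diagonal $d = 0$ contribution, so $T(f,f,f) - \E_d(\text{main term})$ must be correspondingly large in absolute value; the goal is to convert this into a density increment of $A$ on a subprogression (or a product of a progression with a Bohr-type set) whose size is polynomial in the original parameters, which iterated $O(\log(1/\alpha))$ times forces $\alpha \ll (\log N)^{-c}$.

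The key steps, in order: (i) a van der Corput / Cauchy--Schwarz manoeuvre in the $d$-variable to replace the $d^2$ shift by a bounded number of linear shifts, at the cost of introducing a PET-style induction on the complexity of the system; (ii) establishing an inverse theorem showing that if $T(f,f,f)$ deviates from its expected value then one of the functions correlates with a function of low ``true complexity'' in the appropriate direction --- concretely, that the relevant box norm or $U^s$-type norm of $f$ in the $x$-variable is large; (iii) a \emph{degree-lowering} step (in the spirit of the work on polynomial progressions) showing that control by a higher-degree Gowers norm can be bootstrapped down to control by the $U^2$, i.e.\ ordinary Fourier, norm --- this is what ultimately keeps the bounds at a single-exponential rather than tower-type level; (iv) translating $U^2$-largeness into a genuine density increment via a standard Weyl-sum / pigeonhole argument, localising $A$ onto a slightly smaller grid; and (v) iterating, tracking the quantitative loss at each stage to extract the final $(\log N)^{-c}$ bound with an explicit (if tiny) $c$.

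The main obstacle I expect is step (iii), the degree-lowering. A direct application of the inverse theorem for the Gowers $U^3$ norm would introduce polylogarithmic or worse losses and, crucially, would bring in nilsequences whose quantitative theory is not strong enough to yield single-exponential bounds; the whole point is to show that for \emph{this particular} configuration the quadratic structure in $y$ is ``spent'' in a way that leaves only linear Fourier obstructions in $x$. Making this rigorous requires a careful iteration scheme — correlating $f$ with a product of a quadratic phase in $y$ and a lower-order term in $x$, then using the equidistribution of $d \mapsto d^2$ to average out the quadratic phase and reduce the degree, repeating until only a $U^2$-obstruction survives. Managing the error terms in this iteration so that they remain summable, and ensuring the Bohr sets / progressions produced at each stage do not degrade too quickly in dimension, is the technical heart of the argument; everything else is either classical (the PET induction, the final density increment) or a routine if lengthy bookkeeping exercise.
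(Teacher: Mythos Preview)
Your outline correctly identifies the analytic core --- PET/van der Corput to reach Gowers-norm control, followed by degree lowering with polynomial losses down to a $U^1$-level Fourier obstruction, avoiding the quantitative $U^s$ inverse theorem. The paper does exactly this, obtaining $U^5$-control of a dual function in the \emph{vertical} variable $y$ (not in $x$, as you write) and then lowering the degree in $y$. The gap is in step (iv) and the surrounding density-increment framework. The inverse theorem that emerges says the weight functions correlate with products $(x,y) \mapsto e(\alpha x)\phi(y)$ or $(x,y) \mapsto \psi(x) e(\beta y)$, where $\alpha,\beta$ are major-arc but $\phi,\psi$ are \emph{completely arbitrary} $1$-bounded functions of the transverse variable; the paper exhibits an explicit random example showing that no further structure on $\phi,\psi$ can be extracted. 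Only one coordinate acquires arithmetic structure, so there is no ``product of a progression with a Bohr-type set'' on which the density of $A$ must increase --- the arbitrary $\phi$ (or $\psi$) blocks any pigeonholing in the other direction. The paper also argues explicitly that comparing with the random count $\delta^3$ is the wrong starting point for this kind of multidimensional configuration.

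The paper's route around this is to abandon density increment. It compares the configuration count with the corners-type count $\E_{x,y,x',y'} 1_A(x,y)1_A(x',y)1_A(x,y')$ (which is $\geq \delta^3$ by H\"older), applies the inverse theorem to the fibre-balanced function $f_2(x,y) = 1_A(x,y) - \E_{y'} 1_A(x,y')$, and runs an \emph{energy} increment: each step increases the $L^2$-energy of $1_A$ convolved in the $y$-direction with a Fej\'er kernel supported on a progressively finer arithmetic progression, by an amount $\eps^{O(1)}$, so the iteration halts after $\eps^{-O(1)}$ steps. This yields the popular-difference theorem, and the density bound then follows on taking $\eps = \delta^3/2$.
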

Simultaneous to our work, Kravitz, Kuca and Leng \cite{kravitz2024corners} have obtained a result of this type for configurations of the form $(x,y)$, $(x+P(d),y)$, $(x,y+P(d))$, where $P\in\Z[d]$ has an integer root of multiplicity one. Together, these constitute the first effective cases of the multidimensional polynomial Szemer\'edi theorem
involving both a genuinely multidimensional configuration and a genuinely nonlinear
polynomial.

We derive our density bound on taking $\eps = \delta^3/2$ in the following stronger result, which proves the existence of a ``popular''  difference (with effective bounds).
\begin{theorem}[Popular difference in two dimensions]\label{thm:2D-pop-diff}
Let $A \subset \set{1,2,\dots,N}\times \set{1,2,\dots,N}$ with $|A| \geq \delta N^2$ and let $0<\eps \leq 1/2$.
Either $N \leq \exp\brac{\eps^{-O(1)}}$ or there exists $d \neq 0$ such that
$$
\hash\set{(x,y) \in A : (x+d, y), (x,y+d^2) \in A} \geq \brac{\delta^3-\eps} N^2.
$$
\end{theorem}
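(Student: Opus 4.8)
The plan is to derive the statement from a generalised von Neumann inequality for the configuration together with the degree-lowering technique of Peluse and Prendiville, so that the arithmetic regularity lemma is never invoked; this is precisely what makes the bound exponential rather than tower-type. First I would carry out routine reductions: embed $\{1,\dots,N\}^2$ into $\Z_{N'}\times\Z_{N'}$ with $N'$ prime and $\asymp N$, restrict the difference parameter to an interval $D$ of length $M\asymp\sqrt N$ so that $x\mapsto x+d$ and $y\mapsto y+d^2$ do not wrap, and write $f=1_A$ and $T(d)=\E_{x,y}f(x,y)f(x+d,y)f(x,y+d^2)$, so the goal becomes $\max_{d\in D}T(d)\ge\delta^3-\eps$; since $D$ omits $0$, any admissible $d$ is automatically nonzero. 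I would also dispose of the case $\delta^3\le\eps$ immediately: there $T(d)\ge 0\ge\delta^3-\eps$ for every $d$, so from now on $\delta\gg\eps^{1/3}$.

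For the analytic core I would first prove, by repeated Cauchy--Schwarz and van der Corput/PET-style differencing in $d$ (to eliminate the $d$- and $d^2$-shifts), a generalised von Neumann inequality showing that $\E_{d\in D}T(d)$, and more importantly the averages obtained on replacing one copy of $f$ by an arbitrary bounded function, are controlled by Gowers box-norms of the functions in the $(x+d,y)$ and $(x,y+d^2)$ slots. On top of this I would run the degree-lowering argument: the contribution of a top-degree Gowers norm can be traded for that of a strictly lower one, iterating down to the level of ordinary Fourier analysis (the $U^2$-norm), whose inverse theorem is quantitatively trivial. In one step this yields a decomposition $f=f_1+f_2$, where $f_1=f*\mu$ is the convolution of $f$ with a regular Bohr cut-off whose frequency set $\Gamma$ has size $\eps^{-O(1)}$ and collects the $\eps^{O(1)}$-large Fourier coefficients of $f$, while $f_2$ has negligible $U^2$-norm; crucially $0\le f_1\le 1$ and $\E f_1=\delta$. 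No iteration of a regularity lemma is needed, so $|\Gamma|=\eps^{-O(1)}$ and the complexity is polynomial.

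The endgame is then the usual popular-difference manoeuvre made quantitative. For $d$ lying in the Bohr set $B=B(\Gamma,\rho)$ with $\rho=\eps^{O(1)}$ and with $d^2$ also lying in $B$, the shifts $x\mapsto x+d$ and $y\mapsto y+d^2$ nearly preserve $f_1$, so $\E_{x,y}f_1(x,y)f_1(x+d,y)f_1(x,y+d^2)\approx\E_{x,y}f_1(x,y)^3\ge(\E f_1)^3=\delta^3$ by convexity of $t\mapsto t^3$ on $[0,1]$; the other seven terms in the expansion of $T(d)$ each contain a copy of $f_2$ and, averaged over $d$ in this set, are $O(\eps)$ by the generalised von Neumann inequality, so $T(d)\ge\delta^3-\eps$ for some such $d$. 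It remains to see this set of $d$ is nonempty, and here the hypothesis on $N$ enters: the constraint $d^2\in B$ is a Weyl-type equidistribution condition, and (using that degree-lowering forces the $d^2$-relevant frequencies to be major-arc, i.e.\ rational with denominator $\eps^{-O(1)}$) one may take $d$ to be a nonzero multiple of a fixed integer $q=\exp(\eps^{-O(1)})$, so that $q\mid d^2$ for free while still imposing the linear Bohr conditions; such $d\le M$ exist exactly when $M\gg\exp(\eps^{-O(1)})$, that is when $N\gg\exp(\eps^{-O(1)})$. For smaller $N$ the theorem is vacuous.

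The step I expect to be the main obstacle is the bookkeeping that keeps all of this at polynomial Fourier complexity. The generalised von Neumann error must be small not merely on average over $D$ but relative to the density $\exp(-\eps^{-O(1)})$ of the set of admissible $d$, and a naive ``bound the error on $D$, then restrict'' argument loses precisely this factor, which would force the Fourier threshold down and blow $|\Gamma|$ up to $\exp(\eps^{-O(1)})$, ultimately giving only a double-exponential bound. Avoiding this requires running the generalised von Neumann inequality with the averaging already restricted to the admissible set, exploiting that $f_2$ carries no Fourier mass on $\Gamma$ while every admissible phase $e((\xi d+\eta d^2)/N)$ with $\xi,\eta\in\Gamma$ is trivial there, so that the $d$-dependence of each $f_2$-term washes out and what remains vanishes by construction of $f_2$; the degree-lowering is what makes this restricted inequality, and hence the single-exponential bound, available. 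Threading the nested and dilated regular Bohr sets through this argument is the crux.
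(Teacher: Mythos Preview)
Your proposal has a genuine gap at the heart of the argument: the decomposition step and the convexity lower bound do not interact the way you need. The inverse theorem for this configuration does \emph{not} reduce to ordinary two-dimensional Fourier analysis. As Theorem~\ref{thm:inverse} (and the remark immediately following it) makes explicit, the obstructions at the $(x,y+d^2)$ slot are functions of the form $\psi(x)\,e(\beta y)$ with $\beta$ major-arc but $\psi$ an \emph{arbitrary} $1$-bounded function; the paper even exhibits an example with $\psi$ random. Concretely, degree lowering controls the count by the quantity $\sup_{\beta}\E_x\bigabs{\E_y f(x,y)e(\beta y)}$ over major-arc $\beta$, not by any global $U^2$ or box norm. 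No two-dimensional Bohr convolution kills this: if you convolve in both variables, the $x$-convolution does nothing to an arbitrary $\psi(x)$ and $f_2$ can still carry full mass in this ``norm''; if instead you convolve only in $y$ (which is what actually makes $f_2$ small there), then $f_1$ has no invariance whatsoever under $x\mapsto x+d$, and your step ``$f_1(x+d,y)\approx f_1(x,y)$'' fails outright. Either way, the inequality $\E f_1^3\ge(\E f_1)^3$ is not available for the purpose you want.

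The paper circumvents this in two moves your sketch misses. First, the lower bound on the main term comes not from convexity but from the Blakley--Roy (H\"older) inequality: after smoothing only in the vertical direction, the count is compared with the \emph{decoupled} corner-type count $\E_{x,y,x',y'}1_A(x,y)1_A(x',y)1_A(x,y')$, and it is this quantity that dominates $\delta^3$. Second, rather than a single decomposition, the paper runs an \emph{iterated} energy increment in the $y$-direction: each step produces a new modulus $\tilde q\ll\eps^{-O(1)}$, one passes to convolution with a finer vertical Fej\'er kernel along $q^2\tilde q^2\Z$, and the $L^2$-energy of this convolution jumps by $\eps^{O(1)}$. Termination after $\eps^{-O(1)}$ steps gives a final modulus $q\le\exp(\eps^{-O(1)})$ --- this iteration, not the size of any Bohr set, is the source of the single-exponential bound. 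Your single-step decomposition with $|\Gamma|=\eps^{-O(1)}$ would, if it worked, give a \emph{polynomial} bound, which should itself be a warning sign; your final paragraph correctly senses that the restricted averaging is where things go wrong, but the fix is not a clever restricted von Neumann inequality --- it is to abandon two-sided smoothing and adopt the one-directional energy increment together with the Blakley--Roy comparison.
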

Theorem \ref{thm:2D-pop-diff} is a consequence of our more general Theorem \ref{thm:pop-com-diff}, which  we also use (in \S\ref{sec:pop-com-diff}) to derive the existence of a popular difference in the one-dimensional context:
\begin{theorem}[Popular difference in one dimension]\label{thm:1D-pop-diff}
Let $A \subset \set{1,2,\dots, N}$ with $|A| \geq \delta N$ and let $0<\eps \leq 1/2$.
Either $N \leq \exp\brac{\eps^{-O(1)}}$ or there exists $d \neq 0$ such that
$$
\hash\set{x \in A : x+d, x+d^2 \in A} \geq \brac{\delta^3-\eps} N.
$$
\end{theorem}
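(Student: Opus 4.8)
The plan is to deduce Theorem~\ref{thm:1D-pop-diff} from the general common-difference result Theorem~\ref{thm:pop-com-diff}, and I sketch the mechanism of the latter specialised to the configuration $x,x+d,x+d^2$, together with how the one-dimensional statement drops out. Write $\delta=|A|/N$. By pigeonholing over $d$, it suffices to exhibit a probability measure $\nu$ on $\Z\setminus\set{0}$, supported on $0<|d|\leq c_0\sqrt N$ for a small absolute constant $c_0$, with
\begin{equation*}
\sum_{d}\nu(d)\,\E_{x\in[N]}1_A(x)1_A(x+d)1_A(x+d^2)\;\geq\;\delta^3-\eps ;
\end{equation*}
the restriction $|d|\leq c_0\sqrt N$ keeps $x+d^2$ inside $[N]$ for all but at most $\eps N$ values of $x$, so this average differs from $N^{-1}$ times the corresponding count by at most $\eps$.

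The engine is a quantitatively efficient regularisation of $A$, in the spirit of Peluse and Prendiville's work on the non-linear Roth configuration. The outcome I would aim for: there is a modulus $q=q(A,\eps)\leq\exp(\eps^{-O(1)})$ and a model function $g$ measurable with respect to residues modulo $q$ (morally $g(x)=\delta_r$ on the class $r\bmod q$, where $\delta_r$ is the density of $A$ there), such that, taking $\nu$ to be the uniform measure on $\set{d\neq 0: q\mid d,\ |d|\leq c_0\sqrt N}$,
\begin{equation*}
\Bigabs{\sum_{d}\nu(d)\,\E_x\bigbrac{1_A(x)1_A(x+d)1_A(x+d^2)-g(x)g(x+d)g(x+d^2)}}\leq\eps .
\end{equation*}
Writing $f=1_A-g$, the left-hand side is a sum of terms each carrying at least one factor $f$, and the point is that $f$ is Fourier-uniform away from the bounded family of ``major arcs'' (frequencies close to $aN/q$ with $q$ small), precisely because the mod-$q$ structure has been removed; the production of a single $q$ of merely exponential size — rather than the tower-sized Bohr set one gets from an iterated energy increment — comes from simultaneously rationally approximating the $\leq\eps^{-O(1)}$ characters at which $\widehat{1_A}$ is large. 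The one- and two-fold correlations of $f$ along the configuration are then killed by classical Weyl/Gauss-sum bounds for $\E_d e(\xi d^2/N)$ over $d$ in the progression $q\Z$, while the genuinely non-linear triple correlation $\E_d\E_x f(x)f(x+d)f(x+d^2)$ is controlled by the degree-lowering method, which reduces its control from $\norm{f}_{U^s}$ (for bounded $s$) down to $\norm{\widehat f}_\infty$. This requires the averaging range $c_0\sqrt N/q$ to exceed a suitable power of $\eps^{-1}$, which is where the hypothesis $N\geq\exp(\eps^{-O(1)})$ enters.

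Granting the engine, the conclusion is immediate: since $q\mid d$, the points $x,x+d,x+d^2$ lie in the residue class of $x$ modulo $q$, on which $g$ is the constant $\delta_r$, so, up to boundary effects already absorbed into $\eps$,
\begin{equation*}
\E_x g(x)g(x+d)g(x+d^2)=\sum_{r\bmod q}\tfrac1q\,\delta_r^{\,3}=\E_{r}\,\delta_r^{\,3}\;\geq\;\bigbrac{\E_{r}\,\delta_r}^{3}=\delta^3
\end{equation*}
for every such $d$, by convexity of $t\mapsto t^3$ and $\E_r\delta_r=\delta$. Combining the two displays gives the required averaged lower bound, whence some $d$ in the support of $\nu$ — in particular $d\neq 0$ — works. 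To obtain Theorem~\ref{thm:1D-pop-diff} formally from Theorem~\ref{thm:pop-com-diff} one specialises the functions and ambient group in the common-difference framework; if that theorem is phrased over a cyclic group one first transfers $[N]$ into $\Z/N'\Z$ and back, using that the fibres of $(a,b)\mapsto a+b$ are equinumerous there so that no density is lost.

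The main obstacle is the engine, and within it the error estimate: executing the degree-lowering so that the cubic correlation is genuinely controlled by $\norm{\widehat f}_\infty$ along the \emph{sparse} $d$-average over multiples of $q$, and verifying that the quadratic Weyl-sum contribution is negligible there. Conceptually the argument mirrors the proof of the popular linear Roth theorem — identify the characteristic factor for the configuration, then exploit positivity (here, convexity) on it — but for $x,x+d,x+d^2$ the characteristic factor turns out to be nothing more than the algebra of residues modulo a \emph{bounded} $q$, and it is this that replaces the tower-height Bohr set of the linear case and yields the exponential dependence.
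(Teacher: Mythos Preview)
Your approach is both different from the paper's and incomplete. The paper's derivation of the one-dimensional statement from Theorem~\ref{thm:pop-com-diff} is a short trick: embed $A\subset[N]$ into the two-dimensional set $\tilde A:=\{(x,y)\in[N^{1/2}]\times[N]:x+y\in A\}$, check that $|\tilde A|\geq(\delta-\eps)N^{3/2}$, apply Theorem~\ref{thm:pop-com-diff} with $N_1=N^{1/2}$ and $N_2=N$ to obtain a popular $d$ for $\tilde A$, and then take a maximum over the first coordinate to pull the conclusion back to $A$. No one-dimensional regularisation is carried out at all. Note in particular that Theorem~\ref{thm:pop-com-diff} is genuinely two-dimensional --- it concerns the pattern $(x,y),(x+qd,y),(x,y+q^2d^2)$ in a product set --- and cannot simply be ``specialised'' to the one-dimensional configuration as you suggest; the diagonal embedding is precisely what bridges the two.

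Your sketch instead re-runs the entire pipeline (inverse theorem, degree-lowering, energy increment) directly in one dimension. This could in principle be made to work, but as written it has real gaps. The ``engine'' is asserted rather than proved, and you flag this yourself. More substantively, the model $g$ you propose --- constant on residue classes modulo a bounded $q$ --- is too coarse: the inverse theorem (Theorem~\ref{thm:inverse}) outputs a major-arc frequency $\alpha$ with $\|q\alpha\|_{\T}\ll\delta^{-O(1)}/N^2$, not $\alpha=a/q$ exactly, and the paper's energy increment (Theorem~\ref{thm:energy-1}) accordingly replaces $f_2$ by its convolution with a Fej\'er kernel along a progression of step $q^2$ and \emph{length growing with $N$}, not merely by residue-class averages. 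Your convexity step, and the claim that ``the characteristic factor is nothing more than the algebra of residues modulo a bounded $q$'', would need to be redone with this finer (progression-scale) model; that is exactly what Theorem~\ref{thm:pop-com-diff} accomplishes via the Blakley--Roy/H\"older argument in two variables, and what the embedding trick then transports back to one dimension for free.
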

We remark that the above exponential bound $N \leq \exp\brac{\eps^{-O(1)}}$ is in sharp contrast to the tower-type bound necessary for linear three-term progressions, see \cite{FPZTower}. 

As a corollary to Theorem \ref{thm:1D-pop-diff}, we obtain a new proof that subsets of
$\set{1,2,\dots,N}$ lacking the nonlinear Roth configuration have at most polylogarithmic
density, which is the main result of \cite{PelusePrendivillePolylogarithmic}. As
in~\cite{PelusePrendivillePolylogarithmic}, our argument crucially relies on the advances
made in~\cite{PelusePrendivilleQuantitative}.

The main input to all of the above results is the following inverse theorem, which characterises those functions that count many configurations of the form \eqref{eq:config}. Importantly, the quantitative dependence in this inverse theorem is polynomial. (In our inverse theorem, it turns out to be more natural  to work with the asymmetric grid\footnote{$[N]:=\set{1,2,\dots,N}$.} $[N]\times [N^2]$, rather than $[N]\times [N]$.) 
\begin{theorem}[Inverse theorem]\label{thm:inverse}
Let $f_0, f_1, f_2 : \Z^2 \to \C$ be 1-bounded functions with support in $[N]\times [N^2]$. Suppose that
\begin{equation}\label{eq:large-count}
\biggabs{\sum_{x,y,d} f_0(x,y)f_1(x+d,y) f_2(x,y+d^2)}\\ \geq \delta N^4.
\end{equation}
Then\footnote{$f\ll g$ is a shorthand for $f = O(g)$.} either $N \ll \delta^{-O(1)}$ or there exist\footnote{$\T:=\R/\Z$, $\norm{\alpha}_{\T}:=\min_{n\in\Z} |\alpha - n|$, $e(\alpha) := e^{2\pi i \alpha}$.} $\alpha, \beta \in \T$ and $q \ll \delta^{-O(1)}$ with $\norm{q\alpha}_{\T} \ll\delta^{-O(1)}/N$ and $\norm{q\beta}_{\T} \ll\delta^{-O(1)}/N^2$ such that
\begin{equation}\label{eq:inverse-eq}
\sum_{y} \biggabs{\sum_{x} f_1(x, y) e\brac{\alpha x}} \gg \delta^{O(1)} N^3 \quad \text{and} \quad\sum_{x} \biggabs{\sum_{y} f_2(x, y) e\brac{\beta y}} \gg \delta^{O(1)} N^3  .
\end{equation}
\end{theorem}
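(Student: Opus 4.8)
\emph{Sketch of proof.} The plan is to adapt the ``PET induction plus degree-lowering'' method of Peluse and Prendiville~\cite{PelusePrendivilleQuantitative} to the two-dimensional configuration \eqref{eq:config}, proving the two assertions of \eqref{eq:inverse-eq} one function at a time. The structural point that makes this feasible is that $d$ enters $f_1$ through the linear term $d$ but enters $f_2$ through $d^2$, so a single van der Corput step in $d$ replaces a difference such as $(d+h)^2-d^2=2hd+h^2$ by a form that is \emph{affine} in $d$ for each fixed $h$. This collapses the arithmetic complexity of the pattern to that of a linear one, which is exactly what allows (quadratic, then linear) Fourier analysis in the $y$-variable to control $f_2$, and linear Fourier analysis in the $x$-variable to control $f_1$.

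\emph{Controlling $f_2$.} Write the left side of \eqref{eq:large-count} as $\sum_{x,y}f_0(x,y)F(x,y)$, where $F(x,y):=\sum_d f_1(x+d,y)f_2(x,y+d^2)$; since $f_0$ is $1$-bounded with support in $[N]\times[N^2]$ this gives $\sum_{x,y}|F(x,y)|\gg\delta N^4$, whence $\sum_{x,y}|F(x,y)|^2\gg\delta^2N^5$ by Cauchy--Schwarz. Expanding the square duplicates $d$ into $d$ and $d'=d+h$; after translating $y$ the surviving copies of $f_2$ are evaluated at arguments differing by the affine form $2hd+h^2$, and a bounded number of further Cauchy--Schwarz (van der Corput) steps in the variables carrying $f_0$ and $f_1$ removes those functions as well, leaving a bound of the shape ``$\sum_x\bigl(\text{weighted }U^2\text{-type count of }f_2(x,\cdot)\bigr)\gg\delta^{O(1)}N^{O(1)}$'' in which the weights are $1$-bounded and expressed through Weyl sums $\sum_d e\bigl(\beta(2hd+h^2)\bigr)$. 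The inverse theorem for the $U^2$-norm (Plancherel) in the $y$-variable then furnishes, for a $\delta^{O(1)}$-proportion of $x$, a frequency $\beta_x$ with $\bigabs{\sum_y f_2(x,y)e(\beta_x y)}\gg\delta^{O(1)}N^2$; a priori $\beta_x$ depends on $x$ and on the auxiliary parameters. Here the degree-lowering/equidistribution machinery of~\cite{PelusePrendivilleQuantitative} is decisive: the equidistribution of $d\mapsto d^2$ forces any frequency that can occur in this way to lie within $\delta^{-O(1)}/N^2$ of a rational of denominator $q\ll\delta^{-O(1)}$, and pigeonholing over the $O(\delta^{-O(1)})$ admissible rationals (with one more Cauchy--Schwarz over $x$) replaces $\beta_x$ by a single such $\beta$. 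This produces $\sum_x\bigabs{\sum_y f_2(x,y)e(\beta y)}\gg\delta^{O(1)}N^3$ together with the stated Diophantine condition on $\beta$.

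\emph{Controlling $f_1$, and conclusion.} The argument for $f_1$ follows the same template, now stripping $f_0$ and $f_2$ by Cauchy--Schwarz; since $d$ enters $f_1$ through a linear shift, that shift itself needs no degree-lowering, but the copies of $f_2$ remaining after the differencing still carry $d^2$, so the quadratic Weyl-sum/equidistribution input is again used to conclude that the obstruction to uniformity of $f_1(\cdot,y)$ is a single major-arc linear phase $e(\alpha x)$, valid for a $\delta^{O(1)}$-proportion of $y$, with some $q\ll\delta^{-O(1)}$ and $\norm{q\alpha}_{\T}\ll\delta^{-O(1)}/N$. Replacing the two denominators by their $\lcm$ (still $\ll\delta^{-O(1)}$) delivers \eqref{eq:inverse-eq}; all losses being powers of $\delta$, the dependence is polynomial. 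The step I expect to be the main obstacle is this degree-lowering/equidistribution passage --- turning ``$f_2(x,\cdot)$ is non-uniform with an $x$-dependent, a priori irrational, frequency for many $x$'' into ``a single rationally-structured $\beta$ works for many $x$'' --- which is precisely where the quadratic structure of $d$ is pushed hardest and where the polynomial quantitative bounds are secured. Two routine technical nuisances also have to be absorbed: for fixed $(x,y)$ the admissible range of $d$ depends on $(x,y)$ (handled by a dyadic decomposition or smooth cutoffs), and the grid is asymmetric, so the two scales $1/N$ and $1/N^2$ must be tracked separately throughout.
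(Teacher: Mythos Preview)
Your high-level plan (PET, degree lowering, major-arc pigeonhole) matches the paper's, but there is a genuine gap in how you propose to execute degree lowering, and a significant underestimate of the norm that PET actually produces.

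\textbf{The dual is essential, not optional.} You arrange for PET/van der Corput to produce ``a weighted $U^2$-type count of $f_2(x,\cdot)$'', and then say that the $U^2$-inverse theorem gives frequencies $\beta_x$ which are forced to be major arc by ``the equidistribution of $d\mapsto d^2$''. But by the time you have differenced away all copies of $d^2$ to reach a $U^s$-count of $f_2$, there is no longer any $d^2$ left to equidistribute: an arbitrary $1$-bounded function with large $U^s$-norm in the $y$-variable has no reason to correlate with a \emph{major-arc} phase. The paper avoids this by never putting the Gowers norm on $f_2$. Instead it Cauchy--Schwarzes to the dual
\[
F_x(y)=\sum_d \mu_N(d)\, f_0(x,y-d^2)\,f_1(x+d,y-d^2),
\]
shows $\|F_x\|_{U^5}$ is large (Corollary~\ref{cor:dual-U5}), and then runs degree lowering on $F$ (Lemma~\ref{lem:deg-lower}). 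The point is that $F$ still carries an \emph{explicit} $d^2$ in its definition; the dual--difference interchange lets one peel this off as a Weyl sum $\sum_d \mu_N(d)e(\phi\, d^2)$, and only now does Weyl's inequality force the frequency $\phi$ to be major arc. Without passing to the dual, your argument has no leverage at the step you yourself flag as the main obstacle.

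\textbf{$U^5$ and concatenation, not $U^2$.} After the van der Corput iterations the differencing parameters on $g_x$ are \emph{products} such as $2(a+b)h_1$, $2bh_2$, $2ah_3$, so one lands on a box norm, not a Gowers norm. The paper invokes the quantitative concatenation machinery of \cite[Theorem~5.6]{PelusePrendivilleQuantitative} to convert this into genuine $U^5$-control (Theorem~\ref{global U5}); this is a substantial input, not ``a bounded number of further Cauchy--Schwarz steps''. Degree lowering then has to iterate $U^5\to U^4\to\cdots\to U^1$, each step using the dual structure as above.

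\textbf{Order of the two conclusions.} The paper does not run two parallel arguments. It proves the $f_1$-conclusion first (degree lowering the dual $F$ outputs correlation of $f_1$, not $f_2$, with a major-arc quadratic phase, which Lemma~\ref{lem:dual-major} turns into a linear one), and then obtains the $f_2$-conclusion by a second dual trick: define $G(x,y)=\sum_d\mu_N(d)f_0(x-d,y)f_2(x-d,y+d^2)$, Cauchy--Schwarz so that $G$ sits in the $f_1$-slot, and apply the already-proved $f_1$ inverse theorem to $G$. This is both shorter and avoids repeating the PET/concatenation/degree-lowering package.
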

This inverse theorem is proved in \S\ref{sec:inv-thm}, where it is derived from a combination of Theorem \ref{thm:inv-thm-1} and Theorem \ref{thm:inv-thm-2}.

The analogous one-dimensional inverse theorem \cite{PelusePrendivilleQuantitative,
  PelusePrendivillePolylogarithmic} has been used by Krause, Mirek and Tao
\cite{KrauseMirekTaoPointwise} in their breakthrough work on pointwise convergence of
bilinear polynomial ergodic averages. We expect our multidimensional inverse theorem to be useful in the study of ergodic averages of the form 
$$
\recip{N}\sum_{d=1}^N f(T_1^dx)g(T_2^{d^2} x),
$$
where $T_1$ and $T_2$ commute.
\begin{remark}[The obstructions in our inverse theorem are necessary]
By removing the absolute value in \eqref{eq:inverse-eq}, at the cost of introducing signs, one can re-phrase our inverse theorem as saying that if the counting operator for our configuration is large, as in \eqref{eq:large-count}, then the function weighting the $(x+d,y)$ term correlates with a function of the form
$$
(x,y) \mapsto e(\alpha x)\phi(y),
$$
where $\alpha$ is ``major arc at scale $N$'' and $\phi : \Z \to \C$ is 1-bounded. Similarly, the function weighting the $(x, y+d^2)$ term correlates with a function of the form
$$
(x,y) \mapsto \psi(x)e(\beta y),
$$
where $\beta$ is major arc at scale $N^2$ and $\psi : \Z \to \C$ is 1-bounded.

Notice that the above formulation does not posit any structure for the 1-bounded functions $\phi$ and $\psi$. There is no hope in being able to extract further structure regarding these functions: their arbitrariness is necessary. This is exhibited in the following example.

Let $\phi : [N^2]\to\set{\pm 1}$ be a random function that takes the value 1 with probability 1/2, independently as the argument ranges over $[N^2]$. For $(x,y) \in [N]\times [N^2]$ set
$$
f_0(x,y) := \phi(x)\phi(y)(-1)^{x+y},\quad f_1(x,y) := (-1)^x\phi(y)\quad\text{and}\quad f_2(x,y):=\phi(x)(-1)^y.
$$
Then
\begin{equation*}
\sum_{x,y,d} f_0(x,y)f_1(x+d,y) f_2(x,y+d^2)= \sum_{x,y,d} 1_{[N]}(x)1_{[N^2]}(y)1_{[N]}(x+d) 1_{[N^2]}(y+d^2) \gg N^4.
\end{equation*}
Yet for any ``reasonable'' notion of structured function $\psi$, the randomness of $\phi$ means that the fibre maps $y \mapsto f_1(x,y)$ and $x\mapsto f_2(x,y)$ almost surely do not correlate with $\psi$.
\end{remark}

\subsection{Previous work}
\subsubsection{Density bounds for higher-dimensional configurations}

Let us first put our density bound (Theorem \ref{thm:density-bound}) in context.

Given any two-point polynomial configuration (without congruence obstructions), it is
likely that one can show that subsets of $[N]^n$ lacking this configuration have at most
polylogarithmic density. One way to prove this is to generalise S\'ark\"ozy's argument
\cite{SarkozyDifferenceI} for the configuration $x,x+d^2$; see Lyall and Magyar's
\cite{LyallMagyarPolynomial} treatment of $\vx, \vx + (d,d^2,\dots, d^n)$ for an instance
of this approach. It is also plausible that even stronger bounds could be obtained by
adapting the argument of Bloom and Maynard~\cite{BloomMaynard}, who have proved the current best
known bounds in S\'ark\"ozy's theorem.

Longer linear configurations in $\Z^n$ can be encoded as \textit{matrix progressions}
$$
\vx,\ \vx + T_1\vd,\ \dots\ \vx + T_s \vd,
$$
where $T_i$ are $n\times n$ matrices with integer entries. As was demonstrated in
\cite{PrendivilleMatrix}, provided that the $T_i$ and their differences $T_i - T_j$
($i\neq j$) are all non-singular, one can extract density bounds for sets lacking these
configurations by adapting the higher-order Fourier analysis techniques of Gowers \cite{GowersNew}. One may be able to synthesise this approach with that of \cite{PrendivilleQuantitative} in order to bound the density of sets lacking nonlinear matrix progressions of the form
$$
\vx,\ \vx + T_1(d_1^k,\dots, d_n^k),\ \dots\ \vx + T_s (d_1^k,\dots, d_n^k),
$$ 
again provided that the $T_i$ and their differences $T_i - T_j$ ($i\neq j$) are all non-singular.

It is much harder to obtain quantitative bounds for sets lacking \textit{singular} matrix progressions. Preceding this article and that of Kravitz-Kuca-Leng \cite{kravitz2024corners}, there are  only two results in this direction (for progressions of length greater than two). The first is due to Shkredov \cite{ShkredovGeneralisation} and obtains double-logarithmic bounds for sets lacking \textit{any} three-point matrix progression, regardless of the singularity of the matrices and their differences. This very general result is deduced in \cite[Theorem B.2]{PrendivilleMatrix} from Shkredov's much more concrete result which bounds the density of sets lacking two-dimensional corners:
\begin{equation}\label{eq:corners}
(x,y),\ (x+d, y),\ (x,y+d) \quad \text{with}\quad d\neq 0.
\end{equation} 
Very recently, the first author \cite{peluse2022subsets} obtained the first reasonable bounds for sets lacking a specific four-point singular matrix progression, namely L-shapes:
\begin{equation}\label{eq:L-shapes}
(x,y),\ (x+d, y),\ (x,y+d),\ (x,y + 2d) \quad \text{with}\quad d\neq 0.
\end{equation}
Although this argument was carried out over the finite field model $\F_p^n\times \F_p^n$, those conversant with the appropriate machinery will see that adapting the argument to the integer setting is now within reach, albeit technically demanding.

We believe the arguments of this paper open up the possibility of obtaining bounds for sets lacking polynomial progressions of the form
\begin{equation}\label{eq:long-config}
\vx,\ \vx + P_1(d)\vv_1,\ \dots \ ,\ \vx + P_s(d) \vv_s \quad \text{with} \quad d \neq 0,
\end{equation}
where $\vv_1,\dots, \vv_s$ are fixed integer vectors and the polynomials $P_i$ have
integer coefficients, zero constant term and (importantly) \textit{distinct} degrees. Our
configuration \eqref{eq:config} corresponds to taking $P_1 = d$, $P_2 = d^2$, $\vv_1 =
(1,0)$ and $\vv_2 = (0,1)$. For the analogous problem over the finite field $\F_p$, our
paper corresponds to work of Han-Lacey-Yang \cite{han2021polynomial}, and we posit that it
is possible to obtain the analogue of Kuca's more general result \cite{kuca2024multidimensional} in the integers. In this paper we make heavy use of the one-dimensional machinery developed in \cite{PelusePrendivilleQuantitative} for $x,x+d,x+d^2$, and for \eqref{eq:long-config} we expect that the appropriate one-dimensional machinery is available in \cite{PeluseBounds}. 
\subsubsection{Existence of a popular difference}

We now turn to the context surrounding our popular difference results (Theorem \ref{thm:2D-pop-diff} and Theorem \ref{thm:1D-pop-diff}). To our knowledge, the first results in this direction concern three-term and four-term arithmetic progressions \cite{BHKMultiple}. These results were obtained via ergodic methods and as a result concern the weaker notion of {upper Banach density}, rather than the lower density we deal with in Theorem \ref{thm:2D-pop-diff} and Theorem \ref{thm:1D-pop-diff}. There have since been a number of ergodic works dealing with other configurations, see for instance \cite{frantzikinakis2008multiple, QingMultiple, DLMSOptimal}. In particular, Chu, Frantzikinakis and Host \cite{chu2011ergodic} have proved the ergodic (and hence qualitative) analogue of Theorem \ref{thm:2D-pop-diff}.

Green \cite{GreenSzemeredi} pioneered the use of arithmetic regularity to study these problems, proving the existence of a popular difference for three-term arithmetic progressions and thereby strengthening \cite{BHKMultiple} from upper Banach density to lower density. Green's theorem states that for any $\eps >0$ and any $A\subset [N]$ with $|A|\geq \delta N$, either $N \leq C(\eps)$ or there exists $d \neq 0$ such that
\begin{equation}\label{eq:green}
\hash\set{x\in A: x+d,\ x+2d \in A} \geq (\delta^3-\eps)N.
\end{equation}
Green and Tao \cite{GreenTaoArithmetic} used higher-order Fourier analysis to obtain the analogous result for four-term arithmetic progressions. The result is false for longer progressions; see Ruzsa's appendix to \cite{BHKMultiple}. 

In Green's argument for \eqref{eq:green}, the constant $C(\eps)$ is bounded in terms of a tower of twos of height $\eps^{-O(1)}$. This tower height was reduced to $O(\log(1/\eps))$ by Fox, Pham and Zhao \cite{FoxPhamPopularI, FoxPhamPopularII, FPZTower}, who went on to demonstrate (remarkably) that this bound is best possible. 

In higher dimensions, existence of a popular common difference for certain non-singular matrix progressions of length 3 and 4 was established in \cite{bergeretal2022popular}.  For corners \eqref{eq:corners}, the ergodic version of a popular difference was studied by Chu \cite{QingMultiple}, with combinatorial analogues obtained by Mandache \cite{mandache2021variant}. Both of these authors establish that the naive conjecture is not correct for the popular difference version of the corners theorem. The ``correct'' conjecture was emphatically addressed in \cite{foxetal2020triforce}. In \cite{sah2021patterns}, linear configurations were classified according to  when the naive conjecture is correct.

All of the above results concerning popular differences have either been qualitative or
the bounds obtained have been tower-type. The only result we are aware of with reasonable
bounds is due to Lyall and Magyar \cite{lyall2013optimal}, and concerns two-point
polynomial progressions in one dimension. Lyall and Magyar show that if $P$ is a
polynomial with integer coefficients and zero constant term, then for any $\eps >0$ and
$A\subset [N]$ with $|A|\geq \delta N$, either there exists $d \in \Z\setminus\set{0}$
such that
$$
\hash\set{x\in A: x+P(d) \in A} \geq (\delta^2-\eps)N,
$$ 
or $N \leq \exp\exp\brac{O\brac{\eps^{-1}\log(\eps^{-1}}}$. We believe that by suitably adapting the methods of this paper, one should be able to replace this double exponential bound with $N \leq \exp\brac{\eps^{-O(1)}}$. Determining the correct bound on $N$ seems an interesting problem, even in the simplest situation of $x$, $x+d^2$. In forthcoming work by the second author, the third author and Mengdi Wang, we generalize Lyall and Magyar's result to certain multi-point polynomial progressions, proving that if $P_1,\cdots,P_m$ are polynomials with integer coefficients and zero constant terms with distinct degrees, then for any $\eps > 0$ and
$A\subset [N]$ with $|A|\geq \delta N$, either there exists $d \in \Z\setminus\set{0}$
such that
$$
\hash\set{x\in A: x+P_1(d),\cdots,x+P_m(d) \in A} \geq (\delta^{m+1}-\eps)N,
$$ 
or $N \leq \exp\exp\brac{O\brac{\eps^{-1}}}$.

\section{An outline of our argument}\label{sec:outline}
All statements in this section are of a heuristic and informal nature, in particular much of what we write is, strictly speaking, false. To avoid obfuscating our sketch by tracking ranges of summation, we simply write $\E_x f(x)$ to indicate a normalised sum, with the range of summation to be inferred from the context.

Given a set $A \subset [N]^2$ of density $\delta := |A|/N^2$, our starting point is to study the count
\begin{equation}\label{count1}
\E_{x,y,d} 1_A(x,y)1_A(x+d,y) 1_A(x,y+d^2).
\end{equation}
Traditionally in additive combinatorics, one compares this with the count of configurations lying in a random set of density $\delta$, which (after normalisation) is of order $\delta^3$. If these counts are close, then one has a profusion of configurations in the set $A$. If not, then we hope to show that $A$ has some exploitable structure. In a density increment argument, we typically exploit this by passing to a substructure where the density is larger. Iterating this argument eventually yields a substructure where the count of configurations \textit{is} close to the random count, finishing our proof.

For many multidimensional configurations, we contend that the random count $\delta^3$ is
not a useful comparison to make. One reason for this is that there are very unstructured
sets that do not have the random number of configurations: taking an example from
\cite[\S5]{GreenFinite}, consider a product of random sets $B_1 \times B_2$ where each
$B_i\subset [N]$ has density $\sqrt{\delta}$. In the corners problem \eqref{eq:corners},
one is forced to deal with these very loosely structured objects - they only possess the
structure of a Cartesian product. However, the inverse theorem for our configuration
(Theorem \ref{thm:inverse}) says that the sets we need to deal with have more structure:
they take the form $B\times P$ where $P\subset [N]$ is a \textit{dense} arithmetic
progression (i.e., of length proportional to $N$ and with small common difference) and $B\subset [N]$ is arbitrary. After moving to a grid with slightly larger step size, this purported ``bad'' set can be thought of as taking the form $B'\times [N']$, where $B'$ has no exploitable structure.

With the above discussion in mind, and following \cite{han2021polynomial, kuca2024multidimensional}, we contend that it is more sensible to compare \eqref{count1} with the following count, where we have essentially replaced the third indicator function $1_A(x,y+d^2)$ with its average $\E_{y'} 1_A(x,y')$ on vertical fibres:
\begin{multline}\label{count2}
\E_{x,y,d} 1_A(x,y)1_A(x+d,y) \E_{d'}1_A(x,y+  d'+d^2)\\
\approx \E_{x,y,d,d'} 1_A(x,y)1_A(x+d,y) 1_A(x,y + d')\\
\approx \E_{x,y,x',y'} 1_A(x,y)1_A(x',y) 1_A(x,y').
\end{multline}
 A (non-obvious\footnote{See \url{https://mathoverflow.net/questions/189222}.}) application of H\"older's inequality gives that
$$
\E_{x,y,x',y'} 1_A(x,y)1_A(x',y) 1_A(x,y') \geq \brac{\E_{x,y} 1_A(x,y)}^3.
$$
 Hence, we have a profusion of configurations, and, in particular, the existence of a popular difference, if \eqref{count1} is $\eps$-close to \eqref{count2}.
 
 If \eqref{count1} is not $\eps$-close to \eqref{count2}, then, on setting
\begin{equation}\label{eq:f2-def}
f_2(x,y) := 1_A(x,y) - \E_{d'} 1_A(x,y+d'),
\end{equation}
we have
$$
\abs{\E_{x,y,d} 1_A(x,y)1_A(x+d,y) f_2(x,y+d^2)} \geq \eps.
$$
At this point, we apply our inverse theorem (Theorem \ref{thm:inverse}) to deduce the existence of $\alpha \approx a/q$ with $q\leq \eps^{-O(1)}$ such that
$$
\E_x\abs{\E_y f_2(x,y)e( \alpha y)} \geq \eps^{O(1)}.
$$
The function $y \mapsto e(\alpha y)$ is approximately invariant under small shifts of the form $y \mapsto y + q^2d'$, so by a little averaging, there exists $N' \geq \eps^{O(1)} N$ such that
\begin{equation}\label{eq:large-convolve}
\E_{x,y}\abs{\E_{|d'|<N'} f_2(x,y+q^2 d')} \geq \eps^{O(1)}.
\end{equation}
Equation \eqref{eq:large-convolve} is saying that we can convolve $f_2$ in the second coordinate with a long arithmetic progression of small common difference, in order to yield a function with large $L^1$-norm.  Recall from \eqref{eq:f2-def} that $f_2$ is, essentially, $1_A$ minus the convolution of $1_A$ in the second coordinate with the interval $(-N,N)$. We can, therefore, use \eqref{eq:large-convolve}, together with approximate orthogonality properties of convolution, in order to obtain an \textit{energy increment}:
$$
\E_{x,y}\abs{\E_{|d'|<N'} 1_A(x,y+q^2 d')}^2 \geq \E_{x,y}\abs{\E_{|d|<N} 1_A(x,y+ d)}^2 + \eps^{O(1)}.
$$

We now repeat the above argument, this time focusing on the \textit{restricted} counting operator
\begin{equation*}
\E_{x,y}\E_{|d|<N'} 1_A(x,y)1_A(x+qd,y) 1_A(x,y+q^2d^2),
\end{equation*}
which counts configurations where the difference parameter is divisible by $q$ and ranges over a shorter interval. We compare this restricted counting operator with the restricted analogue of \eqref{count2}, namely
$$
\E_{x,y}\E_{|d|,|d'|< N'} 1_A(x,y)1_A(x+qd,y) 1_A(x,y+q^2d').
$$
Again, this comparison either yields a profusion of configurations in our set $A$, or alternatively we have an energy increment of the form
$$
\E_{x,y}\Bigabs{\E_{| \tilde d|<\tilde N} 1_A(x,y+q^2\tilde{q}^2  \tilde d)}^2 \geq \E_{x,y}\Bigabs{\E_{|d'|<N'} 1_A(x,y+ q^2d')}^2 + \eps^{O(1)},
$$
for some $\tilde q \ll \eps^{-O(1)}$ and some $\tilde N \gg \eps^{O(1)} N'$.

Iterating the energy increment $n$ times, we obtain  a convolution whose energy is at least $n\eps^{O(1)}$. Since the energy is bounded above by 1, the energy increment must terminate after at most $n \leq \eps^{-O(1)}$ steps. Setting $q_1:= q$, $q_2:= q\tilde q$ and so on, the iteration yields some common difference $q_n$ of size
$$
q_n \leq \underbrace{\eps^{-O(1)}\dotsm \eps^{-O(1)}}_{\eps^{-O(1)} \text{ times}} = \exp\brac{\eps^{-O(1)}} 
$$
and an interval length $N_n$ satisfying
$$
N_n \geq \underbrace{\eps^{O(1)}\dotsm \eps^{O(1)}}_{\eps^{-O(1)} \text{ times}} N = N/\exp\brac{\eps^{-O(1)}},
$$ such that we have the comparison
\begin{multline*}
|\E_{x,y}\E_{|d|< N_n} 1_A(x,y)1_A(x+q_nd,y) 1_A(x,y+q_n^2d^2)\\
- \E_{x,y}\E_{|d|,|d'|< N_n} 1_A(x,y)1_A(x+q_nd,y) 1_A(x,y+q_n^2d')| \leq \eps.
\end{multline*}
This delivers a popular difference, since an argument using H\"older's inequality again gives something of the form
$$
\E_{x,y}\E_{|d|,|d'|< N_n} 1_A(x,y)1_A(x+q_nd,y) 1_A(x,y+q_n^2d') \gtrapprox \delta^3.
$$
\subsection{Paper organisation}
Proving our inverse theorem (Theorem \ref{thm:inverse}) occupies
\S\S\ref{sec:pet}-\ref{sec:inv-thm}. The first part of this argument (\S\ref{sec:pet})
shows that our configuration is controlled by a Gowers $U^5$-norm in the vertical
direction. This uses the PET induction scheme of Bergelson-Leibman
\cite{BergelsonLeibmanPolynomial} together with the ``quantitative concatenation''
machinery developed in \cite{PelusePrendivilleQuantitative}. In \S\ref{sec:deg-lower}, we
use the degree lowering technique as developed in \cite{PelusePrendivilleQuantitative} to reduce the $U^5$-norm to a $U^1$-norm (after passing to a long progression with small common difference). In \S\ref{sec:inv-thm}, we combine the results of the previous  sections in order to derive our inverse theorem. Our energy increment argument is carried out in \S\ref{sec:energy} and we deduce the existence of a popular common difference in \S\ref{sec:pop-com-diff}. Notation that is used repeatedly is collected in Appendix \ref{sec:notation}.
\section{PET induction and $U^5$-control in the vertical direction}\label{sec:pet}
%
We begin by showing that our counting operator is controlled by a Gowers $U^5$-norm in the vertical direction. In order to establish the existence of a popular common difference (Theorem \ref{thm:2D-pop-diff}), it is  convenient to work with a counting operator that incorporates a nicer weight than the sharp cut-off $d \in [-N,N]$ used in the inverse theorem stated in our introduction (Theorem \ref{thm:inverse}). 
\begin{theorem}[$U^5$-control in the vertical direction]\label{global U5}
Let $f_0, f_1, g : \Z^2 \to \C$ be 1-bounded functions and let $\mu_N$ denote the probability measure\footnote{We normalise convolution on $\Z$ with counting measure, see \eqref{convolution}. We write $x_+ := \max\set{x, 0}$ for the positive part of a real number.}
$$
\mu_N := \trecip{N^2}1_{[N]}*1_{-[N]}= \trecip{N}\brac{1-\tfrac{|\cdot|}{N}}_+.
$$
Write
$$
\Lambda_N(f_0, f_1, g) := \E_{x \in [N]}\E_{y \in [N^2]} f_0(x,y)\sum_d \mu_N(d)f_1(x+d,y) g(x,y+d^2).
$$
If $\abs{\Lambda_N(f_0,f_1, g)} \geq \delta$, then
$$
\E_{x \in [N]}\norm{g_{x}1_{[2N^2]}}_{U^5(\Z)}^{2^5} \gg \delta^{O(1)} \norm{1_{[2N^2]}}_{U^5(\Z)}^{2^5},
$$
where $g_{x}(y):=g(x,y)$ and the $U^5$-norm is defined via \eqref{Us def}.
\end{theorem}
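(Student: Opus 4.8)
\noindent\emph{Strategy.} I will derive the $U^{5}$-control from the PET induction of Bergelson--Leibman \cite{BergelsonLeibmanPolynomial}, combined (in quantitative form) with the concatenation machinery developed in \cite{PelusePrendivilleQuantitative}. Two structural observations keep the argument manageable. First, $x$ should be a spectator: the only Cauchy--Schwarz that touches $x$ is the first one, used to remove $f_0$, and it is applied diagonally, so $x$ is never doubled; every later Cauchy--Schwarz acts on $y$, on $d$, and on auxiliary difference variables only. The whole chain of inequalities therefore bounds $\abs{\Lambda_N}^{2^{k}}$ below by a normalised average over $x\in[N]$ of a multilinear expression in the fibre $g_x(y):=g(x,y)$, and the claimed inequality is exactly the terminal link of this chain. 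Second, $f_0(x,y)$ and $f_1(x+d,y)$ carry the unshifted second coordinate $y$; from the perspective of the $y$-variable they are merely $1$-bounded weights, and the configuration that must be handled in $y$ is the two-point configuration $y,\ y+d^{2}$ dressed with such weights (with the $d$-summation weighted by $\mu_N$ and with $x$ frozen).

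\noindent\emph{Set-up and PET.} Since $\mu_N$ is supported in $(-N,N)$ and the outer average runs over $y\in[N^{2}]$, each nonzero summand has $y+d^{2}\in[2N^{2}]$, so we may insert $1_{[2N^{2}]}(y+d^{2})$ at no cost and work with $g\,1_{[2N^{2}]}$ in the third slot from now on. Cauchy--Schwarz in $(x,y)$ discards the $1$-bounded $f_0$ and gives
\[
\delta^{2}\ \leq\ \E_{x\in[N]}\,\E_{y\in[N^{2}]}\,\biggabs{\sum_{d}\mu_N(d)\,f_1(x+d,y)\,g(x,y+d^{2})1_{[2N^{2}]}(y+d^{2})}^{2}.
\]
The self-convolution shape $\mu_N=\tfrac1{N^{2}}1_{[N]}*1_{-[N]}$ is used here for convenience: it is nonnegative, has mass $1$, is bounded by $1/N$, and unfolds each weight into two variables in $[N]$, which keeps the subsequent Cauchy--Schwarz steps clean (and, as the paragraph before the statement notes, this smooth weight is exactly what is wanted for the popular-difference application downstream). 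One now runs the PET scheme on the polynomial system carried by the surviving factors: $f_1$ shifted by $d$ in the first coordinate and $g$ shifted by $d^{2}$ in the second. Each step is a van der Corput (Cauchy--Schwarz) in $d$ introducing a new difference variable $h_i$ over an interval of length a power of $N$; the attendant replacement of $d^{2}$ by its discrete derivatives $2dh_i+h_i^{2}$, then by lower-degree expressions, makes the PET weight of the system strictly decrease. After the bounded number of steps this takes, the factors descended from $f_0$ and $f_1$ appear with arguments no longer involving the variable currently being doubled and are dropped using $1$-boundedness, leaving a Gowers box norm of $g_x1_{[2N^{2}]}$ in the second coordinate over a box whose sides are generated by the difference variables --- some of interval type, some of multiplicative type such as $\{h_ih_j\}$ --- with $\E_{x\in[N]}$ still on the outside.

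\noindent\emph{Concatenation and conclusion.} The box norm so obtained has enough directions, of the right interval and multiplicative types, for the quantitative concatenation theorem of \cite{PelusePrendivilleQuantitative} to apply, fibrewise in $x$: it turns $\norm{g_x1_{[2N^{2}]}}_{\square}$ into $\norm{g_x1_{[2N^{2}]}}_{U^{5}(\Z)}$ over an interval of length $\asymp N^{2}$, at the cost of polynomial factors in $\delta$ and with the normalisation $\norm{1_{[2N^{2}]}}_{U^{5}(\Z)}^{2^{5}}$ reinstated. As this is a pointwise-in-$x$ comparison of norms, averaging it over $x\in[N]$ and combining with the Cauchy--Schwarz chain yields
\[
\E_{x\in[N]}\bignorm{g_x1_{[2N^{2}]}}_{U^{5}(\Z)}^{2^{5}}\ \gg\ \delta^{O(1)}\,\bignorm{1_{[2N^{2}]}}_{U^{5}(\Z)}^{2^{5}},
\]
which is the assertion.

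\noindent\emph{Main obstacle.} The real work is the PET step: one must track the multi-parameter polynomial system through the differencing, check that the PET weight genuinely decreases at each step and that the process halts with a box norm having precisely the directions the concatenation theorem needs, and verify that every factor other than a shifted copy of $g$ degenerates so that $1$-boundedness may be applied. Fitting the directions of that box norm --- particularly the multiplicative ones, and the mismatch between the $N$-scale of the shift in $x$ and the $N^{2}$-scale of the shift in $y$ --- to the hypotheses of the concatenation theorem is the second and more technical point. Ensuring $x$ stays a spectator and that the closing average over $x$ is legitimate is, by comparison, routine.
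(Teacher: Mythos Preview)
Your proposal is correct and follows essentially the same route as the paper: a PET/van der Corput differencing in $d$ (with $x$ carried as a spectator throughout) reduces $|\Lambda_N|$ to an average over $x$ of a box norm of $g_x 1_{[2N^2]}$ along product-type directions, and then the quantitative concatenation machinery of \cite{PelusePrendivilleQuantitative} (specifically Theorem~5.6 there) upgrades this box norm to the $U^5$-norm fibrewise in $x$. The paper makes the PET chain fully explicit---two shifts $a,b$ at scale $N$ followed by three shifts $h_1,h_2,h_3$ at an auxiliary scale $H$, terminating in the triple difference $\Delta_{2(a+b)h_1,\,2bh_2,\,2ah_3} g_x$---so all three final directions are in fact of the multiplicative type you describe (none are of pure interval type), and it is precisely this shape that feeds into \cite[Theorem~5.6]{PelusePrendivilleQuantitative}.
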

\begin{proof}
Rather than summing $x$ and $y$ over intervals, it is convenient to be able to sum over $\Z$, as this allows us to change variables in a clean manner, without having to keep track of how this affects the range of summation. We may therefore assume that 
$$
\supp(f_0) \subset [N]\times [N^2],\quad \supp(f_1) \subset [-N,2N]\times [N^2],\quad \supp(g) \subset [N]\times [2N^2].
$$
Notice that this affects neither the hypothesis nor the conclusion of the theorem.

Following the procedure of \cite[Lemma 3.3]{PelusePrendivilleQuantitative}, we repeatedly
apply the Cauchy--Schwarz inequality and a change of variables. It is convenient to renormalise by writing $\tilde \mu_N:= N\mu_N$, to give a 1-bounded function with support contained in $[-N,N]$. 

Averaging the $d$ parameter by shifts $d \mapsto d+a$ with $a \in [N]$, and taking into account the support of $\tilde\mu_N$, we have that
$$
\Lambda_N(f_0, f_1, g) = \recip{N^4} \sum_{x,y} f_0(x,y) \sum_{|d|\leq N}\E_{a\in [N]}\tilde \mu_N(d+a) f_1(x+d+a, y) g(x, y+(d+a)^2).
$$
We apply the Cauchy--Schwarz inequality to double the $a$ variable in the above sum. After a further change of variables, recalling that $\Delta_{(a,0)} f_1(x,y) = f_1(x,y)\overline{f_1(x+a,y)}$ as in \eqref{eq:diff} and $\mu_N(a) := N^{-2}\sum_{a_1-a_2 = a} 1_{[N]}(a_1)1_{[N]}(a_2)$ as in \eqref{fejer}, this gives that
\begin{multline*}
\abs{\Lambda_N(f_0, f_1, g) } \leq
\brac{ \recip{N^4} \sum_{x,y} |f_0(x,y)|^2 \sum_{|d|\leq N} 1}^{1/2}\\
\brac{ \recip{N^4}\sum_{x,y,d, a}\mu_N(a)\Delta_a\tilde \mu_N(d)\Delta_{(a,0)}f_1(x, y) g(x-d, y+d^2)\overline{ g(x-d, y+(d+a)^2)}}^{1/2}.
\end{multline*}

We now repeat the above process, averaging $d$ by shifts $d \mapsto d+b$ with $b \in [N]$, then applying Cauchy--Schwarz to double the $b$ variable. After further change of variables, this gives  
\begin{multline*}
\abs{\Lambda_N(f_0, f_1, g) }^4 \ll  \recip{N^4}
\sum_{a,b,x,y,d}\mu_{N}(a)\mu_{N}(b) \Delta_{a,b}\tilde\mu_N(d) g(x,y)\overline{g\bigbrac{x,y+2ad + a^2}}\\ \overline{g\bigbrac{x-b,y+2bd + b^2}}g\bigbrac{x-b,y+2(a+b)d + (a+b)^2}.
\end{multline*}

We continue to iterate this procedure, this time averaging $d$ by shifts $d \mapsto d+h_1$ with $h_1\in [H]$, where $1\leq H \leq N$. Doing so gives
\begin{multline*}
\abs{\Lambda_N(f_0, f_1, g) }^8 \ll 
 \recip{N^4}\sum_{a,b, h_1,x,y,d}\mu_{N}(a,b)\mu_{H}(h_1) \Delta_{a,b,h_1}\tilde\mu_N(d) \overline{\Delta_{(0,2ah_1)}g\bigbrac{x,y+ a^2}}\\ \overline{\Delta_{(0,2bh_1)}g\bigbrac{x-b,y+2(b-a)d + b^2}}\Delta_{(0,2(a+b)h_1)}g\bigbrac{x-b,y+2bd + (a+b)^2},
\end{multline*}
using the notation in~\eqref{multidim fejer}. Another iteration yields
\begin{multline*}
\abs{\Lambda_N(f_0, f_1, g) }^{16} \ll  
\recip{N^4}\sum_{a,b, h_1,h_2,x,y,d}\mu_{N}(a,b)\mu_{H}(h_1,h_2) \Delta_{a,b,h_1,h_2}\tilde \mu_N(d)\\
 \overline{\Delta_{2bh_1,2(b-a)h_2}g_x\bigbrac{y + b^2}}
\Delta_{2(a+b)h_1, 2bh_2}g_x\bigbrac{y+2ad+(a+b)^2}.
\end{multline*}
Introducing an additional average over $h_3 \in [N]$ and changing variables, we have
\begin{multline*}
\abs{\Lambda_N(f_0, f_1, g) }^{16} \ll  
 \recip{N^4}\sum_{a,b, \vh,x,y,d}\mu_{N}(a,b)\mu_{H}(\vh) \E_{h_3\in[H]}\Delta_{a,b,\vh}\tilde \mu_N(d+h_3)\\
 \overline{\Delta_{2bh_1,2(b-a)h_2}g_x\bigbrac{y-2ad + b^2}}
\Delta_{2(a+b)h_1, 2bh_2}g_x\bigbrac{y+2ah_3+(a+b)^2}
\end{multline*}

We next observe that, for $1 \leq h_3\leq H \leq N$, we have
$$
\Delta_{a,b,\vh}\tilde \mu_N(d+h_3)= \Delta_{a,b,\vh}\tilde \mu_N(d) +O\brac{1_{[-2N, 2N]}(d)H/N}.
$$
Hence,
\begin{multline*}
\abs{\Lambda_N(f_0, f_1, g) }^{16} \ll  \frac{H}{N}+
 \recip{N^4}\biggl|\sum_{a,b, \vh,x,y,d}\mu_{N}(a,b)\mu_{H}(\vh) \sum_d\Delta_{a,b,\vh}\tilde \mu_N(d)\\
 \overline{\Delta_{2bh_1,2(b-a)h_2}g_x\bigbrac{y-2ad + b^2}}
\E_{h_3\in[H]}\Delta_{2(a+b)h_1, 2bh_2}g_x\bigbrac{y+2ah_3+(a+b)^2}\biggr| .
\end{multline*}
So, a final application of the Cauchy--Schwarz inequality gives
\begin{equation*}
\abs{\Lambda_N(f_0, f_1, g) }^{32} \ll  
\frac{H^2}{N^2}+\abs{
 \recip{N^3}\sum_{a,b, \vh,x,y}\mu_{N}(a,b)\mu_{H}(\vh)\Delta_{2(a+b)h_1, 2bh_2,2ah_3}g_x\bigbrac{y}} .
\end{equation*}

It follows that if $|\Lambda_N(f_0, f_1, g)| \geq \delta$, then either $H \gg \delta^{16} N$, or there exists a set $S \subset [N]$ of size $|S| \gg \delta^{32} N$ such that, for each $x \in S$, we have
$$
 \sum_{a,b \in (-2N, 2N)}\abs{\sum_h\mu_H(h)\sum_{y} \Delta_{(a+b)h_1,\, bh_2,\, ah_3}g_{x}(y)} \gg \delta^{32} N^4.
$$
For each such $x \in S$, the above gives the same conclusion as the first displayed equation in the proof of \cite[Theorem 5.6]{PelusePrendivilleQuantitative}. We note that, in the notation of \cite[Theorem 5.6]{PelusePrendivilleQuantitative}, we are replacing $N$ with $2N^2$, and taking $M := N$, $q:=1$ and $f:=g_{x}$ (which is a 1-bounded function with support contained in $[2N^2]$). Applying the proof of \cite[Theorem 5.6]{PelusePrendivilleQuantitative}, for each $x \in S$ we deduce that either $N \ll 1$ or
$$
\norm{g_{x}}_{U^5}^{32} \gg \delta^{O(1)} \norm{1_{[2N^2]}}_{U^5}^{32}.
$$
The conclusion of our theorem being trivial in the case that $N \ll 1$, the result follows on summing over $x \in S$.
\end{proof}

As is exploited repeatedly in \cite{PelusePrendivilleQuantitative, PelusePrendivillePolylogarithmic, PeluseBounds}, it is often more convenient to replace arbitrary bounded functions with dual functions, as these functions possess more structure. The next result shows that this is possible in Theorem \ref{global U5}. We eventually use this in \S\ref{sec:inv-thm}.
\begin{corollary}[Vertical $U^5$-control of the dual]\label{cor:dual-U5}
Let $f_0, f_1, g : \Z^2 \to \C$ be 1-bounded functions and let $\mu_N$ denote the probability measure
$$
\mu_N := \trecip{N^2}1_{[N]}*1_{-[N]}= \trecip{N}\brac{1-\tfrac{|\cdot|}{N}}_+.
$$ 
Write
$$
\Lambda_N(f_0, f_1, g) := \E_{x \in [N]}\E_{y \in [N^2]}\sum_d \mu_N(d) f_0(x,y)f_1(x+d,y) g(x,y+d^2).
$$
If $\abs{\Lambda_N(f_0,f_1, g)} \geq \delta$ then, on defining the dual 
\begin{equation}\label{dual defn}
F_{x}(y)=F(x,y) := \sum_d \mu_N(d) f_0(x,y-d^2) f_1(x+d,y-d^2),
\end{equation}
we have 
$$
\E_{x \in [N]}\norm{F_{x}1_{[2N^2]}}_{U^5(\Z)}^{2^5} \gg \delta^{O(1)} \norm{1_{[2N^2]}}_{U^5(\Z)}^{2^5},
$$
where the $U^5$-norm is defined via \eqref{Us def}.
\end{corollary}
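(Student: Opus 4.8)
The plan is to deduce this corollary from Theorem~\ref{global U5} by the standard dualisation manoeuvre: one runs the $U^5$-control theorem a second time, feeding in the conjugate $\overline F$ of the dual function \eqref{dual defn} in place of the arbitrary $1$-bounded function sitting in the third slot. The heuristic is that $F$ packages all the information coming from $f_0$ and $f_1$, while $\Lambda_N(f_0,f_1,\overline F)$ is, up to normalisation, the squared $L^2$-norm of $F$; and this is large precisely because the hypothesis $\abs{\Lambda_N(f_0,f_1,g)}\geq\delta$ says that $g$ correlates with $F$.

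First I would record that $F$ is $1$-bounded, which is immediate since $\mu_N$ is a probability measure and $f_0,f_1$ are $1$-bounded. As in the opening reduction of the proof of Theorem~\ref{global U5}, I may assume $\supp f_0\subset[N]\times[N^2]$, $\supp f_1\subset[-N,2N]\times[N^2]$ and $\supp g\subset[N]\times[2N^2]$ (this does not change $\Lambda_N$), so that \eqref{dual defn} forces $\supp F\subset[N]\times[2N^2]$. The key identity comes from the change of variables $y\mapsto y-d^2$ inside the definition of $\Lambda_N$: for every $1$-bounded $h\colon\Z^2\to\C$,
\[
\Lambda_N(f_0,f_1,h)=\frac{1}{N^3}\sum_{x,y}h(x,y)F(x,y)=2\,\E_{x\in[N]}\E_{y\in[2N^2]}h(x,y)F(x,y),
\]
the factor $2$ arising from passing from the $y$-average over $[N^2]$ to the one over $[2N^2]$.

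Taking $h=g$, the hypothesis gives $\bigabs{\E_{x\in[N]}\E_{y\in[2N^2]}g(x,y)F(x,y)}\geq\delta/2$, whence $\E_{x\in[N]}\E_{y\in[2N^2]}\abs{F(x,y)}^2\geq\delta^2/4$ by Cauchy--Schwarz (using $\abs{g}\leq 1$). Taking $h=\overline F$ then gives $\Lambda_N(f_0,f_1,\overline F)=2\,\E_{x\in[N]}\E_{y\in[2N^2]}\abs{F(x,y)}^2\geq\delta^2/2$. Since $\overline F$ is $1$-bounded, Theorem~\ref{global U5} applied to $\Lambda_N(f_0,f_1,\overline F)$, with $\delta^2/2$ in place of $\delta$, yields
\[
\E_{x\in[N]}\norm{\overline F_x 1_{[2N^2]}}_{U^5(\Z)}^{2^5}\gg\bigbrac{\delta^2/2}^{O(1)}\norm{1_{[2N^2]}}_{U^5(\Z)}^{2^5}\gg\delta^{O(1)}\norm{1_{[2N^2]}}_{U^5(\Z)}^{2^5}.
\]
As the $U^5$-norm is invariant under complex conjugation and $1_{[2N^2]}$ is real-valued, $\norm{\overline F_x 1_{[2N^2]}}_{U^5}=\norm{F_x 1_{[2N^2]}}_{U^5}$, which is exactly the claimed bound. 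I expect no genuine obstacle here; the only point calling for a little care is the bookkeeping in the change of variables --- in particular the $[N^2]$-versus-$[2N^2]$ normalisation and the support of $F$ --- which is precisely what the ``extend all functions to $\Z$'' convention in the proof of Theorem~\ref{global U5} is designed to make painless.
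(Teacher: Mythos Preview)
Your proposal is correct and follows essentially the same route as the paper: rewrite $\Lambda_N(f_0,f_1,h)$ as $\tfrac{1}{N^3}\sum_{x,y}h(x,y)F(x,y)$, use Cauchy--Schwarz to get $\Lambda_N$ large when the third slot is (a conjugate of) $F$, and then invoke Theorem~\ref{global U5}. The only cosmetic difference is that the paper plugs in $(\overline{f}_0,\overline{f}_1,F)$ rather than your $(f_0,f_1,\overline{F})$, thereby avoiding the final appeal to conjugation invariance of the $U^5$-norm.
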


\begin{proof}
We may assume that 
$$
\supp(f_0) \subset [N]\times [N^2],\quad \supp(f_1) \subset [-2N,2N]\times [N^2],\quad \supp(g) \subset [N]\times [2N^2],
$$
so that $\supp(F) \subset [N]\times [2N^2]$. With these assumptions on the support, we may remove the restriction that $(x,y) \in [N]\times[N^2]$ in our counting operator, instead summing over $\Z^2$. Thus
$$
\delta  \leq |\Lambda_N(f_0, f_1, g)| = \recip{N^3}\abs{ \sum_{x,y} F(x,y) g(x,y)}.
$$
The Cauchy--Schwarz inequality then gives
$$
\delta^2 \ll \recip{N^3}\sum_{x,y}  \overline{F(x,y)}F(x,y) = \Lambda_N\brac{\overline{f}_0, \overline{f}_1,F} .
$$
By Theorem \ref{global U5}, we deduce that
$$
\E_{x \in [N]}\norm{F_{x}1_{[2N^2]}}_{U^5(\Z)}^{2^5} \gg \delta^{O(1)} \norm{1_{[2N^2]}}_{U^5(\Z)}^{2^5} .
$$
\end{proof}

\section{Degree lowering}\label{sec:deg-lower}
The key result of this section is Lemma \ref{lem:deg-lower}, which shows that we can replace the $U^5$-norm in Corollary \ref{cor:dual-U5} with the $U^1$-norm, after passing to a long subprogression of small common difference. Before proving this, we begin with a number of technical lemmas that underlie the argument.
\begin{lemma}[Weyl bound]\label{lem:weyl-bound}
Let $\mu_N$ denote the probability measure
$$
\mu_N := \trecip{N^2}1_{[N]}*1_{-[N]}= \trecip{N}\brac{1-\tfrac{|\cdot|}{N}}_+.
$$
For any $\alpha, \beta \in \T$ we have the implication
\begin{multline}\label{eq:weyl-type-2.5}
\abs{\sum_{d} \mu_N(d) e(\alpha d^2 + \beta d)} \geq \delta\\ \implies \brac{\exists q \ll \delta^{-O(1)}}\sqbrac{ \norm{q\alpha}_{\T}\ll \delta^{-O(1)}/N^2 \text{ and } \norm{q\beta}_{\T}\ll \delta^{-O(1)}/N}.
\end{multline}
Furthermore, for any integers $a,b \in \Z$ we have
\begin{equation}\label{eq:weyl-type-3}
\abs{\sum_{d} \mu_N(d+a)\mu_N(d+b) e(\alpha d^2 + \beta d)} \geq \delta/N \implies \brac{\exists q \ll \delta^{-O(1)}}\sqbrac{ \norm{q\alpha}_{\T}\ll \delta^{-O(1)}/N^2}.
\end{equation}
\end{lemma}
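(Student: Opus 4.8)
The plan is to reduce each weighted exponential sum to a genuine quadratic exponential sum over an \emph{interval} of length $\gg\delta N$, and then to invoke the standard quantitative Weyl inequality. Since $\mu_N=\tfrac{1}{N^2}1_{[N]}*1_{-[N]}$, for \eqref{eq:weyl-type-2.5} we may write
\[
\sum_d \mu_N(d)\, e(\alpha d^2+\beta d)=\frac{1}{N^2}\sum_{e\in[N]}\ \sum_{d\,:\,e-d\in[N]} e(\alpha d^2+\beta d),
\]
and for each $e\in[N]$ the inner sum ranges over an interval of length exactly $N$. If the left-hand side has modulus $\geq\delta$, the triangle inequality together with pigeonhole produces an interval $I$ with $|I|=N$ and $\bigabs{\sum_{d\in I}e(\alpha d^2+\beta d)}\geq\delta N$. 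For \eqref{eq:weyl-type-3} we expand \emph{both} factors $\mu_N(d+a)$ and $\mu_N(d+b)$ in the same way, obtaining an average over $e,f\in[N]$ (with prefactor $N^{-4}$) of exponential sums over the intervals $\{d:e-d-a\in[N]\}\cap\{d:f-d-b\in[N]\}$, each of length at most $N$. The hypothesis that the left-hand side is $\geq\delta/N$, combined with this $N^{-4}$ normalisation and the $N^2$ pairs $(e,f)$, forces one such interval $I$ to satisfy $\bigabs{\sum_{d\in I}e(\alpha d^2+\beta d)}\geq\delta N$; since the sum is trivially $\leq|I|$ in modulus, this also gives $|I|\geq\delta N$.

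It then remains to apply the inverse form of Weyl's inequality for quadratic polynomials: if $I$ is an interval of length $L$ and $\bigabs{\sum_{n\in I}e(\alpha n^2+\beta n)}\geq\eta L$, then there is $1\leq q\ll\eta^{-O(1)}$ with $\norm{q\alpha}_{\T}\ll\eta^{-O(1)}L^{-2}$ and $\norm{q\beta}_{\T}\ll\eta^{-O(1)}L^{-1}$. (This is classical: the quadratic-coefficient bound comes from Weyl differencing together with a counting lemma for the number of $h$ with $\norm{2\alpha h}_{\T}$ small, and the linear-coefficient bound follows by restricting to a residue class modulo $q$, on which the quadratic part is controlled, and then applying a geometric-series estimate.) For \eqref{eq:weyl-type-2.5} we use $L=N$ and $\eta=\delta$ to conclude directly. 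For \eqref{eq:weyl-type-3} we use $L=|I|\geq\delta N$ and $\eta=\delta$, so that $\norm{q\alpha}_{\T}\ll\delta^{-O(1)}L^{-2}\leq\delta^{-O(1)}(\delta N)^{-2}=\delta^{-O(1)}N^{-2}$, and we simply discard the information about $\beta$. Throughout we may assume $N\gg\delta^{-O(1)}$, since otherwise the conclusions are vacuous (their right-hand sides exceed $1/2$) and one takes $q=1$.

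Since the quantitative Weyl inequality is entirely standard, there is no serious obstacle here; the only points demanding a little care are the bookkeeping in the pigeonhole reduction (tracking the normalising powers of $N$, and noting that the intersected intervals in \eqref{eq:weyl-type-3} may be short but must then be long, $\geq\delta N$, precisely because the exponential sum over them is large) and, if one wants a self-contained treatment rather than a citation, the derivation of the $\norm{q\beta}_{\T}$ bound, which requires first locating $\alpha$ on a major arc before a second elementary summation.
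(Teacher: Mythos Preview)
Your proposal is correct and follows essentially the same route as the paper: expand the Fej\'er kernel(s) via the convolution identity $\mu_N=\tfrac{1}{N^2}1_{[N]}*1_{-[N]}$, apply the triangle inequality and pigeonhole to reduce to a quadratic exponential sum over a single interval of length $\gg\delta N$, then invoke Weyl's inequality (the paper cites \cite[Lemma~1.1.16]{TaoHigher}).

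One small caveat: your stated form of Weyl, that a large sum $\bigabs{\sum_{n\in I}e(\alpha n^2+\beta n)}\geq\eta L$ over an \emph{arbitrary} interval $I$ of length $L$ forces $\norm{q\beta}_{\T}\ll\eta^{-O(1)}L^{-1}$, is not literally true when $I$ is far from the origin (shifting $n\mapsto n+m$ replaces $\beta$ by $\beta+2\alpha m$). This does not affect your argument, since the $\beta$-conclusion is only needed for \eqref{eq:weyl-type-2.5}, where the intervals $I_e=\{d:e-d\in[N]\}$ lie inside $(-N,N)$; the shift $m$ then satisfies $|m|\leq N$ and the bound $\norm{q\alpha}_{\T}\ll\eta^{-O(1)}N^{-2}$ absorbs $\norm{2q\alpha m}_{\T}$ into the desired estimate for $\norm{q\beta}_{\T}$. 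For \eqref{eq:weyl-type-3} only the $\alpha$-conclusion is required, which is unaffected by shifts.
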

\begin{proof}
Let us prove \eqref{eq:weyl-type-3}, the remaining assertions being similar and simpler.

Expanding the definition of the Fej\'er kernel \eqref{fejer}, we have 
\begin{multline*}
\sum_{d} \mu_N(d+a)\mu_N(d+b) e(\alpha d^2 + \beta d)\\ = N^{-4} \sum_{d_1, d_2, d_3} 1_{[N]}(d_1)1_{[N]}(d_2)1_{[N]}(d_3)1_{[N]}(a-b-d_1+d_2+d_3)\\ e\brac{\alpha\brac{d_1-d_2-a}^2 + \beta\brac{d_1-d_2-a}}\\
\leq N^{-2}\max_{c_1,c_2}\abs{\sum_{d_1} 1_{[N]}(d_1)1_{[N]}(c_1 -d_1)e\brac{\alpha d_1^2 + (\beta-c_2\alpha)d_1}} .
\end{multline*}
Since the final summation in $d_1$ is over a  subinterval of $[N]$, the claimed conclusion follows  from Weyl's inequality, as formulated in \cite[Lemma 1.1.16]{TaoHigher}.
\end{proof}
\begin{lemma}[Shifted quadratic correlations are major arc]\label{lem:dual-major}
Let $f : \Z \to \C$ be a 1-bounded function and let $\mu_N$ denote the probability measure
$$
\mu_N := \trecip{N^2}1_{[N]}*1_{-[N]}= \trecip{N}\brac{1-\tfrac{|\cdot|}{N}}_+.
$$
Suppose that for some $\alpha \in \T$ we have
\begin{equation}\label{eq:lin-shift}
\E_{x\in [N]}\abs{\sum_{ d}\mu_N(d) f(x+d) e\brac{\alpha  d^2}} \geq \delta .
\end{equation}
Then there exists $q \ll \delta^{-O(1)}$ such that $\norm{q\alpha}_{\T} \ll \delta^{-O(1)}/N^2$. Furthermore there exists $\beta \in \T$ such that $\norm{q\beta}_{\T} \ll \delta^{-O(1)}/N$ and
$$
\abs{\sum_{ x \in[-N,2N]} f(x)e(\beta x)}\gg\delta^2 N.
$$

Similarly, if for some $\beta \in \T$ we have
\begin{equation}\label{eq:quad-shift}
\E_{y\in [N^2]}\abs{\sum_{ d}\mu_N(d) f(y+d^2) e\brac{\beta  d}} \geq \delta, 
\end{equation}
then there exists $q \ll \delta^{-O(1)}$ such that
$\norm{q\beta}_{\T} \ll \delta^{-O(1)}/N$ and there exists $\alpha \in \T$ with
$\norm{q\alpha}_{\T} \ll \delta^{-O(1)}/N^2$ such that
$$
\abs{\sum_{ y \in[2N^2]} f(y)e(\alpha y)}\gg\delta^{O(1)} N^2.
$$
\end{lemma}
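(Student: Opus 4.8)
The plan is to treat both statements by the same two-step recipe: first, \emph{linearise} the quadratic phase by an $L^2$-averaging/Cauchy--Schwarz argument that converts a correlation of $f$ with $e(\alpha d^2)$ (resp. $e(\beta d)$) over a Fej\'er-weighted range into a genuine exponential-sum estimate to which Lemma \ref{lem:weyl-bound} applies, giving the modulus control $\norm{q\alpha}_\T \ll \delta^{-O(1)}/N^2$ (resp. $\norm{q\beta}_\T\ll\delta^{-O(1)}/N$); second, \emph{recover the frequency} on $f$ itself by expanding the square and using the $q$-major-arc structure just obtained. For the first statement, I would start from \eqref{eq:lin-shift}, apply Cauchy--Schwarz in $x$ to get $\E_x |\sum_d \mu_N(d) f(x+d)e(\alpha d^2)|^2 \geq \delta^2$, and expand: this produces $\E_x \sum_{d_1,d_2}\mu_N(d_1)\mu_N(d_2) f(x+d_1)\overline{f(x+d_2)} e(\alpha(d_1^2-d_2^2))$. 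Substituting $d_1 = d_2 + h$ and summing $x$ over $[N]$ (using the support localisation of $f$, which we may assume lies in $[-N,2N]$ after the shift, or rather arguing that the $x$-sum of $f(x+d_2)\overline{f(x+d_2+h)}$ is what appears), one is left with an average over $h$ of $|\sum_{d}\mu_N(d+a)\mu_N(d+b)e(2\alpha h d + \alpha h^2)|$-type sums; pigeonholing a single good $h$ and invoking \eqref{eq:weyl-type-3} with $\beta$ there replaced by $2\alpha h$ yields a $q' \ll \delta^{-O(1)}$ with $\norm{q'\alpha}_\T \ll \delta^{-O(1)}/N^2$.

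Once $\alpha$ is pinned to a rational $a/q$ with denominator $q\ll\delta^{-O(1)}$ and the error $\norm{q\alpha}_\T \ll \delta^{-O(1)}/N^2$ controlled, the phase $d\mapsto e(\alpha d^2)$ is \emph{essentially constant on residue classes mod $q$ within the range $|d|\leq N$}: indeed $\alpha d^2$ varies by $O(\delta^{-O(1)})$ as $d$ ranges over $[-N,N]$, so splitting $d$ into residue classes mod $q$ and then into $O(\delta^{-O(1)})$ subintervals on which $\alpha d^2$ is within $o(1)$ of a constant, a pigeonhole in \eqref{eq:lin-shift} (after removing the absolute value at the cost of a phase, i.e.\ choosing for each $x$ a sign/unimodular $c_x$) gives a sub-progression $P$ of $[-N,N]$ of common difference $q$ and length $\gg \delta^{O(1)} N/q \gg \delta^{O(1)}N$ such that $\E_{x}|\sum_{d\in P} f(x+d)| \gg \delta^{O(1)}$. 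Writing $1_P$ as (a modulation of) a sum of a bounded number of Fej\'er-type kernels, or more simply applying the standard fact that a long AP has Fourier transform concentrated on a major arc near $0$ of width $\ll 1/(q N)$, and then undoing the $x$-average by Cauchy--Schwarz / Plancherel, one extracts a frequency $\beta$ with $\norm{q\beta}_\T \ll \delta^{-O(1)}/N$ at which $|\sum_{x\in[-N,2N]} f(x) e(\beta x)| \gg \delta^{2} N$. The second statement is handled identically with the roles of the scales $N$ and $N^2$ interchanged: from \eqref{eq:quad-shift} one Cauchy--Schwarzes in $y$, expands, and uses \eqref{eq:weyl-type-2.5} applied to the phase in $d$ (now linear, $e(\beta d)$, composed with the change of variables $y\mapsto y+d^2$) to get $\norm{q\beta}_\T\ll\delta^{-O(1)}/N$ and $\norm{q\alpha}_\T \ll \delta^{-O(1)}/N^2$ simultaneously, then recovers the frequency $\alpha$ on $f$ over $[2N^2]$ by the same major-arc pigeonholing argument, this time producing the bound $\gg \delta^{O(1)} N^2$.

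The step I expect to be the main obstacle is the \emph{linearisation via Cauchy--Schwarz}: after squaring and expanding \eqref{eq:lin-shift} one must carefully track how the supports of $f$, the Fej\'er weights $\mu_N$, and the ranges of $x$ and $h$ interact so that the resulting inner sum genuinely matches the hypothesis of \eqref{eq:weyl-type-3} (in particular that the $d$-sum is weighted by a product $\mu_N(d+a)\mu_N(d+b)$ rather than by some other kernel), and so that the pigeonholed $h$ may be taken of size $\gg\delta^{O(1)}N$ — one cannot allow $h=0$, which contributes a trivial main term, so the diagonal must be separated off and shown to be negligible, or the averaging arranged to avoid it. The second potential snag is purely bookkeeping: propagating the two different scales ($1/N$ versus $1/N^2$) consistently through the change of variables $y \mapsto y+d^2$ in \eqref{eq:quad-shift}, so that the \emph{same} $q$ works for both $\norm{q\alpha}_\T$ and $\norm{q\beta}_\T$; this should follow by taking a common denominator, at the cost of replacing $q$ by $q^{O(1)}$, which is still $\ll\delta^{-O(1)}$.
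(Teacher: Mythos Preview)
Your linearisation step has a genuine gap. After Cauchy--Schwarz in $x$ and the substitution $d_1=d_2+h$, the phase in the surviving $d$-sum is $e(2\alpha h d+\alpha h^2)$, which is \emph{linear} in $d$; the quadratic coefficient is $0$, not $\alpha$. Invoking \eqref{eq:weyl-type-3} therefore yields only the vacuous statement $\|q\cdot 0\|_\T\ll\delta^{-O(1)}/N^2$. You could instead argue that a large linear sum forces $\|2\alpha h\|_\T\ll\delta^{-O(1)}/N$ for $\gg\delta^{O(1)}N$ values of $h$, and then bootstrap (via a second difference or a Vinogradov-type sum $\sum_h\min(N^{-1},N^{-2}\|2\alpha h\|^{-1})$) to upgrade this to $\|q\alpha\|_\T\ll\delta^{-O(1)}/N^2$; this works, but it is not the argument you described, and one must also decouple the $f$-factors from the $d$-sum before any of this applies. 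Your second step (residue classes, subintervals, pigeonhole) would then at best yield $\gg\delta^{O(1)}N$, not the sharp $\gg\delta^2 N$ in the statement.

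The paper avoids all of this by never squaring. It removes the absolute value in \eqref{eq:lin-shift} via a unimodular phase $g(x)$, recognises the resulting expression as $\int_\T \widehat{g1_{[N]}}(\beta)\,\widehat{f1_{[-N,2N]}}(-\beta)\,\hat h(\beta)\,d\beta$ with $h(d)=\mu_N(d)e(\alpha d^2)$, and applies H\"older so as to isolate $\bigl\|\widehat{f1_{[-N,2N]}}\,\hat h\bigr\|_\infty$. This produces a single $\beta$ at which \emph{both} $|\hat f(-\beta)|\gg\delta^2 N$ and $|\sum_d\mu_N(d)e(\alpha d^2+\beta d)|\gg\delta^2$; the latter is exactly the hypothesis of \eqref{eq:weyl-type-2.5}, which controls $\alpha$ at scale $N^{-2}$ and $\beta$ at scale $N^{-1}$ with the same $q$, in one stroke. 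For \eqref{eq:quad-shift} the identical Fourier--H\"older recipe applies with the Weyl sum $S_N(\alpha,\beta)=\sum_d\mu_N(d)e(\alpha d^2+\beta d)$ now depending on the Fourier variable $\alpha$; since $\|S_N(\cdot,\beta)\|_2$ is too large to close the estimate, the paper uses a sixth-moment bound for $S_N$ and a correspondingly asymmetric H\"older split.
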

\begin{proof}
  Write $h(d) := \mu_N(d) e(\alpha d^2)$. Our assumption \eqref{eq:lin-shift} implies that
  there exists a phase function $g : \Z \to \set{z \in \C : |z| =1}$ such that
$$
\sum_{x,d} g(x)1_{[N]}(x)f(x+d)1_{[-N,2N]}(x+d) h(d) \geq \delta N.
$$
By orthogonality and H\"older's inequality, we have
\begin{multline*}
\delta N \leq \int_\T \widehat{g1}_{[N]}(\beta)\widehat{f1}_{[-N,2N]}(-\beta) \hat{h}( \beta)\intd \beta\\
 \leq \bignorm{\widehat{g1}_{[N]}}_2\bignorm{\widehat{f1}_{[-N,2N]}}_2^{1/2}\bignorm{\hat{h}}_2^{1/2}\bignorm{\widehat{f1}_{[-N,2N]}\hat{h}}_\infty^{1/2}\\
 \leq\norm{1_{[N]}}_2\norm{1_{[-N,2N]}}_2^{1/2}\norm{\mu_N}_2^{1/2}\bignorm{\widehat{f1}_{[-N,2N]}\hat{h}}_\infty^{1/2} .
\end{multline*}
Hence there exists $\beta \in \T$ such that
$$
\biggabs{\sum_{d}\mu_N(d)e(\alpha d^2 + \beta d)} \gg \delta^2 \quad\text{and}\quad \abs{\sum_{ x \in[-N,2N]} f(x)e(\beta x)}\gg\delta^2 N.
$$
The result now follows from the Weyl bound (Lemma \ref{lem:weyl-bound}).

Next we turn to \eqref{eq:quad-shift}. This implies that there exists a phase function $g : \Z \to \set{z \in \C : |z| =1}$ such that
$$
\sum_{y,d} g(y)1_{[N^2]}(y)f(y+d^2)1_{[2N^2]}(y+d^2) \mu_N(d)e(\beta d) \geq \delta N^2.
$$
Write $S_N(\alpha, \beta) := \sum_d \mu_N(d) e(\alpha d^2+\beta d)$. By orthogonality and H\"older's inequality, we have
\begin{multline*}
\delta N^2 \leq \int_\T \widehat{g1}_{[N^2]}(\alpha)\widehat{f1}_{[2N^2]}(-\alpha)S_N(\alpha, \beta)\intd \alpha\\
 \leq \bignorm{\widehat{g1}_{[N^2]}}_2\bignorm{\widehat{f1}_{[2N^2]}}_2^{5/7}\norm{S(\cdot,\beta)}_6^{6/7}\bignorm{\bigabs{\widehat{f1}_{[2N^2]}}^{2/7}\abs{S(\cdot, \beta)}^{1/7}}_\infty.
\end{multline*}

Employing a standard estimate for the sixth moment of a quadratic exponential sum (see, for instance, \cite[Lemma 6.4]{PelusePrendivillePolylogarithmic}), there exists $\alpha \in \T$ such that
$$
\biggabs{\sum_{d}\mu_N(d)e(\alpha d^2 + \beta d)} \gg \delta^7 \quad\text{and}\quad \abs{\sum_{ x \in[2N^2]} f(x)e(\alpha x)}\gg\delta^{7/2} N^2.
$$
The result now follows from the Weyl bound (Lemma \ref{lem:weyl-bound}).
\end{proof}
The following simple observation is used repeatedly in the remainder of the paper, and can be regarded as the key ``reason'' why Theorem \ref{thm:1D-pop-diff} has exponential bounds, rather than the tower-type bounds encountered in the popular version of the linear Roth theorem \cite{FPZTower}. It essentially allows us to show that we can pass from \textit{many} ``major arc'' correlations to  a \textit{single} correlation: the major arcs line up. The reason underlying this is that the major arcs form a tiny fraction of all possible frequencies. 
\begin{lemma}[Pigeon-holing major arcs]\label{lem:PH-major}
Let $f_1, \dots, f_M : [-N,N] \to \C$ be 1-bounded functions. For a fixed positive integer $k$, let $\alpha,\alpha_1, \dots, \alpha_M \in \T$ be frequencies for which there are positive integers  $q_1, \dots, q_M \leq Q$ such that $\norm{q_m(\alpha_m-\alpha)}_{\T} \leq Q/N^k$ for each $m$. Suppose that
$$
\E_{m\in [M]}\abs{\E_{|x|\leq N} f_m(x)e(\alpha_m x^k)} \geq 1/Q.
$$
Then there exists $m_0 \in [M]$ such that
$$
\E_{m\in [M]}\abs{\E_{|x|\leq N} f_{m}(x)e(\alpha_{m_0} x^k)} \gg 1/Q^{O(1)}.
$$
Moreover, if $\alpha_1, \dots, \alpha_M$ are $\recip{ QN^k}$-separated\footnote{By which we mean that, for each $i$ and $j$, either $\alpha_i = \alpha_j$ or $\norm{\alpha_i-\alpha_j}\geq \recip{QN^k}$.}, then we can ensure that the $m_0 \in [M]$ found above satisfies
$$
\hash\set{m \in [M] : \alpha_{m} = \alpha_{m_0}}  \gg MQ^{-O(1)}.
$$
\end{lemma}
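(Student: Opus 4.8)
The plan is to exploit that the frequencies $\alpha_m$ all lie in a union of short ``major arcs'' around $\alpha$, and that this union can be covered by $Q^{O(1)}$ intervals of length $\ll 1/(QN^k)$ --- precisely the resolution at which the functions $x\mapsto e(\beta x^k)$ on $|x|\le N$ become essentially constant. The input from this last point is the elementary Lipschitz bound: if $\beta,\beta'\in\R$ satisfy $|\beta-\beta'|\le \tfrac{1}{8\pi QN^k}$, then for every integer $x$ with $|x|\le N$,
\[
\bigabs{e(\beta x^k)-e(\beta' x^k)}\le 2\pi|\beta-\beta'|\,|x|^k\le 2\pi\cdot\tfrac{1}{8\pi QN^k}\cdot N^k=\tfrac{1}{4Q},
\]
using $|x|^k\le N^k$. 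Next I would locate the arcs: from $q_m\le Q$ and $\norm{q_m(\alpha_m-\alpha)}_{\T}\le Q/N^k$, each $\alpha_m$ lies in
\[
\mathfrak{M}:=\bigcup_{1\le q\le Q}\ \bigcup_{0\le a<q}\Bigl(\alpha+\tfrac{a}{q}+\bigl[-\tfrac{Q}{qN^k},\tfrac{Q}{qN^k}\bigr]\Bigr),
\]
which is contained in a union of at most $Q^2$ arcs, each of length at most $2Q/N^k$. Covering each such arc by intervals of length $\tfrac{1}{8\pi QN^k}$ costs only $O(Q^2)$ intervals per arc --- crucially the factor $N^k$ cancels, so this bound is uniform in $N$ --- so $\mathfrak{M}$ is a union of $R=O(Q^4)$ intervals $I_1,\dots,I_R$, each of length at most $\tfrac{1}{8\pi QN^k}$.

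Now comes the pigeonholing. Put $\mathcal{B}:=\set{m\in[M]:\abs{\E_{|x|\le N}f_m(x)e(\alpha_m x^k)}\ge \tfrac{1}{2Q}}$; since each summand is $1$-bounded, the hypothesis gives $|\mathcal{B}|\ge \tfrac{M}{2Q}$ by a routine averaging argument. Since $\alpha_m\in\mathfrak{M}=\bigcup_r I_r$ for each $m\in\mathcal{B}$, some $I_r$ contains $\alpha_m$ for a subset $\mathcal{M}\subseteq\mathcal{B}$ with $|\mathcal{M}|\ge|\mathcal{B}|/R\gg M/Q^5$. Fix any $m_0\in\mathcal{M}$. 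For every $m\in\mathcal{M}$ the frequencies $\alpha_m,\alpha_{m_0}$ have representatives inside the interval $I_r$ of length $\le\tfrac{1}{8\pi QN^k}$, so the Lipschitz bound and $|f_m|\le 1$ give
\[
\Bigabs{\E_{|x|\le N}f_m(x)e(\alpha_{m_0}x^k)}\ge \Bigabs{\E_{|x|\le N}f_m(x)e(\alpha_m x^k)}-\tfrac{1}{4Q}\ge\tfrac{1}{2Q}-\tfrac{1}{4Q}=\tfrac{1}{4Q}.
\]
Averaging over $m\in[M]$ and using $|\mathcal{M}|\gg M/Q^5$ yields $\E_{m\in[M]}\abs{\E_{|x|\le N}f_m(x)e(\alpha_{m_0}x^k)}\gg Q^{-6}$, which is the first assertion. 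For the ``moreover'', observe that each $I_r$ has length $<\tfrac{1}{QN^k}$, so if the $\alpha_m$ are $\tfrac{1}{QN^k}$-separated then any two of them lying in a common $I_r$ must coincide; hence $\alpha_m=\alpha_{m_0}$ for all $m\in\mathcal{M}$, and $\hash\set{m\in[M]:\alpha_m=\alpha_{m_0}}\ge|\mathcal{M}|\gg MQ^{-O(1)}$ with the same $m_0$.

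At heart this is one application of the pigeonhole principle; the only point needing care is bookkeeping the interplay of the two scales --- arc width $\sim Q/N^k$ against resolution $\sim 1/(QN^k)$ --- and verifying that the covering count $O(Q^4)$ and the phase error $\tfrac{1}{4Q}$ genuinely come with absolute constants, independent of $N$. I do not anticipate any substantial obstacle.
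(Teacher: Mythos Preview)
Your proof is correct and follows essentially the same approach as the paper: both cover the major arcs by $Q^{O(1)}$ cells of width $\asymp 1/(QN^k)$, pigeonhole the popular indices into one cell, and then use the Lipschitz bound for $x\mapsto e(\beta x^k)$ to replace each $\alpha_m$ by the common $\alpha_{m_0}$. The only cosmetic difference is that the paper rounds each $\alpha_m$ to a grid $t/T$ with $T=100QN^k$ (obtaining $O(Q^3)$ cells via a volume-packing count) whereas you cover $\mathfrak{M}$ directly by $O(Q^4)$ short intervals; both yield the required $Q^{-O(1)}$ bounds.
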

\begin{proof}
By the popularity principle, there exists a set $\mathcal{M} \subset [M]$ with $|\mathcal{M}| \geq  \frac{M}{2Q}$  such that for each $m \in \mathcal{M}$ we have
\begin{equation}\label{eq:mult-freq}
\abs{\E_{|x|\leq N} f_m(x)e(\alpha_m x^k)} \geq \tfrac{1}{2Q}.
\end{equation}
Set $T := 100QN^k$. If $\alpha_1, \dots, \alpha_M$ are $1/(QN^k)$-separated, then set $\tilde \alpha_m := \alpha_m$; otherwise, we round each $\alpha_m$ to the nearest fraction of the form $t/T$. In either case, we obtain a $1/T$-separated set $\tilde \alpha_1, \dots, \tilde \alpha_M$ contained in the shifted union
$$
\alpha+ \bigcup_{1 \leq a \leq q \leq Q}   \sqbrac{\tfrac{a}{q}- \tfrac{2Q}{ qN^k},  \tfrac{a}{q}+\tfrac{2Q}{ qN^k}}.
$$
Write $D$ for the number of distinct $\tilde \alpha_m$ lying in the above set. Since the $\tilde \alpha_m$ are $1/T$-separated, a volume packing argument gives
$$
D \leq \sum_{q\leq Q}q(1+\tfrac{4QT}{qN^k})\ll Q^2\brac{1+\tfrac{T}{N^k}} \ll Q^3 .
$$  
Hence, by the pigeon-hole principle, there exists $\tilde\alpha_{m_0}$ such that
$$
\hash\set{m \in \mathcal{M} : \tilde\alpha_{m} = \tilde\alpha_{m_0}} \gg |\mathcal{M}|Q^{-3} \gg MQ^{-4}.
$$

Our choice of $T$ ensures that, for each $m$ with $\tilde \alpha_{m} = \tilde \alpha_{m_0}$, the frequencies $\alpha_{m}$ and $\alpha_{m_0}$ are sufficiently close to ensure that 
\begin{equation}\label{eq:mult-freq}
\abs{\E_{|x|\leq N} f_{m}(x)e(\alpha_{m_0} x^k)} \gg 1/Q.
\end{equation}
Summing over $m\in [M]$ with $\tilde \alpha_{m} = \tilde \alpha_{m_0}$, we deduce that 
$$
\E_{m\in [M]} \abs{\E_{|x|\leq N} f_{m}(x)e(\alpha_{m_0} x^k)} \gg 1/Q^5.
$$
\end{proof}
Our penultimate technical lemma before proving Lemma \ref{lem:deg-lower} is essentially the main argument underlying Lemma \ref{lem:deg-lower}, but carried out in a notationally simpler context.
\begin{lemma}[Shifted quadratic correlations conspire]\label{lem:duals-conspire}
Let $f_1, \dots, f_M : \Z \to \C$ be 1-bounded functions and let $\mu_N$ denote the probability measure
$$
\mu_N := \trecip{N^2}1_{[N]}*1_{-[N]}= \trecip{N}\brac{1-\tfrac{|\cdot|}{N}}_+.
$$
Let $T$ be a positive integer satisfying $T \leq \delta^{-1} N^2$. Suppose that there are functions $\phi_1, \dots, \phi_M : \Z \to \set{t/T : t \in [T]}$ and a set $\mathcal{N} \subset [M]\times [N]$ of size at least $ \delta MN$ such that for each $(m,x) \in \mathcal{N}$ we have
\begin{equation}\label{eq:large-dual-fourier}
\abs{\sum_{ d} \mu_N(d) f_m(x+d) e\brac{\phi_m(x)  d^2}} \geq \delta  .
\end{equation}
Then there exists $q \ll \delta^{-O(1)}$ and $\phi \in \T$ with $\norm{q\phi}_{\T}\ll \delta^{-O(1)}/N^2$ such that 
$$
\hash\set{(m,x) \in \mathcal{N} : \phi_m(x) = \phi} \gg \delta^{O(1)} M N.
$$
\end{lemma}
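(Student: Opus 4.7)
The plan is to combine Lemma \ref{lem:dual-major} with a discrete pigeonhole over major-arc points in $\set{t/T : t \in [T]}$, in the spirit of Lemma \ref{lem:PH-major}. The key point is that the discreteness of the range of each $\phi_m$, together with the bound $T \leq \delta^{-1} N^2$, forces the ``major-arc'' values of $\phi$ in $\set{t/T : t \in [T]}$ to number only $\ll \delta^{-O(1)}$.

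First, I would partition $\mathcal{N}$ according to the value of $\phi_m(x)$: for each pair $(m, \phi)$ with $\phi \in \set{t/T : t \in [T]}$, set $S_{m, \phi} := \set{x : (m, x) \in \mathcal{N},\ \phi_m(x) = \phi}$, so that $\sum_{m, \phi} |S_{m, \phi}| = |\mathcal{N}| \geq \delta MN$. For each such pair, summing the hypothesis \eqref{eq:large-dual-fourier} over $x \in S_{m, \phi}$ and then averaging over $x \in [N]$ gives
\[
\E_{x \in [N]} \Bigabs{\sum_{d} \mu_N(d) f_m(x+d) e\brac{\phi d^2}} \geq \frac{\delta |S_{m, \phi}|}{N}.
\]
Whenever this average is at least $\delta^{c}$ for a fixed constant $c$ (i.e., whenever $|S_{m, \phi}| \geq \delta^{c-1} N$), Lemma \ref{lem:dual-major} applied to $f_m$ with frequency $\phi$ asserts that $\phi$ lies on the major arcs: there exist $q \ll \delta^{-O(1)}$ with $\norm{q\phi}_{\T} \ll \delta^{-O(1)}/N^2$.

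Next, I would bound the number of such ``major-arc'' values in the discrete set $\set{t/T : t \in [T]}$. The major arcs with denominator $q \leq Q := \delta^{-O(1)}$ form a union of $O(Q^2)$ intervals, each of width $O(Q/(qN^2))$. A volume-packing argument analogous to the proof of Lemma \ref{lem:PH-major}, combined with the $1/T$-spacing of $\set{t/T}$ and the constraint $T \leq \delta^{-1} N^2$, gives
\[
\sum_{q \leq Q} q \Bigbrac{1 + \tfrac{2QT}{qN^2}} \ll Q^2 \brac{1 + \tfrac{T}{N^2}} \ll \delta^{-O(1)}
\]
major-arc points of $\set{t/T}$ in total. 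A final pigeonhole on these $\ll \delta^{-O(1)}$ values then produces a single $\phi$ shared by $\gg \delta^{O(1)} MN$ pairs $(m, x) \in \mathcal{N}$, giving the conclusion.

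\emph{The main obstacle} is the balancing between ``substantial'' pairs $(m, \phi)$ (for which Lemma \ref{lem:dual-major} gives a nontrivial major-arc conclusion) and the ``sparse'' pairs with $|S_{m, \phi}|$ below the threshold $\delta^{c-1} N$, whose collective mass must be shown to make up less than a constant fraction of $|\mathcal{N}|$. Here one must exploit the bound $\#\set{(m, \phi) : S_{m, \phi} \neq \emptyset} \leq \min(MT, |\mathcal{N}|)$, together with the crucial inequality $T \leq \delta^{-1} N^2$, in a manner that mirrors the $1/(QN^k)$-separation hypothesis of Lemma \ref{lem:PH-major}; arranging the polynomial exponents so that the argument closes uniformly in $N$ is the principal technical challenge.
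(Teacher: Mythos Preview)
Your approach has a genuine gap, and the ``main obstacle'' you identify cannot be overcome by the bookkeeping you suggest. The hypothesis \eqref{eq:large-dual-fourier} at a \emph{single} pair $(m,x)$ does not force $\phi_m(x)$ to be major arc: it is the interaction across many $x$ sharing the same $f_m$ that creates structure. Your strategy only accesses this interaction through Lemma~\ref{lem:dual-major}, which needs $\E_{x\in[N]}|\cdots|\geq\eta$ for some threshold $\eta$, i.e.\ $|S_{m,\phi}|\geq\eta N/\delta$. But nothing in the hypotheses prevents the values $\phi_m(x)$ from being pairwise distinct across $x\in\mathcal N_m$ for every $m$, so that every $|S_{m,\phi}|=1$. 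Your proposed bound $\#\{(m,\phi):S_{m,\phi}\neq\emptyset\}\leq|\mathcal N|$ then gives only $(\text{sparse mass})\leq|\mathcal N|\cdot\eta N/\delta$; forcing this below $|\mathcal N|/2$ requires $\eta\ll\delta/N$, whereupon Lemma~\ref{lem:dual-major} returns $q\ll(N/\delta)^{O(1)}$, which is worthless. The constraint $T\leq\delta^{-1}N^2$ is no help here, since $MT$ is typically far larger than $|\mathcal N|$, so $\min(MT,|\mathcal N|)=|\mathcal N|$.

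The paper circumvents this by extracting the interaction across $x$ \emph{before} invoking Lemma~\ref{lem:dual-major}. For fixed $m$ with $|\mathcal N_m|\gg\delta N$, it applies Cauchy--Schwarz twice in the $d$-variable to the average of \eqref{eq:large-dual-fourier} over $x\in\mathcal N_m$, eliminating $f_m$ entirely and producing a pure quadratic exponential sum in a new variable $h$ with leading coefficient $\phi_m(n)-\phi_m(\tilde n)$, for many pairs $(n,\tilde n)\in\mathcal N_m^2$. The Weyl bound (Lemma~\ref{lem:weyl-bound}) forces these \emph{differences} to be major arc, so the $\phi_m(x)$ cluster near a common shift $\alpha_m$. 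Only now do the $1/T$-separation and $T\leq\delta^{-1}N^2$ enter, via Lemma~\ref{lem:PH-major}, to pigeonhole onto a single value $\phi_m$ within each fibre; Lemma~\ref{lem:dual-major} then applies with an honest $\delta^{O(1)}$ average to show $\phi_m$ itself is major arc, and a second application of Lemma~\ref{lem:PH-major} over $m$ finishes. The double Cauchy--Schwarz step is the missing idea in your proposal.
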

\begin{proof}
We first show that, for fixed $m$, many of the $\phi_m(x)$ lie in the same shift of the major arcs. 

By the popularity principle, there exists a set $\mathcal{M} \subset [M]$ with $|\mathcal{M}| \gg \delta M$  such that for each $m \in \mathcal{M}$ the fibre $\mathcal{N}_m := \set{x \in [N] : (m, x) \in \mathcal{N}}$ satisfies $|\mathcal{N}_m|\gg \delta N$. Summing over $\mathcal{N}_m$, we deduce that
$$
\E_{x \in \mathcal{N}_m}\abs{\sum_{ d} \mu_N(d)f_m(x+d) e\brac{\phi_m(x)  d^2}} \geq \delta  .
$$
Applying the Cauchy--Schwarz inequality to double the $d$ variable and then
re-parametrising gives
\begin{multline*}
\delta^2   \leq \E_{x \in \mathcal{N}_m}\sum_{ d, \tilde d } \mu_N(d)\mu_N(\tilde d) f_m(x+d)\overline{f_m(x+\tilde d)} e\brac{\phi_m(x)  \sqbrac{d^2-\tilde d^2}} \\
\ll \recip{\delta N}\sum_{|x|<2N}\sum_{|h| < 2N}f_m(x)\overline{f_m(x+h)} \sum_{d}\Delta_h\mu_N(d) 1_{\mathcal{N}_m}(x-d)e\brac{-\phi_m(x-d)  \sqbrac{2hd+h^2}}.
\end{multline*}

We again apply the Cauchy--Schwarz inequality to double the $d$ variable, yielding
\begin{align*}
\delta^6 & \ll 
\sum_{x,h,d,\tilde d} \Delta_h\mu_N(d)\Delta_h\mu_N(\tilde d)1_{\mathcal{N}_m}\bigbrac{x-d}1_{\mathcal{N}_m}\bigbrac{x-\tilde d\, }\\
&\qquad \times e\Bigbrac{\sqbrac{\phi_m(x-d)-\phi_m\bigbrac{x-\tilde d\, }}h^2+\phi_m(x-d)2hd-\phi_m\bigbrac{x-\tilde d\, }2h\tilde d}\\
&\leq 
\sum_{x,d,\tilde d} \mu_N(d)\mu_N(\tilde d)1_{\mathcal{N}_m}\bigbrac{x-d}1_{\mathcal{N}_m}\bigbrac{x-\tilde d\, }\biggl|\sum_h \mu_N(d+h)\mu_N(\tilde d + h) \\
&\qquad \times e\Bigbrac{\sqbrac{\phi_m(x-d)-\phi_m\bigbrac{x-\tilde d\, }}h^2+\phi_m(x-d)2hd-\phi_m\bigbrac{x-\tilde d\, }2h\tilde d}\biggr|.
\end{align*}
Next, we change variables in the above sum, substituting $d = x-n$ and $\tilde d = x-\tilde n$. We also use the pointwise bound $\mu_N \leq 1/N$, together with the fact that the only values of $x$ that contribute to the above sum lie in the interval $[-2N, 2N]$. Thus taking a maximum over $x$ gives some $x_0$ such that
\begin{align*}
\delta^6 N & \ll  
\sum_{n,\tilde n} 1_{\mathcal{N}_m}(n)1_{\mathcal{N}_m}(\tilde n)\biggl|\sum_h \mu_N(x_0-n+h)\mu_N(x_0-\tilde n + h) \\
&\qquad \times e\Bigbrac{\sqbrac{\phi_m(n)-\phi_m(\tilde n)}h^2+\phi_m(n)2h(x_0-n)-\phi_m(\tilde n)2h(x_0-\tilde n)}\biggr|.
\end{align*}
Taking a maximum over $\tilde n$ and using the popularity principle in $n$, there exists $\mathcal{N}_m'\subset \mathcal{N}_m$ with $|\mathcal{N}_m'| \gg \delta^6 N$ and there exists an integer $y_0$ along with a frequency $\alpha_m$ such that for each  $n \in \mathcal{N}_m'$ there exists an integer $y_n$ and a frequency $\beta_m(n)\in \T$ satisfying
\begin{equation*}
\frac{\delta^6}{N} \ll \biggl|\sum_h \mu_N(y_n+h)\mu_N(y_{0} + h) 
 e\Bigbrac{\sqbrac{\phi_m(n)-\alpha_m}h^2+\beta_m(n)h}\biggr|.
\end{equation*}
By the Weyl bound (Lemma \ref{lem:weyl-bound}), for each $n \in \mathcal{N}_m'$ there exists $q_{m,n} \ll \delta^{-O(1)}$ with 
$$
\norm{q_{m,n}\sqbrac{\phi_m(n)-\alpha_m}}_{\T}\ll \delta^{-O(1)}/N^2.
$$
Whilst summing \eqref{eq:large-dual-fourier} over $n \in \mathcal{N}_m'$ gives
$$
\sum_{n \in [N]}\abs{\sum_{ d} \mu_N(d) f_m(n+d) e\brac{\phi_m(n)  d^2}}\gg \delta^{O(1)} N.
$$

Applying Lemma \ref{lem:PH-major}, for each $m \in \mathcal{M}$ there exists $n_m \in \mathcal{N}'_m$ such that on setting $\phi_m := \phi_m(n_m)$ we have
$$
\hash\set{n \in \mathcal{N}_m : \phi_m(n) = \phi_m} \gg \delta^{O(1)} N.
$$
In particular
$$
\E_{x \in [N]}\abs{\sum_{ d}\mu_N(d) f_m(x+d) e\brac{\phi_m  d^2}} \gg \delta^{O(1)} .
$$
Applying Lemma \ref{lem:dual-major}, there exists $q \ll \delta^{-O(1)}$ such that $\norm{q\phi_m}_{\T} \ll \delta^{-O(1)}/N^2$. Re-applying Lemma \ref{lem:PH-major}  gives some $\phi = \phi_{m_0}$ such that
$$
\hash\set{m\in \mathcal{M} : \phi_m = \phi} \gg \delta^{O(1)} M
$$
Hence
$$
\hash\set{(m, x)\in \mathcal{N} : \phi_m(x) = \phi} \geq \sum_{m \in \mathcal{M}: \phi_m = \phi} \hash\set{x \in \mathcal{N}_m : \phi_m(x) = \phi_m} \gg \delta^{O(1)} MN.
$$
\end{proof}
We are now in a position to prove our degree lowering lemma.
\begin{lemma}[Degree lowering]\label{lem:deg-lower}
Let $f_0, f_1 : \Z \to \C$ be 1-bounded functions with 
$$
\supp(f_0) \subset [N]\times [N^2] \quad \text{and} \quad \supp(f_1) \subset [-N, 2N] \times [N^2]
$$
Let $\mu_N$ denote the probability measure
$$
\mu_N := \trecip{N^2}1_{[N]}*1_{-[N]}= \trecip{N}\brac{1-\tfrac{|\cdot|}{N}}_+.
$$ 
Define the dual function 
$$
F_{x}(y) = F(x,y) := \sum_d\mu_N(d) f_0(x,y-d^2) f_1(x+d,y-d^2),
$$
a 1-bounded function supported on $[N]\times [2N^2]$.  Suppose that for some $s \geq 2$ we have
$$
\E_{x\in [N]}\norm{F_{x}}_{U^s(\Z)}^{2^s} \geq \delta \norm{1_{[2N^2]}}_{U^s(\Z)}^{2^s},
$$
where the $U^s$-norm is given by \eqref{Us def}. Then, either $N \ll_s \delta^{-O_s(1)}$, or there exists $\alpha \in \T$ and $q \ll_s \delta^{-O_s(1)}$ such that $\norm{q\alpha}_{\T} \ll_s\delta^{-O_s(1)}/N^2$ and 
\begin{equation}\label{eq:major-arc-correlation}
\E_{x\in [N]}\E_{y\in [N^2]}\abs{\sum_{d}\mu_N(d) f_1(x+d, y) e\brac{\alpha d^2}} \gg_s \delta^{O_s(1)} .
\end{equation}
\end{lemma}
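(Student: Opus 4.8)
The plan is to run the degree-lowering induction on $s$, the base case $s=2$ being essentially Lemma \ref{lem:duals-conspire} combined with Lemma \ref{lem:dual-major}. So suppose $s \geq 3$ and that the conclusion is known for $s-1$. Starting from the hypothesis $\E_{x\in[N]}\norm{F_x}_{U^s(\Z)}^{2^s} \geq \delta\norm{1_{[2N^2]}}_{U^s(\Z)}^{2^s}$, I would first unfold one layer of the Gowers norm via the identity $\norm{F_x}_{U^s}^{2^s} = \E_h \norm{\Delta_h F_x}_{U^{s-1}}^{2^{s-1}}$, together with the standard inverse-type step which (after passing to a positive-density set of $x$ and a positive-density set of shifts $h$) produces, for each such pair $(x,h)$, a frequency $\psi_{x,h}\in\T$ with
$$
\biggabs{\sum_y \Delta_h F_x(y)\, e\bigl(\psi_{x,h}\, y^{s-1}\bigr)} \gg \delta^{O_s(1)} N^2.
$$
Here one uses that $F_x$ is supported on $[2N^2]$ and the 1-boundedness of $F_x$, exactly as in the degree-lowering argument of \cite{PelusePrendivilleQuantitative}.

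The next step, and the heart of the matter, is to show that the frequencies $\psi_{x,h}$ are \emph{major arc} and in fact essentially independent of $h$ (as an affine function of $h$ of small denominator), so that one can integrate out $h$ and drop the degree from $s-1$ to $s-2$. This is where the dual structure of $F$ is used: writing out $\Delta_h F_x(y)$ via the definition $F_x(y) = \sum_d \mu_N(d) f_0(x,y-d^2)f_1(x+d,y-d^2)$ and expanding the difference, one reduces (after Cauchy--Schwarz in the $d$-variables, as in the proof of Lemma \ref{lem:duals-conspire}) to shifted quadratic correlations of the shape $\sum_d \mu_N(d+a)\mu_N(d+b) e(\alpha d^2 + \cdots)$, and the Weyl bound \eqref{eq:weyl-type-3} forces $\norm{q\alpha}_\T \ll \delta^{-O(1)}/N^2$ for a bounded $q$. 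Combined with the ``pigeon-holing major arcs'' lemma (Lemma \ref{lem:PH-major}) applied to the family $\{\psi_{x,h}\}$ over the positive-density set of $(x,h)$, this collapses the many correlations to a single major-arc frequency $\alpha$ with $\norm{q\alpha}_\T \ll_s \delta^{-O_s(1)}/N^2$, and shows that for a positive-density set of $x$,
$$
\E_{y\in[2N^2]}\biggabs{\sum_{d} \mu_N(d)\, \Delta_{\cdot} f_1(x+d,\cdot)(y)\, e(\alpha d^2)} \gg_s \delta^{O_s(1)} N^2,
$$
i.e.\ an inequality of the same form as the hypothesis but with the $U^s$-norm of a dual of $f_1$ replaced by a lower Gowers norm (the precise bookkeeping here mirrors Lemma \ref{lem:duals-conspire}, with $\phi_m(x)$ playing the role of $\psi_{x,h}$ after a harmless affine change absorbing the $h$-dependence). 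Feeding this back into the inductive hypothesis for $s-1$ (or, once $s-1 = 1$, reading off \eqref{eq:major-arc-correlation} directly) yields the claimed $\alpha$ and $q$.

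The main obstacle I anticipate is the bookkeeping in the induction step: one must keep careful track of (i) the support constraints on $f_0$ and $f_1$ through each Cauchy--Schwarz and change of variables, (ii) the fact that at each stage one only has the estimate on a \emph{positive-density} subset of $x$ (so the quantifiers ``for a positive proportion of $x$'' must be threaded through every application of Lemma \ref{lem:PH-major} and the popularity principle), and (iii) ensuring the frequency produced at level $s-1$ can genuinely be taken affine in $h$ with controlled denominator, which is what legitimizes integrating out $h$ — this is precisely the role played by \eqref{eq:weyl-type-3} and is the step most prone to hidden loss. The degenerate cases ($N$ small, the Gowers norm of the cutoff being comparable to $N^2$ raised to the appropriate power) are dispatched exactly as in \cite{PelusePrendivilleQuantitative}, and it is there that the alternative $N \ll_s \delta^{-O_s(1)}$ in the statement enters.
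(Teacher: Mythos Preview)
Your proposal has a genuine gap at the very first step of the inductive argument. After unfolding one layer to obtain $\E_h\norm{\Delta_h F_x}_{U^{s-1}}^{2^{s-1}}$ large, you assert a ``standard inverse-type step'' producing a single frequency $\psi_{x,h}$ with
\[
\biggabs{\sum_y \Delta_h F_x(y)\, e\bigl(\psi_{x,h}\, y^{s-1}\bigr)} \gg_s \delta^{O_s(1)} N^2.
\]
No such step exists with polynomial bounds: extracting a polynomial phase of degree $s-1$ from a large $U^{s-1}$-norm is the content of the (deep) inverse theorem for the Gowers norms, and the whole point of the degree-lowering technique is to \emph{avoid} invoking that theorem. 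There is no elementary substitute that yields correlation with $e(\psi y^{s-1})$ here. Relatedly, your induction on $s$ does not close: $\Delta_h F_x$ is not itself a dual function of the form treated by the lemma (the product $F_x(y)\overline{F_x(y+h)}$ involves two independent $d$-sums, not one), so you cannot simply feed it back into the $s-1$ case.

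The paper's mechanism is different. One expands \emph{all} $s-2$ layers at once, reducing to a $U^2$-norm of $\Delta_{h_1,\dots,h_{s-2}}F_x$, and then applies only the $U^2$-inverse theorem to obtain a \emph{linear} phase $\phi(\vh;x)$ for many $(\vh,x)$. The dual structure of $F$ is then exploited not via another Cauchy--Schwarz on the $d$-variable as you suggest, but via the \emph{dual--difference interchange} inequality (Lemma~6.3 of \cite{PelusePrendivilleQuantitative}), which converts $\Delta_{\vh}F_x$ into a single $d$-sum against differenced versions of $f_0,f_1$; Lemma~\ref{lem:duals-conspire} then forces the alternating sum $\sum_\omega(-1)^{|\omega|}\phi(\vh^\omega;x)$ to be essentially constant. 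This says exactly that $\phi$ is \emph{low rank} in $\vh$, and the low-rank lemma (Lemma~6.4 of \cite{PelusePrendivilleQuantitative}) converts low-rank linear-phase correlation back into a large $U^{s-1}$-norm of $F_x$. Iterating this $U^s\Rightarrow U^{s-1}$ step down to $s=2$ then finishes as you describe. The quantity being lowered is the order of the Gowers norm controlling $F_x$, not the degree of a polynomial phase.
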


\begin{proof}  
Let us first assume that $s\geq 3$. In this case, we claim that our assumptions imply that
\begin{equation}\label{eq:deg-lower-ineq}
\E_{x\in [N]}\norm{F_{x}}_{U^{s-1}(\Z)}^{2^{s-1}} \gg_s \delta^{O_s(1)} \norm{1_{[2N^2]}}_{U^{s-1}(\Z)}^{2^{s-1}}.
\end{equation}
Given this claim, we intend to iterate in order to arrive at the case in which $s = 2$, so that
\begin{equation}\label{eq:deg-lower-to-U2}
\E_{x\in [N]}\norm{F_{x}}_{U^{2}(\Z)}^{4} \gg_s \delta^{O_s(1)}\norm{1_{[2N^2]}}_{U^{2}(\Z)}^{4}.
\end{equation}

Returning to the proof of \eqref{eq:deg-lower-ineq}, since we may assume that $N \ll_s 1$ does not hold, we have 
$$
\norm{1_{[2N^2]}}_{U^s(\Z)}^{2^s} \gg_s N^{2s+2}.
$$ 
Expanding the definition of the $U^s$-norm \eqref{Us def}, and using the fact that $F_x$ is supported on $[2N^2]$,  gives
$$
\E_{|h_1|< 2N^2} \dotsm\E_{|h_{s-2}|< 2N^2}\E_{x\in [N]} \norm{\Delta_{h_1, \dots, h_{s-2}}F_{x}}_{U^2}^4 \gg_s \delta N^6.
$$

Applying the $U^2$-inverse theorem \cite[Lemma A.1]{PelusePrendivilleQuantitative}, there exists 
$$
\mathcal{N} \subset \left((-2N^2, 2N^2)^{s-2}\cap\mathbb{Z}^{s-2}\right)\times [N]
$$ of size $|\mathcal{N}| \gg_s \delta N^{2s-3}$ and a function $\phi : \Z^{s-1} \to \T$ such that for every $(\vh, x) \in \mathcal{N}$ we have
\begin{equation}\label{phi correlation}
\biggabs{\sum_y \Delta_{\vh} F_{x}(y) e\bigbrac{\phi(\vh;x)y} }\gg_s \delta N^2.
\end{equation}

Set $T := \ceil{C_s\delta^{-1}N^2}$, with $C_s$ an absolute constant taken sufficiently large to ensure that, on rounding $\phi(\vh;x)$ to the nearest fraction of the form $t/T$, the inequality \eqref{phi correlation} remains valid.  

Summing over $ \mathcal{N}$ and applying the dual-difference interchange inequality \cite[Lemma 6.3]{PelusePrendivilleQuantitative}, we deduce that 
\begin{multline*}
 \sum_{(\vh^{0},x), (\vh^1, x) \in \mathcal{N}} \biggl|\sum_{y,d}\mu_N(d) \Delta_{(0,\vh^0-\vh^1)} f_0(x,y-d^2)\\ \Delta_{(0,\vh^0-\vh^1)}f_1(x+d,y-d^2)
  e\bigbrac{\phi(\vh^0; \vh^1;x)y}\biggr| \gg_s \delta^{O_s(1)} N^{4s-5},
\end{multline*}
where
$$
\phi(\vh^0; \vh^1;x) := \sum_{\omega \in \set{0, 1}^s} (-1)^{|\omega|} \phi(\vh^{\omega};x)\qquad \text{and} \qquad
\vh^{\omega} := (h_1^{\omega_1}, \dots, h_s^{\omega_s}).
$$
Changing variables from $y$ to $y+d^2$ and taking  maxima, there exist $y$ and $\vh^1$ such that 
\begin{equation*}
\sum_{(\vh^{0},x) \in \mathcal{N}} \abs{\sum_{d} \mu_N(d)\Delta_{(0,\vh^0-\vh^1)}f_1(x+d,y)
  e\bigbrac{\phi(\vh^0; \vh^1;x)d^2}} \gg_s \delta^{O_s(1)} N^{2s-3}.
\end{equation*}
Thus, there exists $\mathcal{N}' \subset \mathcal{N}$ of size $\mathcal{N}'\gg_s \delta^{O_s(1)} N^{2s-3}$  such that for each $(\vh^0, x) \in \mathcal{N}'$ we have
 \begin{equation*}
 \abs{\sum_{d} \mu_N(d)\Delta_{(0,\vh^0-\vh^1)}f_1(x+d,y)
  e\bigbrac{\phi(\vh^0; \vh^1;x)d^2}} \gg_s \delta^{O_s(1)} .
\end{equation*}

By Lemma \ref{lem:duals-conspire}, there exists $\mathcal{N}''\subset \mathcal{N}'$ of size $\gg_s\delta^{O_s(1)} N^{2s-3}$ and $\phi \in \T$ such that for any $(\vh^0,x)\in \mathcal{N}''$ we have $\phi(\vh^0; \vh^1; x) = \phi$.  In particular, when restricted to the set $\mathcal{N}''$, the function $\phi$ satisfies
$$
\phi(\vh^0; x) =  \phi - \sum_{\omega\in \set{0,1}^{s-2}\setminus\set{0}} (-1)^{|\omega|} \phi(\vh^{\omega};x).
$$
For fixed $x$, the right-hand side of this identity is \emph{low rank} in $\vh^0$ according to the terminology preceding \cite[Lemma 6.4]{PelusePrendivilleQuantitative}.

Summing over $\vh$ such that $(\vh, x)\in \mathcal{N}''$ in \eqref{phi correlation}, we deduce the existence of  low-rank functions $\psi_{x}: \Z^{s-2} \to \T$ such that
\begin{equation}\label{eq:almost-conc}
\E_{x\in [N]}\E_{|h_1|< N^2} \dotsm\E_{|h_{s-2}|< N^2}\abs{\sum_{y} \Delta_{\vh} F_{x}(y) e\bigbrac{\psi_{x}(\vh)y} }\gg_s \delta^{O_s(1)}N^2 .
\end{equation}
Employing the inequality stated in \cite[Lemma 6.4]{PelusePrendivilleQuantitative} then gives
\begin{equation*}\label{eq:deg-lower-ineq-2}
\E_{x\in [N]}\norm{F_{x}}_{U^{s-1}}^{2^{s-1}} \gg_s \delta^{O_s(1)}N^{2s}.
\end{equation*}
This proves the claim \eqref{eq:deg-lower-ineq}. 

By iterating \eqref{eq:deg-lower-ineq}, our initial assumption of large $U^s$-norm yields a large $U^2$-norm, as in \eqref{eq:deg-lower-to-U2}. Moreover, we note that when running the argument with $s= 3$, the inequality \eqref{eq:almost-conc} becomes 
$$
\E_{x\in [N]}\abs{\E_{|y|\leq 2N^2}  F_{x}(y) e\bigbrac{\psi_x y} }\gg_s \delta^{O_s(1)},
$$
where for each $x$, our application of Lemma \ref{lem:duals-conspire} gives some $q_x\ll_s \delta^{-O_s(1)}$ with $\norm{q_x\psi_x}_{\T} \ll_s \delta^{-O_s(1)}/N^2$. Once more applying Lemma \ref{lem:PH-major}, we may assume that $\psi_x = \psi$ is constant in $x$, so that
$$
\E_{x\in [N]}\biggabs{\sum_y  F_{x}(y) e\bigbrac{\psi y} }\gg_s \delta^{O_s(1)}N^2.
$$
Expanding the dual function $F_{x}$, changing variables in $y$ and employing the triangle inequality to bring the sum over $y$ outside the absolute value, we derive \eqref{eq:major-arc-correlation}.
\end{proof}
\section{Inverse theorems for our counting operator}\label{sec:inv-thm}
Degree lowering (Lemma \ref{lem:deg-lower}) can be combined with vertical $U^5$-control of
the dual function (Corollary \ref{cor:dual-U5}), and a little Fourier analysis, in order to prove an inverse theorem for the function weighting the $(x+d, y)$ term in our counting operator.

\begin{theorem}[Counting operator inverse theorem, I]\label{thm:inv-thm-1}
Let $f_0, f_1, f_2 : \Z^2 \to \C$ be 1-bounded functions and let $\mu_N$ denote the probability measure
$$
\mu_N := \trecip{N^2}1_{[N]}*1_{-[N]}= \trecip{N}\brac{1-\tfrac{|\cdot|}{N}}_+.
$$ 
Write
$$
\Lambda_N(f_0, f_1, f_2) := \E_{x \in [N]}\E_{y \in [N^2]}\sum_d \mu_N(d) f_0(x,y)f_1(x+d,y) f_2(x,y+d^2).
$$
If $\abs{\Lambda_N(f_0,f_1, f_2)} \geq \delta$, then either $N \ll \delta^{-O(1)}$ or there exists $\beta \in \T$ and $q \ll \delta^{-O(1)}$ such that $\norm{q\beta}_{\T} \ll\delta^{-O(1)}/N$ and
\begin{equation}\label{eq:1st-inverse-eq}
\E_{y\in [N^2]} \abs{\E_{x\in [-N, 2N]} f_1(x, y) e\brac{\beta x}} \gg \delta^{O(1)} .
\end{equation}
\end{theorem}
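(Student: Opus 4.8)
The plan is to combine Corollary~\ref{cor:dual-U5} with the degree-lowering lemma (Lemma~\ref{lem:deg-lower}) and then convert the resulting major-arc correlation into the desired $U^1$-type estimate on $f_1$ via a short Fourier-analytic argument. First I would assume $N \gg \delta^{-O(1)}$, since otherwise the conclusion is trivial (one can always produce a suitable $\beta$ when $N$ is bounded). Applying Corollary~\ref{cor:dual-U5} to the hypothesis $|\Lambda_N(f_0,f_1,f_2)| \geq \delta$, with $g := f_2$, gives
$$
\E_{x\in[N]}\norm{F_x 1_{[2N^2]}}_{U^5(\Z)}^{2^5} \gg \delta^{O(1)}\norm{1_{[2N^2]}}_{U^5(\Z)}^{2^5},
$$
where $F_x(y) = \sum_d \mu_N(d) f_0(x,y-d^2)f_1(x+d,y-d^2)$ is the dual function. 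This is exactly the hypothesis of Lemma~\ref{lem:deg-lower} with $s=5$ (after noting the support conditions, which we may arrange as in the proof of Corollary~\ref{cor:dual-U5} without affecting hypothesis or conclusion). Hence Lemma~\ref{lem:deg-lower} yields $\alpha \in \T$ and $q \ll \delta^{-O(1)}$ with $\norm{q\alpha}_{\T} \ll \delta^{-O(1)}/N^2$ such that
$$
\E_{x\in[N]}\E_{y\in[N^2]}\abs{\sum_d \mu_N(d) f_1(x+d,y) e(\alpha d^2)} \gg \delta^{O(1)}.
$$

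Next I would massage this into a statement about linear correlations of $f_1$. Fix $x$ and $y$ contributing to the above average; for these we have $|\sum_d \mu_N(d) f_1(x+d,y)e(\alpha d^2)| \gg \delta^{O(1)}$. Averaging over $x$ only (with $y$ fixed), this is precisely the hypothesis \eqref{eq:lin-shift} of Lemma~\ref{lem:dual-major} applied to the one-variable function $f_1(\cdot, y)$: after a popularity/pigeonhole step there is a set of $y \in [N^2]$ of density $\gg\delta^{O(1)}$ for which
$$
\E_{x\in[N]}\abs{\sum_d \mu_N(d) f_1(x+d,y)e(\alpha d^2)} \gg \delta^{O(1)}.
$$
For each such $y$, Lemma~\ref{lem:dual-major} produces $q_y \ll \delta^{-O(1)}$ with $\norm{q_y\alpha}_{\T}\ll\delta^{-O(1)}/N^2$ and a frequency $\beta_y \in \T$ with $\norm{q_y\beta_y}_{\T}\ll\delta^{-O(1)}/N$ such that
$$
\abs{\sum_{x\in[-N,2N]} f_1(x,y)e(\beta_y x)} \gg \delta^{O(1)} N.
$$

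The remaining—and I expect mildly the most delicate—step is to remove the $y$-dependence of $\beta_y$ so as to obtain a single $\beta$, which is what \eqref{eq:1st-inverse-eq} demands (the left-hand side has the absolute value \emph{inside} the $y$-average, so we genuinely need one common frequency). This is handled by Lemma~\ref{lem:PH-major} (pigeon-holing major arcs) with $k=1$: the frequencies $\beta_y$ all lie close to a common major-arc structure at scale $N$, so after rounding to a $\frac{1}{QN}$-separated net with $Q = \delta^{-O(1)}$ and pigeonholing, there is a single $\beta = \beta_{y_0}$ and a set of $y$ of density $\gg\delta^{O(1)}$ for which $\abs{\E_{x\in[-N,2N]} f_1(x,y)e(\beta x)} \gg \delta^{O(1)}$, with $\norm{q\beta}_{\T}\ll\delta^{-O(1)}/N$ for the appropriate $q\ll\delta^{-O(1)}$. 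Summing the resulting bound over this dense set of $y$, and bounding the contribution of the remaining $y$ trivially (it only helps, since we have a lower bound on the $\E_{y}$ of a nonnegative quantity), gives
$$
\E_{y\in[N^2]}\abs{\E_{x\in[-N,2N]} f_1(x,y)e(\beta x)} \gg \delta^{O(1)},
$$
which is exactly \eqref{eq:1st-inverse-eq}. The only real care needed is bookkeeping the polynomial powers of $\delta$ and checking that the various $q$'s produced at different stages can be amalgamated into one $q\ll\delta^{-O(1)}$ by passing to a common multiple, which is harmless since there are only $O(1)$ of them each of size $\delta^{-O(1)}$.
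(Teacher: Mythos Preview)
Your proposal is correct and follows essentially the same approach as the paper: apply Corollary~\ref{cor:dual-U5} to get $U^5$-control of the dual, feed this into Lemma~\ref{lem:deg-lower} to extract a major-arc $\alpha$ and the averaged quadratic correlation for $f_1$, then use popularity plus Lemma~\ref{lem:dual-major} on each vertical fibre to produce $\beta_y$, and finally invoke Lemma~\ref{lem:PH-major} (with $\alpha=0$, $k=1$) to collapse to a single $\beta$. The paper's proof is identical in structure, only slightly terser in the popularity and pigeon-holing bookkeeping.
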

\begin{proof}
In order  to avoid  tracking the range of summation, we  assume that 
$$
\supp(f_0) \subset [N]\times [N^2] \quad \text{and} \quad \supp(f_1) \subset [-N, 2N] \times [N^2]
$$
Notice that this affects neither the hypothesis nor the conclusion of the theorem.

By vertical $U^5$-control of the dual function (Corollary \ref{cor:dual-U5}), our assumptions give that  
$$
\E_{x \in [N]}\norm{F_{x}}_{U^5(\Z)}^{2^5} \gg \delta^{O(1)} \norm{1_{[2N^2]}}_{U^5(\Z)}^{2^5},
$$
where
$$
F_{x}(y)=F(x,y) := \sum_d\mu_N(d) f_0(x,y-d^2) f_1(x+d,y-d^2).
$$
By degree lowering (Lemma \ref{lem:deg-lower}), either $N \ll \delta^{-O(1)}$ or there exists $\alpha \in \T$ and $q \ll \delta^{-O(1)}$ with $\norm{q\alpha}_{\T} \ll \delta^{-O(1)}/N^2$ such that
$$
\E_{x\in [N]}\E_{y\in [N^2]}\abs{\sum_{d}\mu_N(d) f_1(x+d,y)e(\alpha d^2)} \gg \delta^{O(1)}.
$$

By the popularity principle and Lemma \ref{lem:dual-major}, for each $y \in [N^2]$ there exist $q_y \ll \delta^{-O(1)}$ and $\beta_y \in \T$ with $\norm{q_y\beta_y}_{\T} \ll \delta^{-O(1)}/N$ such that 
$$
\E_{y\in [N^2]}\abs{\E_{x\in [-N, 2N]} f_1(x,y) e(\beta_y x)} \gg \delta^{O(1)}.
$$
Pigeon-holing in the major arcs (Lemma \ref{lem:PH-major}), there exists $\beta \in \T$ and $q \ll \delta^{-O(1)}$ with $\norm{q\beta}_{\T} \ll \delta^{-O(1)}/N$ such that 
$$
\E_{y\in[N^2]} \abs{\E_{x\in [-N, 2N]} f_1(x,y) e(\beta x)} \gg \delta^{O(1)} .
$$
The result follows.
\end{proof}

We now use the inverse theorem for the function weighting the $(x+d, y)$ term to obtain an inverse theorem for the  function weighting the $(x, y+d^2)$ term.

\begin{theorem}[Counting operator inverse theorem, II]\label{thm:inv-thm-2}
Let $f_0, f_1, f_2 : \Z^2 \to \C$ be 1-bounded functions  and let $\mu_N$ denote the probability measure
$$
\mu_N := \trecip{N^2}1_{[N]}*1_{-[N]}= \trecip{N}\brac{1-\tfrac{|\cdot|}{N}}_+.
$$ 
Write
$$
\Lambda_N(f_0, f_1, f_2) := \E_{x \in [N]}\E_{y \in [N^2]}\sum_d \mu_N(d)f_0(x,y)f_1(x+d,y) f_2(x,y+d^2).
$$
If $\abs{\Lambda_N(f_0,f_1, f_2)} \geq \delta$, then either $N \ll \delta^{-O(1)}$ or there exists $\alpha \in \T$ and $q \ll \delta^{-O(1)}$ such that $\norm{q\alpha}_{\T} \ll\delta^{-O(1)}/N^2$ and
\begin{equation*}
\E_{x\in[N]} \abs{\E_{y \in [2N^2]} f_2(x, y) e\brac{\alpha y}} \gg \delta^{O(1)}.
\end{equation*}
\end{theorem}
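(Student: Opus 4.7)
The plan is to derive this from Theorem \ref{thm:inv-thm-1}: we use the latter to extract a major-arc linear phase of $f_1$ in the $x$-direction, substitute this back into $\Lambda_N$, and then apply Lemma \ref{lem:dual-major} (in the form \eqref{eq:quad-shift}) to convert the resulting $e(\beta d)$-weighted correlation into a major-arc correlation of $f_2$ at scale $N^2$. A final pigeon-holing of the $x$-dependent major arcs via Lemma \ref{lem:PH-major} will yield the theorem.

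After Theorem \ref{thm:inv-thm-1} supplies $\beta$ major arc at scale $N$ with $\E_y|\E_x f_1(x,y)e(\beta x)|\gg\delta^{O(1)}$, I would iteratively peel off major-arc Fourier modes of $f_1$ in the horizontal direction: set $g_0 := f_1$, and at each step apply Theorem \ref{thm:inv-thm-1} to (a suitable rescaling of) $g_k$ to locate $\beta_k$ major arc, then subtract the projection $M_k(y)e(\beta_k x)1_{[-N,2N]}(x)$ with $M_k(y) := \E_{x\in[-N,2N]} g_k(x,y)e(-\beta_k x)$. A fibrewise Parseval estimate bounds the number of such modes of coefficient size $\gg \delta^{O(1)}N$ by $\ll \delta^{-O(1)}$, so the iteration terminates in $K\ll\delta^{-O(1)}$ steps with $|\Lambda_N(f_0,g_K,f_2)|\leq\delta/2$. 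Pigeon-holing in $k<K$ then isolates a single peel satisfying
$$
\bigabs{\Lambda_N\bigbrac{f_0,\, M_k(y)e(\beta_k x)1_{[-N,2N]}(x),\, f_2}}\gg\delta^{O(1)},
$$
and expanding, together with the pointwise bound $|f_0(x,y)M_k(y)e(\beta_k x)|\leq 1$, yields
$$
\E_{x\in[N]}\E_{y\in[N^2]}\biggabs{\sum_d \mu_N(d)e(\beta_k d) f_2(x, y+d^2)}\gg\delta^{O(1)}.
$$
The popularity principle then supplies a dense set of $x\in[N]$ on which the inner $y$-expectation remains $\gg\delta^{O(1)}$; Lemma \ref{lem:dual-major} \eqref{eq:quad-shift} applied pointwise in $x$ produces frequencies $\alpha_x$ with $\norm{q_x\alpha_x}_\T\ll\delta^{-O(1)}/N^2$ and $|\sum_y f_2(x,y)e(\alpha_x y)|\gg\delta^{O(1)}N^2$, and a final application of Lemma \ref{lem:PH-major} pigeon-holes these to a common $\alpha$, completing the proof.

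The main obstacle is controlling the iterative peeling: the additive subtraction $g_{k+1}:=g_k - M_k(y)e(\beta_k x)1_{[-N,2N]}(x)$ can double $\norm{g_k}_\infty$ per step, requiring a compensating rescaling at each application of Theorem \ref{thm:inv-thm-1} if one is to retain polynomial bounds. A cleaner implementation is a one-shot Fourier-projection decomposition $f_1 = f_1^{\mathrm{maj}}+f_1^{\mathrm{err}}$ onto the $\ll \delta^{-O(1)}$ major-arc frequencies above the threshold dictated by Theorem \ref{thm:inv-thm-1}; a further application of Theorem \ref{thm:inv-thm-1} to the error term shows $|\Lambda_N(f_0, f_1^{\mathrm{err}}, f_2)|\leq \delta/2$, obviating the iteration and leaving only a pigeon-hole over the finite set of major-arc modes.
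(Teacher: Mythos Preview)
Your endgame (applying Lemma \ref{lem:dual-major} in the form \eqref{eq:quad-shift} pointwise in $x$, then pigeon-holing via Lemma \ref{lem:PH-major}) is exactly what the paper does. The divergence is in how you reach the intermediate estimate
\begin{equation}\label{eq:student-intermediate}
\E_{x\in[N]}\E_{y\in[N^2]}\biggabs{\sum_d \mu_N(d)\, e(\beta d)\, f_2(x,y+d^2)} \gg \delta^{O(1)}.
\end{equation}

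The paper obtains \eqref{eq:student-intermediate} in one stroke, without any decomposition of $f_1$. Define the dual at the $f_1$-position,
\[
G(x,y) := \sum_d \mu_N(d)\, f_0(x-d,y)\, f_2(x-d,y+d^2),
\]
so that $\Lambda_N(f_0,f_1,f_2) = N^{-3}\sum_{x,y} f_1(x,y) G(x,y)$. Cauchy--Schwarz gives $\delta^2 \ll N^{-3}\|G\|_2^2 = \Lambda_N(\overline{f_0},G,\overline{f_2})$, and now Theorem \ref{thm:inv-thm-1} is applied with $G$ in the role of $f_1$ (note $G$ is $1$-bounded). The resulting correlation $\E_y|\E_x G(x,y)e(\beta x)|\gg\delta^{O(1)}$, once $G$ is expanded and one changes variables $x\mapsto x+d$, \emph{is} \eqref{eq:student-intermediate} after a triangle inequality to strip off $|f_0|\le 1$. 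No peeling, no projection, no bookkeeping of $L^\infty$ norms.

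Your route, by contrast, applies Theorem \ref{thm:inv-thm-1} to $f_1$ itself and then attempts to transfer the Fourier information back into $\Lambda_N$ by decomposing $f_1$. The iterative peeling genuinely fails to give polynomial bounds: the energy decrement at step $k$ is only $\gg(\delta/C_k)^{O(1)}$ with $C_k=\|g_k\|_\infty$, and since $C_k$ can double at each step this decrement shrinks geometrically, so the energy bound $\sum_k\E_y|M_k(y)|^2\le 1$ no longer forces termination in $\delta^{-O(1)}$ steps. The one-shot alternative is salvageable but not as written: a genuine Fourier projection onto major arcs on $\Z$ is convolution with a Dirichlet-type kernel whose $\ell^1$ norm is unbounded, so $f_1^{\mathrm{err}}$ need not be bounded at all. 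You would need to replace the sharp cutoff by a smooth major-arc multiplier $\chi$ with $\|\check\chi\|_1\ll\delta^{-O(1)}$, match the two scales of major arcs carefully (the level at which $\chi\equiv 1$ must contain the level output by Theorem \ref{thm:inv-thm-1} applied at parameter $\asymp\delta^{O(1)}$), and then still argue that $f_1^{\mathrm{maj}}$ decomposes into $\ll\delta^{-O(1)}$ rank-one pieces $M_k(y)e(\beta_k x)$ up to acceptable error. All of this is standard but nontrivial; the dual-function trick bypasses it entirely.
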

\begin{proof}
We may assume that 
$$
\supp(f_0) \subset [N]\times [N^2],\quad \supp(f_1) \subset [-N, 2N]\times [N^2],\quad \supp(f_2) \subset [N]\times [2N^2].
$$
Define the dual function
$$
G(x,y) := \sum_d \mu_N(d) f_0(x-d,y) f_2(x-d,y+d^2).
$$
Then, by the Cauchy--Schwarz inequality:
$$
\delta  \leq \abs{\recip{N^3}\sum_{x,y} f_1(x,y)G(x,y)} \ll  \brac{\recip{N^3}\sum_{x,y}G(x,y)\overline{G(x,y)}}^{1/2} = \Lambda_N\brac{\overline{f}_0,G, \overline{f}_2}^{1/2}.
$$
Hence, by our first inverse theorem (Theorem \ref{thm:inv-thm-1}), either $N \ll \delta^{-O(1)}$ or there exists $\beta \in \T$ and $q \ll \delta^{-O(1)}$ such that $\norm{q\beta}_{\T} \ll\delta^{-O(1)}/N$ and
\begin{equation*}
\E_{y\in[N^2]} \abs{\E_{x\in[-N,2N]} G(x, y) e\brac{\beta x}} \gg \delta^{O(1)} .
\end{equation*}
Expanding the dual function, changing variables and employing the triangle inequality, we have
$$
 \E_{x\in[N]}\E_{y\in [N^2]}\abs{\sum_d\mu_N(d)  f_2(x,y+d^2)e(\beta d)}  \gg \delta^{O(1)}.
$$

By the popularity principle and Lemma \ref{lem:dual-major}, for each $x \in [N]$ there exist $q_x \ll \delta^{-O(1)}$ and $\alpha_x \in \T$ with $\norm{q_x\alpha_x}_{\T} \ll \delta^{-O(1)}/N^2$ such that 
$$
\E_{x\in[N]}\abs{\E_{y\in [2N^2]} f_2(x,y) e(\alpha_x y)} \gg \delta^{O(1)}.
$$
Pigeon-holing in the major arcs (Lemma \ref{lem:PH-major}), there exists $\alpha \in \T$ and $q \ll \delta^{-O(1)}$ with $\norm{q\alpha}_{\T} \ll \delta^{-O(1)}/N^2$ such that 
$$
\E_{x\in[N]}\abs{\E_{y\in [2N^2]} f_2(x,y) e(\alpha y)} \gg \delta^{O(1)}.
$$
The result follows.
\end{proof}

The previous two inverse theorems concern the global count of configurations. As indicated in \S\ref{sec:outline}, our strategy for proving Theorem \ref{thm:2D-pop-diff} is to focus on counting configurations where the difference parameter is divisible by a small positive integer $q$ and is localised to an interval shorter than $N$. We therefore require an inverse theorem for this localised counting operator. We also take this opportunity to replace correlation with major arcs by convolution with a Fej\'er kernel, which facilitates the energy increment argument in Theorem \ref{thm:energy-1}. Making these adjustments to our inverse theorem is the purpose of the next result.
\begin{theorem}[Large localised count gives a large convolution]\label{thm:large-asym-count}
Let $f_0, f_1, f_2 : \Z^2 \to \C$ be 1-bounded functions with support contained in $[-N_1, N_1]\times [-N_2, N_2]$ where $N_1 \geq N_2^{1/2}$.  Suppose that for some positive integers $q,M$ with $qM \leq N_2^{1/2}$ we have
$$
\abs{\sum_{x,y,d} \mu_M(d)
f_0(x,y)f_1(x+qd,y)f_2(x, y+q^2 d^2)} \geq \delta  N_1N_2.
$$
Then there exists a positive integer $ \tilde q$ with $ \tilde q \ll \delta^{-O(1)}$ such that for any positive integer $ \widetilde M$, either $ \widetilde M \gg \delta^{O(1)} M$ or
\begin{equation}\label{eq:convolved-inverse}
\sum_{x,y} \abs{\sum_{ \tilde d}\mu_{ \widetilde M^2}( \tilde d) f_2(x, y+q^2 \tilde q^2 \tilde d) } \gg \delta^{O(1)} N_1N_2.
\end{equation}
Here $\mu_M$ and $\mu_{ \widetilde M^2}$ denote the Fej\'er kernels defined in \eqref{fejer}.
\end{theorem}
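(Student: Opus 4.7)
The strategy is to reduce to Theorem \ref{thm:inv-thm-2} applied at scale $M$ via a residue-class decomposition combined with windowing, and then convert the resulting major-arc character correlation into a convolution estimate.

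First, decompose $x = qa + r_1$ and $y = q^2 b + r_2$ with $r_1 \in \set{0, \ldots, q-1}$ and $r_2 \in \set{0, \ldots, q^2-1}$; the configuration $(x, y), (x+qd, y), (x, y+q^2 d^2)$ becomes the stride-$1$ configuration $(a, b), (a+d, b), (a, b+d^2)$ on a rescaled grid $\tilde N_1 \times \tilde N_2 := (N_1/q) \times (N_2/q^2)$, with $M \leq \tilde N_2^{1/2} \leq \tilde N_1$. Writing $\delta_{r_1, r_2}$ for the rescaled density of the counting operator in residue $(r_1, r_2)$, the triangle inequality forces $\sum_{r_1, r_2} \delta_{r_1, r_2} \geq \delta q^3$, so popularity yields $\gg \delta q^3$ ``good'' residues with $\delta_{r_1, r_2} \gg \delta$. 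In each such residue I localize by averaging the rescaled sum over window-shifts $(a_0, b_0)$, with windows of size $M \times M^2$; popularity again furnishes a $\gg \delta$-fraction of such windows satisfying $\abs{\Lambda_M(\phi_0, \phi_1, \phi_2)} \gg \delta$, where $\phi_i^{(a_0, b_0, r_1, r_2)}(a, b) := \tilde f_i^{(r_1, r_2)}(a_0 + a, b_0 + b)$ is truncated to the appropriate support. In total this produces $\gg \delta^2 N_1 N_2$ good (window, residue) quadruples.

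Second, Theorem \ref{thm:inv-thm-2} applied in each good quadruple produces a frequency $\alpha \in \T$ and denominator $q^* \ll \delta^{-O(1)}$ with $\norm{q^* \alpha}_\T \ll \delta^{-O(1)}/M^2$ for which $\phi_2$ correlates with $e(\alpha b)$; if instead $M \ll \delta^{-O(1)}$, then $\widetilde M \gg \delta^{O(1)} M$ holds for every positive integer $\widetilde M$, making the conclusion vacuously true. Applying Lemma \ref{lem:PH-major} at scale $k = 1$, $N = 2M^2$ unifies these $\alpha$'s across quadruples to a single major arc $(\alpha^*, \tilde q)$ serving a $\delta^{O(1)}$-fraction of the good quadruples, so $\gg \delta^{O(1)} N_1 N_2$ quadruples share the common major arc. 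Because $\norm{\tilde q \alpha^*}_\T \ll \delta^{-O(1)}/M^2$, the character $e(\alpha^* b)$ is approximately invariant under shifts $b \to b + \tilde q^2 \tilde d$ for $\abs{\tilde d} \leq \widetilde M^2$ provided $\widetilde M \ll \delta^{O(1)} M$ (absorbing $\tilde q \ll \delta^{-O(1)}$ into the implicit constants). Substituting $e(\alpha^* b) \approx \sum_{\tilde d} \mu_{\widetilde M^2}(\tilde d) e(\alpha^*(b + \tilde q^2 \tilde d))$ into the correlation, changing variables $b \to b - \tilde q^2 \tilde d$, and applying the triangle inequality converts the correlation $\gg \delta^{O(1)}$ into the convolution estimate
\[
\E_{b} \abs{\sum_{\tilde d} \mu_{\widetilde M^2}(\tilde d) \phi_2^{(a_0, b_0, r_1, r_2)}(a, b + \tilde q^2 \tilde d)} \gg \delta^{O(1)}.
\]

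Finally, sum these per-quadruple convolution bounds over the $\gg \delta^{O(1)} N_1 N_2$ good quadruples carrying the common major arc, yielding total mass $\gg \delta^{O(1)} N_1 N_2 M^3$. Swapping the order of summation and using that each $(x, y) \in \Z^2$ lies in exactly $M^3$ (window, residue) quadruples (one residue determined by $(x \bmod q, y \bmod q^2)$, with $M \cdot M^2$ window shifts), I divide by this multiplicity; undoing the residue rescaling (so the shift $\tilde q^2 \tilde d$ on $b$ becomes $q^2 \tilde q^2 \tilde d$ on $y$) yields $\sum_{x, y} \abs{\sum_{\tilde d} \mu_{\widetilde M^2}(\tilde d) f_2(x, y + q^2 \tilde q^2 \tilde d)} \gg \delta^{O(1)} N_1 N_2$. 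The main technical obstacle is the combinatorial bookkeeping in this aggregation: the apparent $q^3$-loss from residue pigeonhole must cancel exactly, and does so because the \emph{average} rescaled density across the $q^3$ residues equals $\delta$ rather than $\delta/q^3$. This cancellation is precisely what makes the final bound $\delta^{O(1)} N_1 N_2$ and the denominator $\tilde q \ll \delta^{-O(1)}$ genuinely independent of $q$.
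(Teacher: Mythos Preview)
Your proposal is correct and follows essentially the same route as the paper: localise to scale $M$, apply Theorem \ref{thm:inv-thm-2} on each local piece, pigeonhole the resulting major-arc frequencies via Lemma \ref{lem:PH-major}, exploit approximate invariance of $e(\alpha\,\cdot)$ under shifts by $\tilde q^2 \tilde d$, and aggregate. The one structural difference is in the localisation step: you split into residue classes $(r_1,r_2)$ and then window inside each, whereas the paper uses a single parametrisation $f_i^{x,y,q}(x',y') := f_i(x+qx', y+q^2 y')$ with $(x,y)$ ranging over $\Z^2$ and $(x',y') \in [M]\times[M^2]$. The paper's version absorbs your residue variable into the base point $(x,y)$, so the identity $\sum_{x,y} \Lambda_M(f_0^{x,y,q}, f_1^{x,y,q}, f_2^{x,y,q}) = \text{(original sum)}$ is immediate from translation invariance, and the $q^3$-cancellation you flag as the ``main technical obstacle'' never arises explicitly. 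This also means the paper can work with the single sum $\sum_{x,y}$ throughout rather than summing over good quadruples and dividing by the multiplicity $M^3$ at the end. One further minor difference: the paper inserts a plain average $\E_{\tilde d \in [\widetilde M^2]}$ rather than the Fej\'er kernel, then recovers $\mu_{\widetilde M^2}$ at the very end via Cauchy--Schwarz doubling of $\tilde d$; your direct insertion of $\mu_{\widetilde M^2}$ is equivalent but you should be explicit that the change of variables $b \to b - \tilde q^2 \tilde d$ inside the $\tilde d$-sum requires a symmetric-difference estimate costing $O(\tilde q^2 \widetilde M^2 / M^2)$.
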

\begin{proof}
By a change of variables, we may average our operator over additional shifts, to deduce that
\begin{multline*}
\Bigl|\sum_{x,y}\E_{x'\in [M]}\E_{ y' \in [M^2]}\sum_d\mu_M(d)
f_0(x+qx',y+q^2y')f_1(x+q(x'+d),y+q^2y')\\
f_2(x+qx', y+q^2 (y' + d^2))\Bigr| \geq \delta  N_1N_2.
\end{multline*}
Hence, on setting
$$
f_i^{x,y,q}(x', y') := f_i(x+qx',y+q^2y'),
$$
we have that
\begin{equation}\label{eq:faq-sum-1}
\Bigl|\sum_{x,y}\Lambda_M\brac{f_0^{x,y,q}, f_1^{x,y,q}, f_2^{x,y,q}}\Bigr| \geq \delta  N_1N_2.
\end{equation}

We note that, since $qM \leq N_2^{1/2} \leq N_1$ and each $f_i$ is supported on $[-N_1, N_1]\times [-N_2, N_2]$, the set of pairs $(x,y)\in \Z^2$ that contribute to \eqref{eq:faq-sum-1} is contained in $[-2N_1, 2N_1]\times [-2N_2, 2N_2]$. Thus, by the popularity principle, there exists a set $\mathcal{P}\subset [-2N_1, 2N_1]\times [-2N_2, 2N_2]$ of size $\gg \delta N_1N_2$ such that 
$$
\abs{\Lambda_M\brac{f_0^{x,y,q}, f_1^{x,y,q}, f_2^{x,y,q}}} \gg \delta \quad \text{for all } (x,y) \in \mathcal{P}. 
$$

By our second counting operator inverse theorem (Theorem \ref{thm:inv-thm-2}), either $M \ll \delta^{-O(1)}$ or, for each pair $(x,y) \in \mathcal{P}$, there exists $\alpha_{x,y} \in \T$ and $q_{x,y} \ll \delta^{-O(1)}$ with $\norm{q_{x,y}\alpha_{x,y}}_{\T} \ll \delta^{-O(1)}/M^2$ and such that 
$$
\E_{x'\in [M]} \abs{\E_{y'\in [2M^2]} f^{x,y, q}_2(x', y') e\brac{\alpha_{x,y} y'}} \gg \delta^{O(1)}.
$$
Notice that we can ignore the possibility that $M \ll \delta^{-O(1)}$, since in this case \textit{all} positive integers $\widetilde M$ satisfy $\widetilde M \gg \delta^{O(1)}M$. Pigeon-holing in the major arcs, as in Lemma \ref{lem:PH-major}, we obtain $\alpha \in \T$ and $ \tilde q \ll \delta^{-O(1)}$ with $\norm{ \tilde q\alpha}_{\T} \ll \delta^{-O(1)}/M^2$ and such that
\begin{equation}\label{eq:faq-sum-2}
\sum_{x,y}\E_{x'\in[M]} \abs{\E_{y'\in[2M^2]} f^{x,y, q}_2(x', y') e\brac{\alpha y'}} \gg \delta^{O(1)} N_1N_2.
\end{equation}

We note that
$$
\abs{e\brac{\alpha y'}-e\brac{\alpha (y'+ \tilde q^2 \tilde d)}} \ll \delta^{-O(1)} | \tilde d|/M^2.
$$ 
Combining this with the fact that the set of pairs $(x,y)$ that contribute to \eqref{eq:faq-sum-2} is contained in $[-2N_1, 2N_1]\times [-2N_2, 2N_2]$, we deduce that for any positive integer $ \widetilde M$, we either have $ \widetilde M \gg \delta^{O(1)} M$ or 
\begin{equation}\label{eq:faq-sum-3}
\sum_{x,y}\E_{x'\in[M]} \abs{\E_{y'\in[2M^2]} f^{x,y, q}_2(x', y') \E_{ \tilde d \in [ \widetilde M^2]}e\brac{\alpha (y'+ \tilde q^2  \tilde d)}} \gg \delta^{O(1)}N_1N_2.
\end{equation}

Instead of averaging the phase in \eqref{eq:faq-sum-3} over shifts of the form $ \tilde q^2  \tilde d$, we would like to change variables and average $f_2^{x,y,q}(x', \cdot)$ over these shifts. We have the identity
\begin{multline*}
\E_{y'\in[2M^2]} f^{x,y, q}_2(x', y') \E_{ \tilde d \in [ \widetilde M^2]}e\brac{\alpha (y'+ \tilde q^2  \tilde d)}\\ = \E_{ \tilde d \in [ \widetilde M^2]}\E_{y'- \tilde q^2 \tilde d\in[2M^2]} f^{x,y, q}_2(x', y'- \tilde q^2 \tilde d) e\brac{\alpha y'}.
\end{multline*}
Hence, estimating the symmetric difference of the sets of summands gives that
\begin{multline*}
\E_{y'\in[2M^2]} f^{x,y, q}_2(x', y') \E_{ \tilde d \in [ \widetilde M^2]}e\brac{\alpha (y'+ \tilde q^2  \tilde d)}\\ = \E_{y'\in[2M^2]}  \E_{ \tilde d \in [ \widetilde M^2]}f^{x,y, q}_2(x', y'- \tilde q^2 \tilde d) e\brac{\alpha y'} +O\brac{\frac{ \tilde q^2
 \widetilde M^2}{M^2}}.
\end{multline*}
It follows that either $ \widetilde M \gg \delta^{O(1)} M$ or we have
\begin{equation*}
\sum_{x,y}\E_{x'\in[M]} \abs{\E_{y'\in[2M^2]} \E_{ \tilde d \in [ \widetilde M^2]}f^{x,y, q}_2(x', y'- \tilde q^2 \tilde d) e\brac{\alpha y'}} \gg \delta^{O(1)}N_1N_2.
\end{equation*}
Applying the triangle inequality gives
\begin{equation*}
\E_{x' \in [M]} \E_{y' \in [2M^2]}\sum_{x,y} \abs{ \E_{ \tilde d \in [ \widetilde M^2]}f^{x,y, q}_2(x', y'- \tilde q^2 \tilde d) } \gg \delta^{O(1)}N_1N_2.
\end{equation*}
Taking a maximum over $x'$ and $y'$, then expanding the definition of each $f_2^{x,y,q}$ and changing variables, we have
$$
\sum_{x,y} \abs{\E_{ \tilde d \in [ \widetilde M^2]} f_2(x, y-q^2 \tilde q^2 \tilde d) } \gg \delta^{O(1)} N_1N_2.
$$
An application of Cauchy--Schwarz to double the $ \tilde d $ variables then yields
\begin{multline*}
\delta^{O(1)}N_1N_2 \ll \sum_{x,y} \E_{ \tilde d_1,  \tilde d_2 \in [ \widetilde M^2]} f_2(x, y-q^2 \tilde q^2 \tilde d_1)\overline{f_2(x, y-q^2 \tilde q^2 \tilde d_2) }\\
\leq  \sum_{x,y}\abs{ \E_{ \tilde d_1, \tilde d_2 \in [ \widetilde M^2]}f_2\brac{x, y+q^2 \tilde q^2(\tilde d_1- \tilde d_2)}}
\end{multline*}
This gives \eqref{eq:convolved-inverse}, on recalling the definition \eqref{fejer} of the Fej\'er kernel $\mu_H$.
\end{proof}
We end this section by deriving the inverse theorem claimed in our introduction (Theorem \ref{thm:inverse}), where there is no occurrence of the Fej\'er kernel $\mu_N$ in the counting operator.
\begin{proof}[Proof of Theorem \ref{thm:inverse}]
Set $M := \ceil{2\delta^{-1} N}$. Due to the support of the $f_i$, if 
$$
f_0(x,y)f_1(x+d,y)f_2(x,y+d^2) \neq 0
$$
then $x \in [N]$, $y \in [N^2]$ and $|d| < N$. It follows that
\begin{multline*}
\abs{f_0(x,y)f_1(x+d,y)f_2(x,y+d^2) - f_0(x,y)f_1(x+d,y)f_2(x,y+d^2)M\mu_M(d)}\\\leq \tfrac{\delta}{2}1_{[N]}(x)1_{[N^2]}(y).
\end{multline*}
Thus the hypothesis \eqref{eq:large-count} implies that
$$
\abs{\Lambda_M(f_0, f_1, f_2)} \gg \delta \tfrac{N^4}{M^4} \gg \delta^{O(1)}.
$$

Applying Theorem \ref{thm:inv-thm-1} and Theorem \ref{thm:inv-thm-2}, either $N \ll \delta^{-O(1)}$ or there exist $\alpha, \beta \in \T$ and $q_1, q_2 \ll \delta^{-O(1)}$ such that $\norm{q_1\alpha} \ll \delta^{-O(1)}/N$ and $\norm{q_2\beta} \ll \delta^{-O(1)}/N^2$, as well as \eqref{eq:inverse-eq} holds. The theorem follows on taking $q:= q_1q_2$.
\end{proof}

\section{Energy increment}\label{sec:energy}
We are now in a position to run an energy increment argument in order to obtain a modified counting operator that is close to our original count: see our outline in \S\ref{sec:outline} for more on this. Green and Tao \cite{GreenTaoNewIa} call the argument underlying the following result a \emph{local Koopman-von Neuman theorem}.
\begin{theorem}[Energy increment to a subprogression]\label{thm:energy-1}
Let $0<\eps \leq 1/2$ and let $f_0, f_1, f_2 : \Z^2 \to \C$ be 1-bounded functions, each with support contained in $[N_1]\times [N_2]$ where $N_1 \geq N_2^{1/2}$.
Then either $N_2 \leq \exp\brac{\eps^{-O(1)}}$, or there exist $q \leq \exp\brac{\eps^{-O(1)}}$ and $M \geq N_2^{1/2}/\exp\brac{\eps^{-O(1)}}$ such that 
\begin{multline}\label{eq:irregular-pair}
\biggabs{\sum_{x,y,d}\mu_{\eps M}(d)f_0(x,y)f_1(x+qd,y)\Bigsqbrac{f_2\brac{x, y+q^2 d^2}- \sum_{\tilde d}\mu_{M^2}(\tilde d)f_2\brac{x, y+q^2\brac{\tilde d+ d^2}}}}\\
 \leq \eps N_1N_2.
\end{multline}
Moreover, we may choose $M$ sufficiently small to ensure that $qM \leq \eps N_2^{1/2}$. Here $\mu_{\eps M}$ and $\mu_{M^2}$ denote Fej\'er kernels, as defined in \eqref{fejer}.
\end{theorem}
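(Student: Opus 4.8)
The plan is to run an energy increment argument of \emph{local Koopman--von Neumann} type, in the spirit of \cite{GreenSzemeredi, GreenTaoNewIa}, feeding the localised inverse theorem (Theorem \ref{thm:large-asym-count}) into the increment and iterating. For positive integers $q,M$ with $qM \leq \eps N_2^{1/2}$ write $\Pi_{q,M}$ for convolution in the second variable against the dilated Fej\'er kernel, $\Pi_{q,M}f_2(x,y) := \sum_{\tilde d}\mu_{M^2}(\tilde d)f_2(x,y+q^2\tilde d)$, so that the quantity inside \eqref{eq:irregular-pair} is precisely $(f_2-\Pi_{q,M}f_2)(x,y+q^2d^2)$. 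I would track the normalised energy $\mathcal{E}(q,M) := \frac{1}{N_1N_2}\sum_{x,y}\bigabs{\Pi_{q,M}f_2(x,y)}^2$, which lies in $[0,O(1)]$ because $\Pi_{q,M}f_2$ is $1$-bounded and, since $q^2M^2\leq \eps^2N_2\leq N_2$, is supported in a box of size $O(N_1N_2)$. Two facts will be used repeatedly: being convolution with a symmetric, positive-definite probability measure, $\Pi_{q,M}$ is a self-adjoint $\ell^2$-contraction, and it satisfies $\ang{\Pi_{q,M}\phi,\phi}\geq\norm{\Pi_{q,M}\phi}_2^2$ for every $\phi$ (its Fourier multiplier lies in $[0,1]$).

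The iteration would start from $q_1:=1$, $M_1:=\bigfloor{\eps N_2^{1/2}}$ (so $q_1M_1\leq\eps N_2^{1/2}$, and $M_1\geq\eps^{-1}$ unless $N_2\leq\eps^{-2}$, in which case we are already in the escape clause). Given $(q_n,M_n)$ with $q_nM_n\leq\eps N_2^{1/2}$, first test whether \eqref{eq:irregular-pair} holds with $q=q_n$, $M=M_n$; if it does, we are done. If not, then with $g_n:=f_2-\Pi_{q_n,M_n}f_2$ the quantity $\bigabs{\sum_{x,y,d}\mu_{\eps M_n}(d)f_0(x,y)f_1(x+q_nd,y)g_n(x,y+q_n^2d^2)}$ exceeds $\eps N_1N_2$. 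I would apply Theorem \ref{thm:large-asym-count} to the $1$-bounded function $\tfrac12 g_n$ with scale parameter $\eps M_n$ and modulus $q_n$, on the enlarged box $[-2N_1,2N_1]\times[-2N_2,2N_2]$ (which contains $\supp g_n$, has side lengths in the required relation, and satisfies $q_n(\eps M_n)\leq\eps^2N_2^{1/2}\leq(2N_2)^{1/2}$). This produces $\tilde q_n\ll\eps^{-O(1)}$ such that for \emph{any} $\widetilde M$, either $\widetilde M\gg\eps^{O(1)}M_n$ or $\norm{\Pi_{q_n\tilde q_n,\widetilde M}g_n}_1\gg\eps^{O(1)}N_1N_2$. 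I would then put $q_{n+1}:=q_n\tilde q_n$ and choose $M_{n+1}$ to be a fixed small power of $\eps$ times $M_n$: small enough that the first alternative fails (hence the $\ell^1$-bound holds), small enough for the idempotency estimate below, and small enough that $q_{n+1}M_{n+1}\leq\eps N_2^{1/2}$ — all compatible because $\tilde q_n$ is only a fixed power of $\eps^{-1}$.

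Converting this $\ell^1$-bound into an energy increment is the technical heart. Writing $h_n:=\Pi_{q_n,M_n}f_2$, so $g_n=f_2-h_n$ and $\Pi_{q_{n+1},M_{n+1}}g_n=\Pi_{q_{n+1},M_{n+1}}f_2-\Pi_{q_{n+1},M_{n+1}}h_n$, the key estimate is the approximate idempotency $\norm{\Pi_{q_{n+1},M_{n+1}}h_n-h_n}_2^2\ll(\tilde q_n^2M_{n+1}^2/M_n^2)^2N_1N_2$, which follows from Young's inequality together with the elementary modulus-of-continuity bound $\norm{\tau_t\nu-\nu}_1\ll|t|/M_n^2$ (where $\tau_t$ is translation by $t$) for the triangle-weight measure $\nu$ defining $\Pi_{q_n,M_n}$ — legitimate since the shifts appearing in $\Pi_{q_{n+1},M_{n+1}}$ are multiples of $q_n^2$. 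With $M_{n+1}$ chosen so that the right-hand side is $\leq\eps^{O(1)}N_1N_2$ for a large enough exponent, Cauchy--Schwarz over the $O(N_1N_2)$-sized support upgrades $\norm{\Pi_{q_{n+1},M_{n+1}}g_n}_1\gg\eps^{O(1)}N_1N_2$ to $\norm{\Pi_{q_{n+1},M_{n+1}}f_2-h_n}_2^2\gg\eps^{O(1)}N_1N_2$; then the expansion $\norm{\Pi_{q_{n+1},M_{n+1}}f_2}_2^2=\norm{h_n}_2^2+\norm{\Pi_{q_{n+1},M_{n+1}}f_2-h_n}_2^2+2\Re\ang{h_n,\Pi_{q_{n+1},M_{n+1}}f_2-h_n}$, whose cross term is $\geq 2\Re\ang{\Pi_{q_{n+1},M_{n+1}}h_n-h_n,f_2}\geq-\eps^{O(1)}N_1N_2$ (using self-adjointness and $\ang{h_n,f_2}\geq\norm{h_n}_2^2$), yields $\mathcal{E}(q_{n+1},M_{n+1})\geq\mathcal{E}(q_n,M_n)+\eps^{O(1)}$.

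Since $0\leq\mathcal{E}\leq O(1)$, the iteration terminates after at most $n\leq\eps^{-O(1)}$ steps, so \eqref{eq:irregular-pair} must eventually hold; at that point $q_n\leq\prod_{j<n}\tilde q_j\leq(\eps^{-O(1)})^{\eps^{-O(1)}}=\exp(\eps^{-O(1)})$, each step has multiplied the scale by a factor $\gg\eps^{O(1)}$ so that $M_n\geq M_1\exp(-\eps^{-O(1)})\geq N_2^{1/2}/\exp(\eps^{-O(1)})$, and $q_nM_n\leq\eps N_2^{1/2}$ by construction (the ``moreover'' clause). Running the iteration is legitimate as long as every $M_n$ stays above $\eps^{-O(1)}$, which happens precisely when $N_2^{1/2}/\exp(\eps^{-O(1)})\geq\eps^{-O(1)}$; otherwise $N_2\leq\exp(\eps^{-O(1)})$ and we are in the first case of the theorem. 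I expect the main obstacle to be exactly this quantitative bookkeeping: placing $M_{n+1}$ in a window (below $\eps^{O(1)}M_n$ for Theorem \ref{thm:large-asym-count} to fire, above $\eps^{-O(1)}$, small enough for approximate idempotency, compatible with $q_{n+1}M_{n+1}\leq\eps N_2^{1/2}$) and then checking that after $\eps^{-O(1)}$ steps the scale has been divided only by $\exp(\eps^{-O(1)})$ and the modulus has grown only to $\exp(\eps^{-O(1)})$ — which works precisely because each loss $\tilde q_n$ is polynomial, not tower-type, in $\eps^{-1}$.
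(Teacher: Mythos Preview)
Your proposal is correct and follows essentially the same energy-increment iteration as the paper's proof: start at $q=1$, $M=\lfloor\eps N_2^{1/2}\rfloor$, invoke Theorem~\ref{thm:large-asym-count} whenever \eqref{eq:irregular-pair} fails, and convert the resulting $\ell^1$-bound into an increment of the energy $\sum_x\norm{f_2^x*\mu_{q^2,M^2}}_2^2$, terminating after $\eps^{-O(1)}$ steps. The only notable difference is in how the cross term in the Pythagorean expansion is controlled: the paper bounds $\sup_\alpha\widehat{\mu}_{q^2,M^2}(\alpha)\bigl[\widehat{\mu}_{q^2,M^2}(\alpha)-\widehat{\mu}_{q^2\tilde q^2,\widetilde M^2}(\alpha)\bigr]$ directly on the Fourier side, whereas you combine the approximate idempotency $\norm{\Pi_{q_{n+1},M_{n+1}}h_n-h_n}_2$ with the positive-definiteness inequality $\ang{h_n,f_2}\geq\norm{h_n}_2^2$; both arguments exploit the same fact that the Fej\'er multiplier lies in $[0,1]$, and both give polynomial losses.
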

\begin{proof}
We perform an iterative procedure. At stage 0, we take $q = q_0 := 1$ and $M = M_0 := \lfloor\eps N_2^{1/2}\rfloor$.  Suppose that at stage $n$ of our iteration we have a positive integers $q = q_n \leq \eps^{-O(n)}$ and $M=M_n \in[ \eps^{O(n+1)} N_2^{1/2}, \eps N_2^{1/2}/q]$ that satisfy the energy lower bound
\begin{equation}\label{eq:energy-bd}
\sum_{x,y} \biggabs{\sum_{\tilde d} f_2(x, y+q^2\tilde d) \mu_{ M^2}(\tilde d)}^2 \gg n\eps^{O(1)}N_1N_2.
\end{equation}
Given this, we query whether \eqref{eq:irregular-pair} holds or not. \textbf{If \eqref{eq:irregular-pair} holds, then the process terminates.} Therefore suppose that \eqref{eq:irregular-pair} does not hold. 

Write 
$$
g_2(x,y) := f_2\brac{x, y}- \sum_{\tilde d}\mu_{M^2}(\tilde d)f_2\brac{x, y+q^2\tilde d},
$$
which is a function with support contained in $[- N_1, N_1]\times [- N_2, N_2]$, since $q^2M^2 \leq  N_2$. Applying the inverse theorem for our asymmetric counting operator (Theorem \ref{thm:large-asym-count}), there exists a positive integer $\tilde q$ with $\tilde q \leq \eps^{-O(1)}$ such that for any positive integer $\widetilde M$, either $\widetilde M \geq \eps^{O(1)} M$ or
\begin{equation}\label{eq:convolved-inverse-1}
\sum_{x,y} \abs{\sum_{\tilde d}\mu_{\widetilde M^2}(\tilde d) g_2(x, y+q^2\tilde q^2\tilde d)}  \gg \eps^{O(1)} N_1N_2.
\end{equation}
Provided that it is not the case that $M \ll \eps^{-O(1)}$, we can find a positive integer $\widetilde M \geq \eps^{O(1)} M$ sufficiently small to guarantee \eqref{eq:convolved-inverse-1}. \textbf{If $M \ll \eps^{-O(1)}$ then the process terminates}.

On writing $f_2^x(y):= f_2(x,y)$ and $\mu_{q,H}(y) := 1_{q\cdot \Z}(y) \mu_H(y/q)$, we may expand the definition of $g_2$ to re-formulate \eqref{eq:convolved-inverse-1} as\footnote{Here we have normalised the $L^1(\Z)$ norm as in \eqref{eq:Lp-norm}.}
\begin{equation*}
\sum_x\norm{f_2^x*\mu_{q^2\tilde q^2, \widetilde M^2} - f_2^x*\mu_{q^2,  M^2}*\mu_{q^2\tilde q^2, \widetilde M^2}}_1  \gg \eps^{O(1)} N_1N_2.
\end{equation*}
One can check that we have the bound
$$
\norm{\mu_{q^2,  M^2}*\mu_{q^2\tilde q^2, \widetilde M^2} - \mu_{q^2,  M^2}}_1 \ll \frac{\tilde q^2 \widetilde M^2}{M^2} + \frac{\tilde q^4 \widetilde M^4}{M^4}.
$$
Decreasing $\widetilde M$ if necessary, but still retaining a lower bound of the form $\widetilde M \geq \eps^{O(1)} M$, we have
\begin{equation*}
\sum_x\norm{f_2^x*\mu_{q^2\tilde q^2, \widetilde M^2} - f_2^x*\mu_{q^2,  M^2}}_1  \gg \eps^{O(1)} N_1N_2.
\end{equation*}
Applying the Cauchy--Schwarz inequality then gives
\begin{equation*}
\sum_x\norm{f_2^x*\mu_{q^2\tilde q^2, \widetilde M^2} - f_2^x*\mu_{q^2,  M^2}}_2^2  \gg \eps^{O(1)} N_1N_2.
\end{equation*}
We wish to replace the above, which measures the energy of a difference of two functions $f-g$, with the difference of their energies $\norm{f}_2^2-\norm{g}_2^2$. From Pythagoreans' theorem, we know that to make this substitution we must show that $f-g$ and $g$ are (approximately) orthogonal. More concretely, writing $L^2$-norms in terms of inner products, we have
\begin{multline}\label{eq:pythag}
 \bignorm{f_2^x*\mu_{q^2\tilde q^2, \widetilde M^2}}_2^2- \bignorm{f_2^x*\mu_{q^2,  M^2}}_2^2 = \bignorm{f_2^x*\mu_{q^2\tilde q^2, \widetilde M^2} - f_2^x*\mu_{q^2,  M^2}}_2^2\\ - 2\Re\bigang{f_2^x*\mu_{q^2,  M^2}, f_2^x*\bigbrac{\mu_{q^2,  M^2}-\mu_{q^2\tilde q^2, \widetilde M^2}}}.
\end{multline}
So we wish to show that the latter inner product is small.

Suppose otherwise, so that the inner product in \eqref{eq:pythag} is at least
$\delta \norm{f_2^x}_2^2$. Then Parseval's identity and the convolution identity give
\begin{multline*}
\delta \norm{f_2^x}_2^2 \leq \Re\bigang{f_2^x*\mu_{q^2,  M^2}, f_2^x*\bigbrac{\mu_{q^2,  M^2}-\mu_{q^2\tilde q^2, \widetilde M^2}}}\\ = \int_{\T}\bigabs{\widehat{f}^x_2(\alpha)}^2 \widehat{\mu}_{q^2, M^2}(\alpha)\sqbrac{\widehat{\mu}_{q^2,  M^2}(\alpha)-\widehat{\mu}_{q^2\tilde q^2, \widetilde M^2}(\alpha)}\intd\alpha\\
\leq \norm{f_2^x}_2^2\sup_\alpha\brac{\widehat{\mu}_{q^2,  M^2}(\alpha)\sqbrac{\widehat{\mu}_{q^2,  M^2}(\alpha)-\widehat{\mu}_{q^2\tilde q^2, \widetilde M^2}(\alpha)}}.
\end{multline*}
By non-negativity of $\widehat\mu_{q^2, M^2}$, there exists $\alpha \in \T$ such that both of the following hold
\begin{equation*}
\widehat{\mu}_{q^2,  M^2}(\alpha) \geq \delta \quad \text{and}\quad \widehat{\mu}_{q^2,  M^2}(\alpha)-\widehat{\mu}_{q^2\tilde q^2, \widetilde M^2}(\alpha) \geq \delta.
\end{equation*}
Yet, we know from summing the geometric series that if
$$
M^{-4}\abs{\hat{1}_{[M^2]}(q^2\alpha)}^2 = \widehat{\mu}_{q^2,  M^2}(\alpha) \geq \delta,
$$
then $\norm{q^2 \alpha}_{\T} \leq \delta^{-1/2}/M^2$. Yet, for such values of $\alpha$, we have
$$
\widehat{\mu}_{q^2\tilde q^2, \widetilde M^2}(\alpha) = 
\bigabs{\E_{\tilde d \in [\widetilde M^{2} ]} e(\alpha q^2 \tilde q^2 \tilde d)}^2 \geq  \bigbrac{1 - \frac{2\pi\tilde q^2 \widetilde M^2}{\delta^{1/2}M^2}}^2 \geq 1 - \frac{4\pi\tilde q^2 \widetilde M^2}{\delta^{1/2}M^2}.
$$
Thus, taking $\delta =  (5\pi\tilde q^2 \widetilde M^2/M^2)^{2/3}$ gives the contradiction that such values of $\alpha$ satisfy $\widehat{\mu}_{q^2\tilde q^2, \widetilde M^2}(\alpha) > 1-\delta$ and $\widehat{\mu}_{q^2, M^2}(\alpha) > 1$.

We have therefore deduced that  
$$
\Re\ang{f_2^x*\mu_{q^2,  M^2}, f_2^x*\brac{\mu_{q^2,  M^2}-\mu_{q^2\tilde q^2, \widetilde M^2}}} \ll \brac{\tfrac{\tilde q \widetilde M}{M}}^{4/3} \norm{f_2^x}_2^2.
$$

Putting all of the above together, we have that 
 \begin{equation*}
\sum_x\brac{\bignorm{f_2^x*\mu_{q^2\tilde q^2, \widetilde M^2}}_2^2 -\norm{ f_2^x*\mu_{q^2,  M^2}}_2^2}  \geq \Omega\brac{ \eps^{O(1)} N_1N_2} - O\brac{\brac{\tfrac{\tilde q \widetilde M}{M}}^{4/3} N_1N_2}.
\end{equation*}
Decreasing $\widetilde M$ if necessary, but still retaining a lower bound of the form $\widetilde M \geq \eps^{O(1)} M$, we obtain the energy increment
$$
\sum_x\bignorm{f_2^x*\mu_{q^2\tilde q^2, \widetilde M^2}}_2^2 \geq \sum_x\bignorm{f_2^x*\mu_{q^2,  M^2}}_2^2 + \Omega\brac{\eps^{O(1)} N_1N_2}.
$$
With this bound in hand, our inductive hypotheses are satisfied (see \eqref{eq:energy-bd}), and our procedure iterates to the next stage.

Since the energy \eqref{eq:energy-bd} is bounded above by $N_1N_2$, our procedure must terminate at some stage $n \ll \eps^{O(1)}$, yielding either $M = M_n \ll \eps^{-O(1)}$ or \eqref{eq:irregular-pair}. Unpacking the rate at which $q = q_n$ and $M = M_n$ change, the former possibility leads to $N \leq \exp(\eps^{-O(1)})$, whilst the latter gives our other desired outcome.
\end{proof}
\section{Obtaining a popular difference}\label{sec:pop-com-diff}
Finally we prove the remaining results claimed in our introduction \S\ref{sec:intro}.
\begin{theorem}[Existence of a popular difference]\label{thm:pop-com-diff}
Let $A \subset [N_1]\times[N_2]$ with $|A| \geq \delta N_1N_2$ and $N_1 \geq N_2^{1/2}$. Given $0<\eps \leq 1/2$, either $N_2 \leq \exp\brac{\eps^{-O(1)}}$ or there exist $q \leq \exp\brac{\eps^{-O(1)}}$ and $M \geq N_2^{1/2}/\exp\brac{\eps^{-O(1)}}$ such that 
$$
\sum_{x,y, d}\mu_{ M}(d)1_A(x,y)1_A(x+qd,y)1_A(x, y+q^2 d^2)\geq \brac{\delta^3-\eps} N_1N_2.
$$
\end{theorem}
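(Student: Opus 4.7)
The plan is to combine Theorem~\ref{thm:energy-1} with a weighted corners-type inequality in order to compare the target count with $\delta^3 N_1 N_2$. First, apply Theorem~\ref{thm:energy-1} to $A$ (taking $f_0 = f_1 = f_2 = 1_A$) with tolerance $\eps/3$, obtaining $q \leq \exp(\eps^{-O(1)})$ and a parameter $\widetilde M$ with $\widetilde M \geq N_2^{1/2}/\exp(\eps^{-O(1)})$ and $q\widetilde M \leq (\eps/3) N_2^{1/2}$, such that the irregular-pair bound~\eqref{eq:irregular-pair} holds. Taking $M := (\eps/3)\widetilde M$ so that $\mu_M$ is the Fejér kernel appearing in the target count, the irregular-pair bound says that this count lies within $(\eps/3) N_1 N_2$ of
\begin{equation*}
\text{Count}_2 := \sum_{x, y, d, \tilde d} \mu_M(d)\, \mu_{\widetilde M^2}(\tilde d)\, 1_A(x, y)\, 1_A(x+qd, y)\, 1_A(x, y+q^2(\tilde d + d^2)).
\end{equation*}

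Next, decouple $d^2$ from $\tilde d$. Since $|d| \leq M$ forces $|d^2| \leq M^2 = (\eps/3)^2 \widetilde M^2$, the substitution $\tilde d \mapsto \tilde d - d^2$ perturbs $\mu_{\widetilde M^2}$ by $O(\eps^2)$ in $L^1$-norm, introducing an additional error of $O(\eps^2) N_1 N_2 \leq (\eps/3) N_1 N_2$ (for $\eps$ small enough). It therefore suffices to show
\begin{equation*}
\text{Count}_3 := \sum_{x, y} 1_A(x, y) \cdot a(x, y) \cdot b(x, y) \geq \delta^3 N_1 N_2,
\end{equation*}
where $a(x, y) := \sum_d \mu_M(d) 1_A(x+qd, y)$ and $b(x, y) := \sum_{\tilde d} \mu_{\widetilde M^2}(\tilde d) 1_A(x, y+q^2 \tilde d)$ are Fejér convolutions of $1_A$ in the horizontal and vertical directions, respectively.

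The lower bound on $\text{Count}_3$ is the main obstacle. After the change of variables $x' = x+qd$ and $y' = y+q^2 \tilde d$, the quantity $\text{Count}_3$ is a probability-weighted count of L-configurations $(x,y), (x',y), (x,y')$ in $A$. Under uniform weights on $x, x' \in [N_1]$ and $y, y' \in [N_2]$, the bound $\sum 1_A(x, y) 1_A(x', y) 1_A(x, y') \geq |A|^3/(N_1 N_2)$ is classical: it follows from the identity $\sum_{(x, y) \in A} |A_y| |A^x| \geq |A|^3/(N_1 N_2)$ via Jensen's inequality applied to the convex function $t \log t$. I would appeal to the non-obvious application of Hölder's inequality flagged in Section~\ref{sec:outline}, namely $\E 1_A(X_1, Y_1) 1_A(X_1, Y_2) 1_A(X_2, Y_1) \geq (\E 1_A)^3$ for appropriate joint distributions on $(X_1, X_2, Y_1, Y_2)$, and instantiate it with the Fejér-weighted distribution in order to conclude $\text{Count}_3 \geq \delta^3 N_1 N_2$. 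Chaining the three approximations then yields the desired popular count $\geq (\delta^3 - \eps) N_1 N_2$.
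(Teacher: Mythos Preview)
Your outline matches the paper's proof: apply Theorem~\ref{thm:energy-1}, use the Lipschitz bound on $\mu_{\widetilde M^2}$ to decouple $d^2$ from $\tilde d$, and then lower-bound the resulting ``corners'' count via H\"older/Blakley--Roy. The one genuine gap is in the final step. The change of variables $x'=x+qd$, $y'=y+q^2\tilde d$ that you suggest does \emph{not} set up Blakley--Roy: the resulting weight $\mu_M((x'-x)/q)\,\mu_{\widetilde M^2}((y'-y)/q^2)$ is translation-invariant but not a product measure in $(x,x')$ or $(y,y')$, so there is no ``F\'ejer-weighted joint distribution'' on $(X_1,X_2,Y_1,Y_2)$ with i.i.d.\ marginals to instantiate. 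The paper instead expands each F\'ejer kernel as a self-convolution $\mu_M=\lfloor M\rfloor^{-2}1_{[M]}*1_{-[M]}$ and shifts $(x,y)$ to obtain
\[
\sum_{x,y}\E_{d_1,d_2\in[M]}\E_{\tilde d_1,\tilde d_2\in[\widetilde M^2]} 1_A(x+qd_1,y+q^2\tilde d_1)\,1_A(x+qd_2,y+q^2\tilde d_1)\,1_A(x+qd_1,y+q^2\tilde d_2),
\]
which for each fixed $(x,y)$ \emph{is} an honest product average, so Blakley--Roy applies pointwise to give $\geq\bigl(\E_{d,\tilde d}1_A(x+qd,y+q^2\tilde d)\bigr)^3$. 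A second H\"older over $(x,y)$ then requires bounding the support of this cube by a box of size $(1+O(\eps))N_1\times(1+O(\eps))N_2$ (this is where $q\widetilde M\leq\eps N_2^{1/2}$ is used), yielding only $\text{Count}_3\geq(1-O(\eps))\delta^3 N_1N_2$ rather than the exact $\delta^3 N_1N_2$ you claim; the loss is harmless after rescaling $\eps$, but it is not free.
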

\begin{proof}
Let us apply our energy increment argument to $1_A$, so we take $f_0= f_1 = f_2 = 1_A$ in Theorem \ref{thm:energy-1}. This tells us that we either have $N_2 \leq \exp\brac{\eps^{-O(1)}}$ or there exist $q \leq \exp\brac{\eps^{-O(1)}}$ and $M \geq N_2^{1/2}/\exp\brac{\eps^{-O(1)}}$ such that $qM \leq \eps N_2^{1/2}$ and
\begin{multline*}
\biggl|\sum_{x,y,d}\mu_{\eps M}(d)1_A(x,y)1_A(x+qd,y)\\
\biggsqbrac{1_A\brac{x, y+q^2 d^2}- \sum_{\tilde d}\mu_{M^2}(\tilde d)1_A\brac{x, y+q^2\brac{ d^2+\tilde d}}}\biggr|  \leq \eps N_1N_2.
\end{multline*}

Focusing on the second counting operator, we have
\begin{multline*}
\sum_{x,y,d}\mu_{\eps M}(d)1_A(x,y)1_A(x+qd,y)\sum_{\tilde d}\mu_{M^2}\bigbrac{\tilde d}1_A\brac{x, y+q^2\brac{d^2+ \tilde d}}\\
= \sum_{x,y,d}\mu_{\eps M}(d)1_A(x,y)1_A(x+qd,y)\sum_{\tilde d}\mu_{M^2}\brac{\tilde d-d^2}1_A\brac{x, y+q^2\tilde d}.
\end{multline*}
The Lipschitz properties of $\mu_{M^2}$ give that
$$
\abs{\mu_{M^2}\brac{\tilde d-d^2} - \mu_{M^2}\bigbrac{\tilde d}} \ll d^2M^{-4}.
$$
Hence our restriction to $|d|< \eps M$ gives  
\begin{multline*}
\sum_{x,y,d}\mu_{\eps M}(d)1_A(x,y)1_A(x+qd,y)\sum_{\tilde d}\mu_{M^2}\bigbrac{\tilde d}1_A\brac{x, y+q^2\brac{d^2+ \tilde d}}\\
\geq \sum_{x,y,d}\mu_{\eps M}(d)1_A(x,y)1_A(x+qd,y)\sum_{\tilde d}\mu_{M^2}\bigbrac{\tilde d}1_A\brac{x, y+q^2\tilde d} - O\brac{\eps N_1N_2}.
\end{multline*}

Expanding the definition \eqref{fejer} of the Fej\'er kernel, then changing variables,  we have
\begin{multline*}
\sum_{x,y,d}\mu_{\eps M}(d)1_A(x,y)1_A(x+qd,y)\sum_{\tilde d}\mu_{M^2}\bigbrac{\tilde d}1_A\brac{x, y+q^2\tilde d}\\
 \geq \sum_{x,y}\E_{d_1, d_2 \in [\eps M]}\E_{\tilde d_1, \tilde d_2\in [M^2]}1_A\bigbrac{x+qd_1,y+q^2 \tilde d_1}\\ 1_A\bigbrac{x+qd_2,y+q^2 \tilde d_1}1_A\bigbrac{x+qd_1, y+q^2 \tilde d_2}.
\end{multline*}

By an application of H\"older's inequality, sometimes termed the Blakley-Roy inequality\footnote{See this discussion \url{https://mathoverflow.net/questions/189222}.}, we have
\begin{multline*}
\E_{d_1, d_2 \in [\eps M]}\E_{\tilde d_1, \tilde d_2\in [M^2]}1_A(x+qd_1,y+q^2 \tilde d_1)1_A(x+qd_2,y+q^2 \tilde d_1)1_A(x+qd_1, y+q^2 \tilde d_2)\\ \geq \brac{\E_{d \in [\eps M]}\E_{\tilde d\in [M^2]}1_A(x+qd,y+q^2 \tilde d)}^3.
\end{multline*}
If the sum inside the cube is non-zero, then (because $A \subset [N_1]\times [N_2]$) we have 
$$
(x,y) \in (-q\eps M, N_1) \times (-q^2M^2, N_2) \subset (-\eps N_1, N_1)\times (-\eps N_2, N_2).
$$
So, again by H\"older's inequality,
\begin{multline*}
\sum_{x,y}\brac{\E_{d \in [\eps M]}\E_{\tilde d\in [M^2]}1_A(x+qd,y+q^2 \tilde d)}^3\\
\geq N_1^{-2}N_2^{-2}(1+\eps)^{-4} \brac{\sum_{x,y}\E_{d \in [M]}\E_{\tilde d\in [M^2]}1_A(x+qd,y+q^2 \tilde d)}^3\\
\geq  \delta^3N_1N_2(1-\eps)^4.
\end{multline*}

Putting everything together gives  
$$
\sum_{x,y,d}\mu_{\eps M}(d)1_A(x,y)1_A(x+qd,y)1_A(x, y+q^2 d^2) \geq \brac{\delta^3 - O(\eps)}N_1N_2,
$$
as required.
\end{proof}

Taking $N_1 = N_2$ in Theorem \ref{thm:pop-com-diff} yields Theorem \ref{thm:2D-pop-diff}. Taking $N_2 = N_1^2$ yields the following, which is perhaps more natural.
\begin{corollary}
Let $A \subset [N]\times [N^2]$ with $|A| \geq \delta N^3$ and let $0<\eps \leq 1/2$.
Either $N \leq \exp\brac{\eps^{-O(1)}}$ or there exists $d \neq 0$ such that
$$
\hash\set{(x,y) \in A : (x+d, y), (x,y+d^2) \in A} \geq \brac{\delta^3-\eps} N^3.
$$
\end{corollary}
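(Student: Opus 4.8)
The plan is to obtain this as a direct specialisation of Theorem~\ref{thm:pop-com-diff}, followed by a short pigeonholing step to pass from the $\mu_M$-weighted count to a single nonzero difference. First I would apply Theorem~\ref{thm:pop-com-diff} with $N_1 = N$, $N_2 = N^2$, and with $\eps$ replaced by $\eps/2$: the hypothesis $N_1 \geq N_2^{1/2}$ holds with equality, and $|A| \geq \delta N^3 = \delta N_1 N_2$. This gives that either $N^2 \leq \exp((\eps/2)^{-O(1)})$, in which case $N \leq \exp(\eps^{-O(1)})$ after adjusting the implied constant and we are in the first alternative, or there exist $q \leq \exp(\eps^{-O(1)})$ and $M \geq N/\exp(\eps^{-O(1)})$ with
\[
\sum_{x,y,d}\mu_M(d)1_A(x,y)1_A(x+qd,y)1_A(x,y+q^2d^2) \geq \brac{\delta^3-\tfrac{\eps}{2}}N^3.
\]

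Next I would discard the degenerate term $d = 0$. Since $\mu_M(0) = 1/M$ and $|A| \leq N^3$, this term contributes at most $N^3/M \leq N^2\exp(\eps^{-O(1)})$. If $N < 2\eps^{-1}\exp(\eps^{-O(1)})$ then, using $\eps^{-1} \leq \exp(O(\log\eps^{-1})) \leq \exp(\eps^{-O(1)})$, we get $N \leq \exp(\eps^{-O(1)})$ and are once more in the first alternative; otherwise $N^3/M \leq \tfrac{\eps}{2}N^3$, and therefore
\[
\sum_{x,y}\sum_{d \neq 0}\mu_M(d)1_A(x,y)1_A(x+qd,y)1_A(x,y+q^2d^2) \geq \brac{\delta^3-\eps}N^3.
\]
Because $\mu_M$ is a probability measure on $\Z$ we have $\sum_{d \neq 0}\mu_M(d) \leq 1$, so the nonnegative inner quantities $a_d := \sum_{x,y}1_A(x,y)1_A(x+qd,y)1_A(x,y+q^2d^2)$ satisfy $\brac{\delta^3-\eps}N^3 \leq \sum_{d\neq0}\mu_M(d)a_d \leq (\max_{d\neq0}a_d)\sum_{d\neq0}\mu_M(d) \leq \max_{d\neq0}a_d$, yielding some $d_0 \neq 0$ (necessarily with $\mu_M(d_0)>0$) such that $a_{d_0} \geq \brac{\delta^3-\eps}N^3$. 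Setting $d := qd_0 \neq 0$ and using $q^2 d_0^2 = (qd_0)^2 = d^2$, the quantity $a_{d_0}$ is exactly $\hash\set{(x,y)\in A : (x+d,y),(x,y+d^2)\in A}$, which is the desired conclusion.

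I do not expect a genuine obstacle here: the corollary is essentially a packaging of Theorem~\ref{thm:pop-com-diff}. The only points requiring care are bookkeeping ones, namely that the three sources of an $\exp(\eps^{-O(1)})$ threshold (the output of Theorem~\ref{thm:pop-com-diff}, the small-$N$ case arising from $N < 2\eps^{-1}\exp(\eps^{-O(1)})$, and the need for $N^3/M$ to be negligible against $\eps N^3$) can be amalgamated into a single such threshold, and that the dilation $d \mapsto qd$ appearing in Theorem~\ref{thm:pop-com-diff} is compatible with the unscaled configuration precisely because the second-coordinate shift there is $q^2 d^2 = (qd)^2$, so that no extra dependence on $q$ survives in the final pattern.
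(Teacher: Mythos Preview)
Your proposal is correct and is exactly the approach the paper intends: it states the corollary immediately after Theorem~\ref{thm:pop-com-diff} with only the remark ``Taking $N_2 = N_1^2$ yields the following,'' leaving the routine pigeonholing and removal of the $d=0$ term implicit. You have supplied precisely those details, including the key observation that $q^2 d_0^2 = (qd_0)^2$ so the dilation by $q$ disappears in the final configuration.
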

From this we derive the existence of a one-dimensional popular common difference, which is claimed in Theorem \ref{thm:1D-pop-diff}.
\begin{corollary}[Theorem \ref{thm:1D-pop-diff}, re-stated]
Let $A \subset [N]$ with $|A| \geq \delta N$ and let $0<\eps \leq 1/2$.
Either $N \leq \exp\brac{\eps^{-O(1)}}$ or there exists $d \neq 0$ such that
$$
\hash\set{x \in A : x+d, x+d^2 \in A} \geq \brac{\delta^3-\eps} N.
$$
\end{corollary}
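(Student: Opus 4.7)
The plan is to deduce this one-dimensional corollary from the immediately preceding corollary (the two-dimensional popular difference on the asymmetric grid $[N] \times [N^2]$) by a simple lifting trick. Given $A \subset [N]$ with $|A| \geq \delta N$, set $M := \lceil N^{1/2}\rceil$, so that $M^2 \geq N$ and hence $A \subset [M^2]$. Introduce the lift
\[
\tilde A := \set{(x,y) \in [M]\times [M^2] : x+y \in A}.
\]
The whole point of this construction is the linearity $\phi(x+d,y)-\phi(x,y)=d$ and $\phi(x,y+d^2)-\phi(x,y)=d^2$ for $\phi(x,y) := x+y$: if $(x,y),\,(x+d,y),\,(x,y+d^2)$ all lie in $\tilde A$, then setting $u := x+y$ yields $u,\,u+d,\,u+d^2 \in A$.

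Next I will verify that $\tilde A$ has density essentially $\delta$ in $[M]\times [M^2]$. The fibre above each $u \in [M,M^2]$ under the sum map has exactly $M$ elements, so
\[
|\tilde A| \geq M \cdot |A \cap [M,M^2]| \geq M\bigbrac{|A| - M} \geq M(\delta N - M).
\]
Dividing by $M^3$ and using $N/M^2 = 1 - O(N^{-1/2})$ gives density at least $\delta - O(N^{-1/2})$, which is at least $\delta - \eps/10$ provided $N \geq \exp(\eps^{-O(1)})$ for a suitably chosen constant in the exponent (otherwise the first alternative of the conclusion already holds and we are done).

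Now I apply the preceding corollary (the $[M]\times [M^2]$ popular difference) to $\tilde A$, with $\eps$ replaced by $\eps/10$. If $M \leq \exp((\eps/10)^{-O(1)})$, then $N \leq M^2 \leq \exp(\eps^{-O(1)})$, which is the first alternative. Otherwise, there exists $d \neq 0$ with
\[
K := \hash\set{(x,y) \in \tilde A : (x+d,y),\,(x,y+d^2) \in \tilde A} \;\geq\; \bigbrac{(\delta-\eps/10)^3 - \eps/10}M^3 \;\geq\; (\delta^3 - \eps)M^3.
\]
Pushing down via $(x,y)\mapsto u := x+y$ and writing $S := \set{u \in A : u+d,\, u+d^2 \in A}$, every fibre $\{x : x+y = u,\,x\in[M],\,x+d\in[M]\}$ has at most $M$ elements, hence $K \leq M\cdot |S|$ and therefore
\[
|S| \;\geq\; K/M \;\geq\; (\delta^3 - \eps)M^2 \;\geq\; (\delta^3 - \eps)N,
\]
which is the desired conclusion.

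There is no conceptual obstacle here; the only care required is bookkeeping the small density deficit introduced by the lift and tracking how the bound $N \leq \exp(\eps^{-O(1)})$ transforms under $N \leftrightarrow M = \lceil N^{1/2}\rceil$, but since $\log M \asymp \log N$, the exponential form is preserved (with a worse implicit $O(1)$ exponent). Everything else is an essentially formal consequence of the two-dimensional theorem.
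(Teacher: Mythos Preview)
Your proposal is correct and follows essentially the same approach as the paper: lift $A$ to a two-dimensional set via the sum map $(x,y)\mapsto x+y$, apply the two-dimensional popular-difference result, and push down using that each fibre has size at most $M$. The only cosmetic differences are that the paper works directly on $[N^{1/2}]\times[N]$ and invokes the general Theorem~\ref{thm:pop-com-diff} rather than the intermediate $[N]\times[N^2]$ corollary, and it phrases the push-down as taking a maximum over $x$ instead of bounding fibre sizes; the argument is otherwise identical.
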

\begin{proof}
Define
$$
\tilde A := \set{(x,y) \in [N^{1/2}]\times [N] : x+y \in A}.
$$
Let us bound the size of $\tilde A$. We have:
\begin{equation*}
|\tilde A| \geq \sum_{a \in A \cap (N^{1/2}, N]} \sum_{x\in [N^{1/2}]} 1_{[N]}(a-x) \geq \brac{\delta N - N^{1/2}} N^{1/2} \geq (\delta - \eps) N^{3/2},
\end{equation*}
or else $N \ll \eps^{-O(1)}$.

Applying Theorem \ref{thm:pop-com-diff}, either $N \leq \exp\brac{\eps^{-O(1)}}$ or there exists an integer $d \neq 0$ such that 
\begin{equation*}
 \hash\set{(x,y)\in [N^{1/2}]\times [N] : x+y,\ x+y+d,\ x+y+d^2 \in A} \geq \brac{(\delta-\eps)^3 -\eps} N^{3/2}.
\end{equation*}
Taking a maximum over $x\in [N^{1/2}]$, we obtain some $x_0$ such that
$$
  \hash\set{y\in [N] : x_0+y,\ x_0+y+d,\ x_0+y+d^2 \in A} \geq \brac{(\delta-\eps)^3 -\eps} N.
$$
The result follows on observing that (by the binomial theorem) $(\delta-\eps)^3 \geq \delta^3 - 4\eps$.
\end{proof}
\appendix

\section{Notation}\label{sec:notation}
\subsection{Standard conventions}
We use $\N$ to denote the positive integers.  For real $X \geq 1$, write $[X] = \{ 1,2, \ldots, \floor{X}\}$.  A complex-valued function is \emph{1-bounded} if the modulus of the function does not exceed 1.

We use counting measure on $\Z^d$, so that for $f,g :\Z^d \to \C$ we have
\begin{equation}\label{eq:Lp-norm}
\ang{f,g} := \sum_x f(x)\overline{g(x)}\qquad \text{and}\qquad \norm{f}_{L^p} := \biggbrac{\sum_x |f(x)|^p}^{\recip{p}}.
\end{equation}
Any sum of the form $\sum_x$ is to be interpreted as a sum over $\Z^d$. 
We use Haar probability measure on $\T^d := \R^d/\Z^d$, so that for measurable $F : \T^d \to \C$ we have
$$
\norm{F}_{L^p} := \biggbrac{\int_{\T^d} |F(\alpha)|^p\intd\alpha}^{\recip{p}} = \biggbrac{\int_{[0,1)^d} |F(\alpha)|^p\intd\alpha}^{\recip{p}}
$$
For $\alpha \in \T$ we write $\norm{\alpha}_{\T}$ for the distance to the nearest integer.  

For a finite set $S$ and function $f:S\to\C$, denote the average of $f$ over $S$ by
\[
\E_{s\in S}f(s):=\frac{1}{|S|}\sum_{s\in S}f(s).
\]

Given functions $f,g : G \to \C$ on an additive group with measure $\mu_G$ we define their convolution by 
\begin{equation}\label{convolution}
f*g(x) := \int_G f(x-y) g(y) \intd\mu_G,
\end{equation}
when this makes sense. On the integers $G= \Z$, we take $\mu_G$ to be counting measure.

We define the Fourier transform of $f : \Z^d \to \C$ by 
\begin{equation}\label{Fourier transform}
\hat{f}(\alpha) := \sum_x f(x) e(\alpha \cdot x) \qquad (\alpha \in \T^d),
\end{equation}
again, when this makes sense.  Here $e(\beta)$ stands for $e^{2\pi i \beta}$.

The \emph{difference function} of $f : \Z^d \to \C$ with respect to $h \in \Z^d$ is the function $\Delta_h f : \Z^d \to \C$ given by 
\begin{equation}\label{eq:diff}
\Delta_hf(x) = f(x) \overline{f(x+h)}.
\end{equation}
Iterating, we set
$$
\Delta_{h_1, \dots, h_s} f := \Delta_{h_1} \dots \Delta_{h_s} f.
$$
This allows us to define the \emph{Gowers $U^s$-norm}
\begin{equation}\label{Us def}
\norm{f}_{U^s(\Z^d)} := \brac{\sum_{x, h_1, \dots, h_s\in \Z^d} \Delta_{h_1, \dots, h_s} f(x)}^{1/2^s}.
\end{equation}
When $S \subset \Z^d$ we define the \emph{localised} Gowers $U^s$-norm
\begin{equation}\label{local Us def}
\norm{f}_{U^s(S)} := \norm{f1_S}_{U^s}.
\end{equation}

For a function $f$ and positive-valued function $g$, write $f \ll g$ or $f = O(g)$ if there exists a constant $C$ such that $|f(x)| \le C g(x)$ for all $x$. We write $f = \Omega(g)$ if $f \gg g$.  We sometimes opt for a more explicit approach, using $C$ to denote a large absolute constant, and $c$ to denote a small positive absolute constant.  The values of $C$ and $c$ may change from line to line.

\subsection{Local conventions}
Up to normalisation, all of the above are well-used in the literature. Next we list notation specific to our paper. We have tried to minimise this in order to aid the casual reader.  

For a real parameter $H \geq 1$, we use $\mu_H : \Z \to [0,1]$ to represent the following normalised Fej\'er kernel
\begin{equation}\label{fejer}
\mu_H(h) := \recip{\floor{H}} \brac{1 - \frac{|h|}{\floor{H}}}_+ = \frac{(1_{[H]} * 1_{-[H]} )(h)}{\floor{H}^2}.
\end{equation}
Strictly speaking, ``Fej\'er kernel'' usually refers to the Fourier transform of this function, which is a function on the torus $\T$. We adhere to our unconventional nomenclature since most of our arguments take place in $\Z$.
For a multidimensional vector $h \in \Z^d$ we write
\begin{equation}\label{multidim fejer}
\mu_H(h) = \mu_H(h_1, \dots, h_d) := \mu_H(h_1)\dotsm \mu_H(h_d).
\end{equation}
We observe that this is a probability measure on $\Z^d$ with support in the box $(-H, H)^d$.

Define a \emph{counting operator} on the functions $f_0, f_1, f_2 : \Z^2 \to \C$  by
\begin{equation}\label{counting op}
\Lambda_N(f_0, f_1, f_2) := \E_{x \in [N]}\E_{y \in [N^2]}\sum_{d}\mu_N(d)f_0(x,y)f_1(x+d,y) f_2(x,y+d^2).
\end{equation}
When the $f_i$ are all equal to $f$ we simply write $\Lambda_N(f)$.





\newcommand{\etalchar}[1]{$^{#1}$}

\end{document}